\definecolor{darkgreen}{RGB}{55,138,0}
\definecolor{burntorange}{RGB}{180,85,0}
\definecolor{navyblue}{RGB}{18,40,180}
\definecolor{cyan(process)}{rgb}{0.0, 0.6, 1.0}
\definecolor{blue-violet}{rgb}{0.54, 0.17, 0.89}
\numberwithin{equation}{section}
\theoremstyle{plain}
\newtheorem{theorem}{Theorem}[section]
\newtheorem*{theorem*}{Theorem}
\newtheorem*{definition*}{Definition}
\newtheorem{lemma}[theorem]{Lemma}
\newtheorem{proposition}[theorem]{Proposition}
\theoremstyle{definition}
\newtheorem{definition}[theorem]{Definition}
\newtheorem{example}[theorem]{Example}
\newtheorem{remark}[theorem]{Remark}
\newcommand{\Z}{\mathbb{Z}}
\newcommand{\N}{\mathbb{N}}
\newcommand{\kk}{\Bbbk}
\newcommand{\Id}{\textsl{Id}}
\newcommand{\HH}{\ensuremath{\mathsf{HH}}}
\newcommand{\Ext}{\operatorname{Ext}}
\newcommand{\rad}{\operatorname{rad}}
\newcommand{\Hom}{\operatorname{Hom}}
\newlist{steps}{enumerate}{1}
\setlist[steps, 1]{label = Step \arabic*:}
\begin{document}
	
	\title[]{Maurer-Cartan equation for gentle algebras}
	
	\author[M. M\"{u}ller]{Monique M\"{u}ller $^1$}
	
	\author[M. J. Redondo]{Mar\'\i a Julia Redondo $^{2, 3}$}

	\author[F. Rossi Bertone]{Fiorela Rossi Bertone $^2$}
	
	\author[P. Suarez]{Pamela Suarez $^4$}
	
	\address{$^1$ Departamento de Matem\'atica e Estat\'istica, Universidade Federal de S\~ao Jo\~ao del-Rei, Pra\c ca Frei Orlando, 170, Centro, S\~ao Jo\~ao del-Rei, Minas Gerais, Brazil, CEP: 36307-352 }
	\email{monique@ufsj.edu.br}

	\address{$^2$ Instituto de Matem\'atica (INMABB), Departamento de Matem\'atica, Universidad Nacional del Sur (UNS)-CONICET, Bah\'\i a Blanca, Argentina}
	\email{mredondo@uns.edu.ar,  fiorela.rossi@uns.edu.ar}
	
	\address{$^3$ Guangdong Technion Israel Institute of Technology, Shantou, Guangdong Province, China}
	\address{$^4$ Centro marplatense de Investigaciones Matem\'aticas, Facultad de Ciencias Exactas y
		Naturales,  Universidad Nacional de Mar del Plata,  Mar del
		Plata, Argentina}
	\email{psuarez@mdp.edu.ar}

	\begin{abstract} 
		Let $A= \kk Q/I$ be a finite dimensional gentle algebra. In this article, under some hypothesis on the quiver $Q$, we give conditions for nilpotency of the $L_\infty$-structure on the shifted Bardzell's complex $B(A)[1]$. For nilpotent cases, we describe Maurer-Cartan elements.
	\end{abstract}
	
	\subjclass[2020]{16S80;16E40;18G35}
	\keywords{Maurer-Cartan elements, gentle algebras, deformations.}

	\maketitle

	\setcounter{tocdepth}{2}

	\section{Introduction}

	Given an associative algebra $A$, it is well known that the set of equivalence classes of deformations of $A$, in the sense of Gerstenhaber \cite{G1}, is in one-to-one correspondence with the set of Maurer-Cartan elements modulo Gauge equivalence, see for instance \cite{Man}. 
	In order to find Maurer-Cartan elements, one can use the shifted Hochschild complex $C(A)[1]$ which, endowed with the Gerstenhaber bracket, admits a structure of dg-Lie algebra.

	The natural generalization of dg-Lie algebras is the concept of $L_\infty$-algebras. Moreover, two different deformation problems are equivalent if the corresponding dg-Lie algebras are equivalent as $L_\infty$-algebras.

	When $A$ is a  monomial algebra, Bardzell's complex $B(A)$ has shown to be more efficient when dealing with computations of the Hochschild cohomology groups.  Comparison morphisms between $C(A)$ and $B(A)$ have been described explicitly in \cite{RR1}.  In \cite{RRB} the authors explicitly translate the dg-Lie algebra structure from $C(A)[1]$ to $B(A)[1]$ using the existence of a contraction. Hence, Maurer-Cartan elements can be computed using the generalized Maurer-Cartan equation
	$$l_1 (f) - \sum_{n \geq 2} (-1)^{\frac{(n+1)n}{2}} \frac{1}{n!} l_n(f, \dots , f) =0,$$
	for $f = \sum_{i \geq 1} f_i t^i$ with $f_i \in B^1(A)[1]$. 
	In general, this equation contains  an infinite sum and its convergence may be guaranteed by some nilpotence condition, see for instance \cite{Get, Yalin}. We are interested in finding conditions on $A$ such that $l_n(f_1, \dots , f_n) =0$ for all $f_i \in B^1(A)[1]$ for $n\gg 0$.

	Gentle algebras constitute a large and well-studied class of quadratic monomial algebras. They first appeared as iterated tilted algebras of Dynkin type $A_n$ and $\tilde{A}_n$, see \cite{AH, AS}, so they can be seen as generalizations of algebras of Dynkin type $A_n$.
	
	In this paper, we deal with gentle algebras $A=\kk Q/I$ whose quiver $Q$ has no parallel arrows and no oriented cycles. For them, in Theorem \ref{teo ln=0}, we give conditions in the quiver to ensure the nilpotency of the brackets $l_n(f_1,\dots,f_n)$ for elements $f_i\in B^1(A)[1]$. Moreover, we find a sufficient condition for the vanishing of
	$l_n(f_1,\dots,f_n)$ for $n\geq 5$, and a necessary condition for the nonvanishing of $l_4(f_1, f_2,f_3,f_4)$, for all $f_i\in B^1(A)[1]$. Furthermore, for nilpotent brackets, we describe the Maurer-Cartan elements in Theorem \ref{teo MC elements} and prove that, in fact,  they are just given by $2$-cocycles.

	The paper is organized as follows. Section \ref{sec 2} contains definitions and notation about gentle algebras, Hochschild and Bardzell's complexes, $L_\infty$-algebras, and Maurer-Cartan elements. 
	In Section \ref{sec 3} we give the needed formulae for quadratic algebras. Section \ref{sec 4} is devoted to the study of gentle algebras and the description of the subquivers that may imply the nonvanishing of the brackets $l_n(f_1,\dots,f_n)$ for elements $f_i\in B^1(A)[1]$. Finally, in Section \ref{sec 5} we present our main results concerning Maurer-Cartan elements.

	\section{Preliminaries}\label{sec 2}
	Let $\kk$ be a field of characteristic zero. 
	
	\subsection{Quivers, relations and gentle algebras} 
	Consider an associative finite dimensional algebra  $A$
	which is a quotient of a path algebra $\kk Q/I$, for a quiver $Q$ and an admissible ideal $I$. We write $Q_0$ and $Q_1$ for the set of vertices and arrows, respectively. Also, $s, t: Q_1\to Q_0$ denote the source and target maps of an arrow. A path $w = \alpha_1 \cdots \alpha_n$ in $Q$ of length $n \geq 1$ is a sequence of arrows $\alpha_1, \ldots, \alpha_n$
	such that $t(\alpha_i) = s(\alpha_{i+1})$ for $1 \leq i < n$. Two paths in $\kk Q$ are said to be parallel if they share starting and ending points. The ideal $I$ is generated by a set $\mathcal{R}$ of paths that are minimal with respect to inclusion of paths, and $E=\kk Q_0$.
	If $I$ is generated by paths of length two, the algebra is called quadratic. 
	By abuse of notation we use the same letters to refer to paths in the path algebra $\kk Q$ or the quotient algebra $A$.
	
	\begin{definition}
		A quadratic algebra  $A=\kk Q/I$ is gentle if it satisfies the following conditions:
		\begin{enumerate}
			\item  $Q$ is finite;
			\item If $\alpha \in Q_1$, then  there exists at most one arrow $\beta$ with $s(\beta)=t(\alpha)$  such that $\alpha \beta \in I$ and
			at most one arrow $\gamma$ with $t(\gamma) = s(\alpha)$ such that $\gamma \alpha \in I$;
			\item If $\alpha \in Q_1$, then there exists at most one arrow $\beta$ such that $\alpha \beta \not \in I$ and at most one arrow $\gamma$ such that $\gamma \alpha \not \in I$;
			\item The admissible ideal $I$ is generated by the relations in (b).
		\end{enumerate} 
	\end{definition}
	
	Notice that from (b) and (c) we have that there are at most two incoming and
	at most two outgoing arrows at each vertex in the quiver $Q$.

	\subsection{Hochschild and Bardzell's complexes}
	
	The  \textit{bar resolution} of $A$ 
	$$C_*(A)= (A \otimes A^{\otimes n} \otimes A, d_n)_{n \geq 0}$$
	is an standard free resolution of the $A$-bimodule $A$ where the differential is given by
	\begin{align*}
		d_n (a_0 \otimes \cdots \otimes  a_{n+1}) =  \sum_{i=0}^{n} (-1)^{i}  a_0 \otimes \cdots \otimes a_{i-1} \otimes & a_i a_{i+1} \otimes  a_{i+2} \otimes \cdots \otimes a_{n+1}.
	\end{align*}
	The $\kk$-$A$-map
	\begin{equation*} 
		s_n :A \otimes A^{\otimes n} \otimes  A \to A \otimes A^{\otimes n+1} \otimes A
	\end{equation*}
	defined by $s_n(x) = 1 \otimes x$ for any $x \in  A \otimes A^{\otimes n} \otimes  A$ is an homotopy contraction, that is, 
	\begin{equation*}
		\Id = s_{n-1}d_n + d_{n+1} s_n.
	\end{equation*}
	The  \textit{Hochschild complex} $C^*(A)= ( \Hom_\kk (A^{\otimes n}, A),  d^n)_{n \geq 0}$ is obtained by applying the functor $\Hom_{A-A}(-,A)$ to the bar resolution, and using the isomorphism
	\begin{align*}
		\Hom_{A-A}(A\otimes V \otimes A,A) & \simeq \Hom_\kk (V,A), \qquad
		\hat f \mapsto  f 
	\end{align*}
	given by $f (v) = \hat f (1 \otimes v \otimes 1)$.
	The differential of a $n$-cochain is the $(n+1)$-cochain given by
	\begin{align*}
		&(-1)^n (d^nf)(a_0 \otimes  \cdots \otimes a_{n}) =   \hat f d_{n+1} (1 \otimes a_0 \otimes  \cdots \otimes a_{n} \otimes 1)  =  a_0 f(a_1 \otimes \cdots \otimes a_n) \\
		&\, - \sum_{i=0}^{n-1} (-1)^{i} f(a_0 \otimes \cdots \otimes a_{i-1}\otimes a_i a_{i+1} \otimes a_{i+1} \otimes \cdots \otimes a_n) 
		+ (-1)^{n+1} f(a_0 \otimes \cdots \otimes a_{n-1})a_n.
	\end{align*}
	Its cohomology is the  \textit{Hochschild cohomology} $\HH(A)$ of $A$ with coefficients in $A$. 
	
	It is well known that the Hochschild cohomology groups $\HH^n(A)$ can be identified with the groups $\Ext^n_{A-A}(A,A)$  since $A$ is $\kk$-projective. Thus, we can replace the bar resolution with any convenient resolution of the $A$-bimodule $A$.
	When $A$ is a monomial algebra,  the Bardzell's resolution, see \cite{B}, have been mainly used to compute the Hochschild cohomology. This resolution  is defined as
	$$B^*(A) = (\Hom_{E-E} ({\kk} AP_n, A),  (-1)^n \delta^n)_{n \geq 0}$$
	where $AP_n$ is the set of supports of $n$-concatenations  associated to a presentation $(Q,I)$ of $A$, $E=\kk Q_0$ and $\delta^n (f) = \hat f \delta_{n+1}$,  see \cite[Section 2.3]{RR1} for precise definitions.

	\subsection{Gerstenhaber bracket}
	
	Let $f\in \Hom_\kk (A^{\otimes n}, A)$  and $g\in \Hom_\kk (A^{\otimes m}, A)$. The Gerstenhaber bracket $[f, g] \in \Hom_\kk (A^{\otimes n+m-1}, A),$ is given by
	$$[f, g] = f \circ g -  (-1)^{(n-1)(m-1)} g \circ f $$
	where $\circ$ denotes the Gerstenhaber product
	\begin{align*}
		f \circ g= \sum_{i=0}^{n-1} (-1)^{i(m+1)} f \circ_i g =\sum_{i=0}^{n-1} (-1)^{i(m+1)} f (\Id^{\otimes i} \otimes g \otimes \Id^{\otimes n-i-1}).
	\end{align*}
	In particular, for $n=m=2$,
	\begin{align*}
		[f, g] =  f(g \otimes \Id - \Id \otimes g) +  g(f \otimes \Id - \Id \otimes f).
	\end{align*}
	It is well known that the shifted Hochschild complex
	$C^*(A)[1]$ endowed with the Gerstenhaber bracket is a dg-Lie algebra, see \cite{Tam, TT}.
	
	\subsection{\texorpdfstring{$L_{\infty}$}{Linf}-structure}\label{subsec l inf} Following \cite{RRB}, we recall the structure of $L_{\infty}$-algebra on the shifted Bardzell's complex and the weak $L_{\infty}$-equivalence with the shifted Hochschild's complex.
	
	\begin{definition}
		A $L_\infty$-algebra is a $\Z$-graded vector space $L$ together with linear maps
		$$l_n: \otimes^n L \to L$$
		of degree $2-n$ subject to the following axioms:
		\begin{itemize}
			\item for every $n \in \N$, every homogeneous $v_1, \ldots, v_n \in L$, and every $\sigma \in \mathbb{S}_n$,
			$$l_n (v_{\sigma(1)},\ldots,  v_{\sigma(n)}) = \chi(\sigma)\  l_n(v_1, \dots,  v_n);$$
			\item for every $n \in \N$, and every homogeneous $v_1, \ldots, v_n \in L$,
			$$\sum_{i+j=n+1}  \sum_{\sigma \in \mathbb{S}_{i,n-i}} (-1)^{i {(j-1)}}  \chi(\sigma)  \
			l_j ( l_i (v_{\sigma(1)},  \ldots, v_{\sigma(i)}), v_{\sigma(i+1)},\ldots, v_{\sigma(n)})  =0
			$$
			where $\chi(\sigma)$ denotes the antisymmetric Koszul sign.
		\end{itemize}	
	\end{definition}
	Let $B=B^*(A)[1]$ and $C=C^*(A)[1]$ be the shifted Bardzell's and Hochschild's complexes, respectively. Let $F_*, G_*$ be the comparison morphisms between the corresponding projective resolutions described explicitly in \cite{RR1} and in \cite{BW}. Let  $H_*$ be the homotopy map between $\Id$ and $F_*G_*$  given by the $A$-$A$-maps $$H_n: C_n(A) \to C_{n+1}(A)$$ defined by $H_0=0$ and  $$H_n= \Id \otimes
	F_n G_n s_{n-1}   - \Id \otimes  H_{n-1} d_n s_{n-1}.$$
	Applying the functor $\Hom_{A-A}(-, A)$ we get the functors $F^*, G^*, H^*$, and we define recursively the linear maps 
	$$l_n: \otimes^n B\to B, \, v_n: \otimes^n B\to C, \mbox{ and } \phi_n: \otimes^n B\to C$$ 
	by $l_1= (-1)^{*+1}\delta^*$, $v_1 =  0$, $\phi_1= G^*$;
	and, for $n \geq 2$ and homogeneous $f_1, \dots, f_n \in B$, 
	\begin{align}\label{eq: vn}
		v_n  & =  \sum_{t=1}^{n-1} \sum_{\tau \in \mathbb{S}^{-}_{t,n-t}} \chi(\tau){ \kappa (\tau)_{t}} \  [ \phi_t , \phi_{n-t}] \hat \tau  \\  \notag
		l_n  & =   F^* v_n, \\ 
		\notag
		\phi_n & = H^*  v_n
	\end{align}
	where
	\begin{align*}
		\hat \tau (f_1 \otimes  \cdots \otimes f_n) & = f_{\tau(1)} \otimes \cdots \otimes f_{\tau(n)}, \mbox{ and}  \\
		[ \phi_t , \phi_{n-t}] (f_1, \ldots, f_n) & = [ \phi_t (f_1, \ldots, f_t), \phi_{n-t}(f_{t+1}, \ldots, f_n)].
	\end{align*}
	
	\begin{theorem} \cite{RRB}\label{infinito-bardzell} With the notation above, the maps  $l_n:\otimes^n B\to B$, $n\in \N$,  give $B$  a $L_\infty$-structure, and the quasi-isomorphism $G$ extends to a weak $L_\infty$-equivalence $\phi: B \to C$.
	\end{theorem}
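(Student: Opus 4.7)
The plan is to verify an instance of the Homotopy Transfer Theorem for $L_\infty$-algebras: the dg-Lie structure on $C$ induced by the Gerstenhaber bracket is transferred along the contraction data $(F_*, G_*, H_*)$ between the Bardzell and bar resolutions to an $L_\infty$-structure on $B$, together with a weak $L_\infty$-equivalence $\phi \colon B \to C$ extending $G^*$. The recursive formulas for $v_n$, $l_n = F^* v_n$ and $\phi_n = H^* v_n$ are the Kontsevich--Soibelman sum-over-trees prescription written in a contracted form, so the task is to check that these formulas do produce the required identities.

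I would proceed in three stages. First, record the dual contraction identities $F^* G^* = \Id$, $\Id - G^* F^* = \delta^* H^* + H^* \delta^*$, together with the side conditions $H^* H^* = 0$, $G^* H^* = 0$ and $H^* F^* = 0$, which follow from the explicit definition of $H_*$ through $s_*$ and the comparison morphisms $F_*, G_*$ of \cite{RR1}. Second, prove by induction on $n$ that $v_n$, and hence $l_n$ and $\phi_n$, are graded symmetric in their arguments; the sum over $(t, n{-}t)$-unshuffles with the Koszul signs $\chi(\tau)$ built into $v_n$ reduces this to a standard shuffle identity, the base case $n=2$ being the graded symmetry of the Gerstenhaber bracket after the shift. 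Third, establish simultaneously by induction on $n$ a master identity of the form
\[
\delta^* \phi_n(f_1, \dots, f_n) \;=\; v_n(f_1, \dots, f_n) \;-\; G^* l_n(f_1, \dots, f_n)
\]
modulo terms quadratic in $\phi_k, l_k$ with $k < n$, obtained by applying the homotopy identity $\delta^* H^* + H^* \delta^* = \Id - G^* F^*$ to $v_n$. Applying $F^*$ to this master identity and using the Jacobi relation for the Gerstenhaber bracket on $C$ collapses it to the generalized Jacobi relation for $\{l_n\}$ that defines the $L_\infty$-structure on $B$, while applying $H^*$ yields the weak $L_\infty$-morphism equations for $\phi$. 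The weak-equivalence property is then immediate since $\phi_1 = G^*$ is a quasi-isomorphism.

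The main obstacle is sign bookkeeping. The signs $\kappa(\tau)_t$ in the definition of $v_n$, the Koszul signs $\chi(\tau)$, the shift sign in $l_1 = (-1)^{*+1}\delta^*$, and the signs $(-1)^{i(j-1)}$ appearing in the Stasheff identity must all be reconciled, and a direct combinatorial attack quickly becomes unwieldy. The cleanest way to manage this is to recast everything in the coalgebra formalism, where an $L_\infty$-algebra is a square-zero coderivation on the reduced cofree cocommutative coalgebra on $B[1]$ and an $L_\infty$-morphism is a coalgebra map; in that setting the transferred structure is determined for degree reasons, the induction reduces to comparing coefficients of a generating series in $\overline{S}(B[1])$, and the signs are absorbed into the shifted grading, so that once the contraction identities and graded symmetry are established, the remaining verification becomes essentially formal.
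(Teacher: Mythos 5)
This theorem is imported from \cite{RRB} and the paper gives no proof of it, so there is nothing internal to compare against; the recursive definitions of $v_n$, $l_n=F^*v_n$ and $\phi_n=H^*v_n$ reproduced in Subsection 2.4 are, however, exactly the homotopy-transfer recursion, and your outline (contraction identities, graded symmetry via the unshuffle sums, a master identity whose $F^*$- and $H^*$-images give the Stasheff identities for $\{l_n\}$ and the morphism equations for $\phi$, with the sign bookkeeping delegated to the coderivation formalism on $\overline{S}(B[1])$) is the same mechanism the cited source uses. Two small points to repair if you flesh this out: the side conditions as written do not typecheck --- with $F^*\colon C\to B$ playing the role of the projection, $G^*\colon B\to C$ the inclusion and $H^*$ the homotopy on $C$, they should read $F^*H^*=0$ and $H^*G^*=0$ --- and you should either verify that the specific $H_*$ built from $s_*$ satisfies them or invoke the version of the transfer theorem that does not require side conditions. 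Also be aware that the phrase ``modulo terms quadratic in $\phi_k, l_k$ with $k<n$'' hides the actual content of the induction: those quadratic terms are where the Jacobi identity of the Gerstenhaber bracket must be used, so a complete write-up has to make them explicit rather than absorb them into a remainder.
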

	
	\subsection{Maurer-Cartan elements}
	
	Let $L$ be a $L_\infty$-algebra. The set $\mathcal{MC}(L)$ of Maurer-Cartan elements consists of all $f \in L^1$ satisfying the generalized Maurer-Cartan equation
	$$l_1 (f) - \sum_{n \geq 2} (-1)^{\frac{(n+1)n}{2}} \frac{1}{n!} {l_n(f, \ldots, f)} =0.$$
	It is well known that the formal deformations of an algebra $A$ over $\Bbbk [[t]]$ are in one-to-one correspondence with equivalence classes of Maurer-Cartan elements in $\mathcal{MC}( C^*(A)[1] \otimes ((t)))$, see for instance \cite[\S 5]{DMZ}.
	
	When $A$ is a monomial algebra and $$B^*(A)[1]= (B^{n+1}(A), {(-1)^{n}} \delta^{n+1}, l_n) $$
	is the $L_\infty$-algebra defined in Subsection \ref{subsec l inf},   Theorem \ref{infinito-bardzell}  and  \cite[Theorem 8.13]{Y}  implies that 
	$$\mathcal{MC} (\overline C^*(A)[1] \otimes ((t))) \simeq \mathcal{MC} (B^*(A)[1] \otimes ((t)))$$
	and  $f = \sum_{i \geq 1} f_i t^i$ with $f_i \in B^1(A)[1]$ satisfies the 
	generalized Maurer-Cartan equation if and only if
	\begin{equation} \label{MC}
		-  \delta^2(f_i) - \sum_{n \geq 2}  \ \sum_{j_1 + \cdots + j_n=i} (-1)^{\frac{(n+1)n}{2}} \frac{1}{n!} {l_n (f_{j_1} , \ldots , f_{j_n})} =0.\end{equation}

	\section{Formulae for quadratic algebras}\label{sec 3}
	
	From now on, unless explicitly stated, all paths will be considered in $A$, not in $Q$. 
	For any quadratic algebra $A=\kk Q/I$  we present all the needed formulae that will be used throughout this article.
	Recall that $AP_0= Q_0, AP_1= Q_1$, and
	\[ AP_n= \{ \omega=(\alpha_1,  \ldots, \alpha_n): \alpha_i \alpha_{i+1} =0 \mbox{ for every $i$} \}.\]
	The $A^e$-linear maps $F_n : A\otimes \kk AP_n \otimes A \to A^{\otimes n+2}$ and $G_n : A^{\otimes n+2} \to A\otimes \kk AP_n \otimes A $ are defined by
	\begin{align*}
		&F_0(1\otimes e\otimes 1)=e\otimes 1,  \qquad F_1(1\otimes \alpha\otimes 1)= 1\otimes \alpha\otimes 1, \\
		&G_0(1\otimes  1)=1\otimes 1 \otimes 1, \qquad G_1(1\otimes \beta_1\cdots \beta_s \otimes 1)= \sum_{i=1}^s \beta_1\cdots\beta_{i-1}\otimes \beta_i\otimes \beta_{i+1}\cdots\beta_s,
	\end{align*}
	if $\alpha \in Q_1$, $\beta_1\cdots \beta_s$ a nonzero path in $A$, and 
	\begin{align*}
		& F_n(1\otimes (\alpha_1,  \ldots, \alpha_n) \otimes 1) 
		= 1 \otimes \alpha_1 \otimes \cdots \otimes \alpha_n \otimes 1  \\
		& G_n(1\otimes u \alpha_1\otimes \alpha_2 \otimes \cdots \otimes \alpha_{n-1} \otimes \alpha_n v \otimes 1)= 
		u \otimes (\alpha_1, \ldots,   \alpha_n)
		\otimes v\end{align*}
	if $\alpha_i\alpha_{i+1} =0$ for any $i$, $u \alpha_1, \alpha_n v$  nonzero paths in $A$, and it is zero otherwise.
	For $n=1,2$, the $A^e$-map $H_n: C_n(A) \to C_{n+1}(A)$ is given by
	\begin{align*}
		H_1 (1 \otimes  \beta_1 \cdots \beta_m \otimes 1) & = \sum_{i=2}^m 1 \otimes \beta_1 \cdots \beta_{i-1} \otimes \beta_i \otimes \beta_{i+1} \cdots \beta_m
	\end{align*}
	\begin{align*}
		H_2 (1 \otimes  \beta_1 \cdots \beta_m \otimes \gamma_1\cdots \gamma_r \otimes 1) =& 
		\ \Theta(\beta_m, \gamma_1)  \
		1 \otimes \beta_1 \cdots \beta_{m-1} \otimes \beta_m \otimes \gamma_1 \otimes \gamma_2 \cdots \gamma_r 
		\\
		& - \sum_{i=2}^s 1 \otimes \beta_1 \cdots \beta_{m} \otimes  \gamma_1  \cdots \gamma_{i-1} \otimes \gamma_i \otimes \gamma_{i+1} \cdots \gamma_r.
	\end{align*}
	with $\Theta(\beta_m, \gamma_1)=1$ if $\beta_m \gamma_1 =0$, and zero otherwise.
	
	A basis of $B^1(A)[1] = \Hom_{E-E}(\kk AP_2,A)$ is given by the set of
	$\kk$-linear maps $(\alpha_1\alpha_2||u)$ with $\alpha_1\alpha_2=0$, $u$ a nonzero path parallel to $\alpha_1\alpha_2$, and 
	\[
	(\alpha_1\alpha_2||u)(\omega) = 
	\begin{cases}
		u, & \mbox{if $\omega  = (\alpha_1,\alpha_2)$};\\
		0, & \mbox{otherwise.} 
	\end{cases}\]
	
	Since we are interested in computing Maurer-Cartan elements, we have to study the behaviour of 
	\[ l_n(f_1, \dots, f_n)(\omega)\]
	for any $f_i \in B^{1}(A)[1] $ 
	and $\omega= (\alpha_1, \alpha_2, \alpha_3) \in AP_3$. For $n=1$ we 
	have 
	\begin{equation*}
		l_1(f)(\omega)= - \delta^2(f)(\alpha_1,\alpha_2, \alpha_3 )= f( \alpha_1,\alpha_2) \alpha_3-\alpha_1f( \alpha_2,\alpha_3),
	\end{equation*} 
	and, for any $n \geq 2$,
	\begin{align*}
		l_n(f_1,\ldots, f_n)(\omega)  &=v_n(f_1, \ldots, f_n)F_3(1 \otimes (\alpha_1,\alpha_2,\alpha_3) \otimes 1)\\ 
		&=v_n(f_1, \ldots, f_n)(1\otimes \alpha_1\otimes \alpha_2\otimes \alpha_3\otimes 1).
	\end{align*} 
	Hence, using \eqref{eq: vn} and the fact that
	\[\phi_k (f_{i_1} , \dots , f_{i_k})(1 \otimes \alpha_i \otimes \alpha_{j} \otimes 1) = v_k (f_{i_1} , \dots , f_{i_k}) H_2 (1 \otimes \alpha_i \otimes \alpha_{j}\otimes 1) =0\]
	for any $k >1$, we obtain that 
	\begin{align} \label{l_n}
		l_n(f_1,\dots, f_n)(\omega)  
		= &(-1)^n\sum\limits_{i=1}^n \phi_{n-1}(f_1,\ldots, \hat{f}_i, \ldots f_n)(1\otimes f_i(\alpha_1\alpha_2)\otimes \alpha_3\otimes 1)\\ &- (-1)^n\sum\limits_{i=1}^n \phi_{n-1}(f_1,\ldots, \hat{f}_i, \dots f_n)(1\otimes \alpha_1\otimes f_i(\alpha_2\alpha_3)\otimes 1). \notag \end{align}

	The following lemma is a direct generalization of \cite[Lemma 4.5]{RRB}.
	
	\begin{lemma}\label{lemma:phi=0-RBR}  Let $A=\kk Q/I$ be a quadratic algebra,  $u, v$ paths in $\rad A$ such that $uv \neq 0$.
		Then
		$$\phi_n (f_1, \dots, f_n) (1 \otimes u \otimes v \otimes 1)=0$$  
		for all $n \geq 1$ and for any $f_i \in B^{1}(A)[1]$.
	\end{lemma}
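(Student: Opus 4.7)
I would proceed by induction on $n$, in the spirit of \cite[Lemma 4.5]{RRB}. For the base case $n = 1$, recall that $\phi_1 = G^*$, so
\[\phi_1(f)(1 \otimes u \otimes v \otimes 1) = f\bigl(G_2(1 \otimes u \otimes v \otimes 1)\bigr).\]
Since $u, v \in \rad A$, we may write $u = u' \alpha$ and $v = \beta v''$ with $\alpha, \beta \in Q_1$ and $u', v''$ possibly vertices. From the explicit formula for $G_2$ recalled in Section \ref{sec 3}, the right-hand side equals $f(u' \otimes (\alpha, \beta) \otimes v'')$ when $\alpha \beta = 0$ and $0$ otherwise. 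But $uv \neq 0$ forces $\alpha \beta \neq 0$, so the base case follows.

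For the inductive step, assume the statement for every $k < n$. Using $\phi_n = H^* v_n$ we compute
\[\phi_n(f_1, \dots, f_n)(1 \otimes u \otimes v \otimes 1) = v_n(f_1, \dots, f_n)\bigl(H_2(1 \otimes u \otimes v \otimes 1)\bigr).\]
Writing $u = \beta_1 \cdots \beta_m$ and $v = \gamma_1 \cdots \gamma_r$, the nonvanishing of $uv$ implies $\beta_m \gamma_1 \neq 0$, so $\Theta(\beta_m, \gamma_1) = 0$ and the formula for $H_2$ collapses to
\[H_2(1 \otimes u \otimes v \otimes 1) = -\sum_{i=2}^{r} 1 \otimes u \otimes \gamma_1 \cdots \gamma_{i-1} \otimes \gamma_i \otimes \gamma_{i+1} \cdots \gamma_r.\]
In each summand the three middle factors are nonzero paths in $\rad A$, and the consecutive products $u \cdot (\gamma_1 \cdots \gamma_{i-1})$ and $(\gamma_1 \cdots \gamma_{i-1}) \cdot \gamma_i$ are subpaths of $uv$, hence nonzero.

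It remains to show that $v_n(f_1, \dots, f_n)$ annihilates each such summand. By \eqref{eq: vn}, $v_n$ is a linear combination of terms of the form $[\phi_t, \phi_{n-t}] \hat\tau$ with $1 \leq t \leq n-1$, and each $\phi_t(\ldots), \phi_{n-t}(\ldots)$ is a $2$-cochain. The Gerstenhaber bracket of two such $2$-cochains $\phi_a, \phi_b$ evaluated on a triple $(p, q, \gamma)$ expands as
\[\phi_a\bigl(\phi_b(p, q), \gamma\bigr) - \phi_a\bigl(p, \phi_b(q, \gamma)\bigr) + \phi_b\bigl(\phi_a(p, q), \gamma\bigr) - \phi_b\bigl(p, \phi_a(q, \gamma)\bigr).\]
In each of the four terms the inner $\phi$ is evaluated on a pair of nonzero radical paths with nonzero product, and since its index belongs to $\{1, \dots, n-1\}$, the induction hypothesis forces the inner evaluation to vanish. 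Hence each bracket vanishes; right-multiplication by the outer tail $\gamma_{i+1} \cdots \gamma_r$ preserves the vanishing, and summing over all $(t, \tau)$ yields the conclusion. The main technical point is exactly this bookkeeping: checking, for each of the four summands of each Gerstenhaber bracket, that the inner $\phi$ receives nonzero radical paths whose product is itself a subpath of $uv$, which is precisely where the hypothesis $uv \neq 0$ is used throughout.
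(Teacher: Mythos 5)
Your proposal is correct and follows essentially the same route as the paper: the base case via the vanishing of $G_2$ on $1\otimes u\otimes v\otimes 1$ when $uv\neq 0$, and the inductive step via the collapse of $H_2$ (since $\Theta(\beta_m,\gamma_1)=0$) followed by the observation that every inner evaluation in the four-term expansion of $[\phi_t,\phi_{n-t}]$ is on a pair of nonzero radical paths whose product is a subpath of $uv$, hence killed by the inductive hypothesis. The only cosmetic difference is that the paper states the two relevant inner vanishings directly rather than spelling out the bracket expansion, and writes the base case as $\hat f\circ G_2$ acting as a bimodule map; neither affects the argument.
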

	
	\begin{proof}  
		The statement is clear if $n=1$ since $\phi_1(f)=\hat{f} \circ G_2$ and $G_2(1 \otimes u \otimes v \otimes 1)=0$. Now we proceed by induction. Let $u= \beta_1 \cdots \beta_m$ and $v=\gamma_1 \cdots \gamma_r$. For $n \geq 2$, 
		$$\phi_{n}(f_1,\ldots,f_n)(1\otimes  u\otimes v \otimes 1)=v_{n}(f_1,\ldots,f_n)H_2(1\otimes  u\otimes v \otimes 1)$$
		and 
		\[H_2 (1 \otimes  \beta_1 \cdots \beta_m \otimes \gamma_1\cdots \gamma_r \otimes 1)  = - \sum_{i=2}^s 1 \otimes \beta_1 \cdots \beta_{m} \otimes  \gamma_1  \cdots \gamma_{i-1} \otimes \gamma_i \otimes \gamma_{i+1} \cdots \gamma_r.\]
		The result follows since $v_n   =  \sum_{t=1}^{n-1} \sum_{\tau \in \mathbb{S}^{-}_{t,n-t}} \chi(\tau){ \kappa (\tau)_{t}} \  [ \phi_t , \phi_{n-t}] \hat \tau$ with
		$$ [ \phi_t , \phi_{n-t}] = \phi_t (\phi_{n-t} \otimes \Id - \Id \otimes \phi_{n-t}) +\phi_{n-t} (\phi_{t} \otimes \Id - \Id \otimes \phi_{t})$$ and, for any $k$ with $1 \leq k <n$,
		\begin{align*}
			\phi_k (f_{i_1}, \ldots, f_{i_k}) (1 \otimes \beta_1 \cdots \beta_{m} \otimes  \gamma_1  \cdots \gamma_{i-1} \otimes 1) & =0, \mbox{ and} \\
			\phi_k (f_{i_1}, \ldots, f_{i_k}) (1 \otimes  \gamma_1  \cdots \gamma_{i-1} \otimes \gamma_i \otimes 1) &= 0
		\end{align*}
		by inductive hypothesis. 
	\end{proof}
	
	\begin{remark}\label{rmk 4.6} 
		Let $f_1,  \dots, f_n \in B^{1}(A)[1]$, 
		$\alpha, \beta \in Q_1$, $v \in \rad A$.   If $v\alpha\neq 0$, we have that
		\begin{align*}
			v_n (f_1, \dots, f_n)(1 \otimes v \otimes \alpha  \otimes & \beta \otimes 1)= \\
			& (-1)^{n+1} \sum \phi_{n-1}(f_1,\dots,\hat{f}_i,\dots,f_n)(1\otimes v\otimes f_i(\alpha\beta)\otimes 1)
		\end{align*}
		since
		$\phi_t(f_{i_1},\dots,f_{i_t})(1\otimes v \otimes \alpha \otimes 1)=0$ for all $t\geq 1$ by Lemma \ref{lemma:phi=0-RBR} and $\phi_t(f_{i_1},\dots,f_{i_t})(1\otimes \alpha\otimes \beta\otimes 1) = v_t (f_{i_1},\dots,f_{i_t}) H_2(1\otimes \alpha\otimes \beta\otimes 1) =0$ for $t\geq 2$.  Then, arguing as in the proof of \cite[Proposition 4.6]{RRB}, the result follows.
	\end{remark}

	\begin{lemma} \label{lemma: no parallel} Let $A=\kk Q/I$ be a quadratic algebra, 
		$u=\beta_1\cdots\beta_m$,  $v= \gamma_1\cdots \gamma_r$ with $\beta_m \gamma_1 =0$, and $f_1,\dots, f_n \in B^{1}(A)[1]$.
		If, for all $i$, there is no nonzero path $w$ appearing as a summand of $f_i(\beta_m \gamma_1)$
		such that $\beta_{m-1} w=0$ or $w\gamma_2=0$,
		then
		\[\phi_n(f_1, \ldots, f_n)(1\otimes u \otimes v \otimes 1)=0\]
		for all $n\geq 2$.
	\end{lemma}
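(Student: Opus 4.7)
The plan is to prove the lemma by induction on $n \geq 2$, starting from
$$\phi_n(f_1,\ldots,f_n)(1\otimes u\otimes v\otimes 1) = v_n(f_1,\ldots,f_n)\,H_2(1\otimes u\otimes v\otimes 1).$$
Since $\beta_m\gamma_1 = 0$ one has $\Theta(\beta_m,\gamma_1) = 1$, so the formula for $H_2$ gives
$$H_2(1\otimes u\otimes v\otimes 1) = T_0 - \sum_{i=2}^r T_i,$$
where $T_0 = 1\otimes\beta_1\cdots\beta_{m-1}\otimes\beta_m\otimes\gamma_1\otimes\gamma_2\cdots\gamma_r$ and $T_i = 1\otimes u\otimes\gamma_1\cdots\gamma_{i-1}\otimes\gamma_i\otimes\gamma_{i+1}\cdots\gamma_r$. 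I will show that $v_n$ annihilates each $T_\bullet$ separately. Throughout, $v_n$ is expanded via \eqref{eq: vn} into brackets $[\phi_t(\ldots),\phi_{n-t}(\ldots)]$ of two $2$-cochains, and each such bracket opens as $[g,h](a,b,c) = g(h(a,b),c) - g(a,h(b,c)) + h(g(a,b),c) - h(a,g(b,c))$.

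For each $i \geq 2$, $A^e$-linearity reduces $v_n(\ldots)(T_i)$ to $v_n(\ldots)(u,\gamma_1\cdots\gamma_{i-1},\gamma_i)$ times $\gamma_{i+1}\cdots\gamma_r$. The pair $(\gamma_1\cdots\gamma_{i-1},\gamma_i)$ has nonzero product $\gamma_1\cdots\gamma_i$, so Lemma \ref{lemma:phi=0-RBR} kills $\phi_k(\ldots)(1\otimes\gamma_1\cdots\gamma_{i-1}\otimes\gamma_i\otimes 1)$ for every $k$, eliminating the bracket terms containing $g(b,c)$ or $h(b,c)$. In the remaining terms, $\phi_t(\ldots)(1\otimes u\otimes\gamma_1\cdots\gamma_{i-1}\otimes 1)$ vanishes for $2\leq t\leq n-1$ by the inductive hypothesis applied to the sub-configuration $(u,\gamma_1\cdots\gamma_{i-1})$, which inherits the assumption on the $f_j$'s since the last arrow of $u$ is still $\beta_m$, the first arrow of the second factor is still $\gamma_1$, and the arrows $\beta_{m-1},\gamma_2$ (if present) are unchanged. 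For $t=1$ the direct computation $\phi_1(f_j) = f_j\circ G_2$ yields $\beta_1\cdots\beta_{m-1}\,f_j(\beta_m\gamma_1)\,\gamma_2\cdots\gamma_{i-1}$; feeding this into the outer $\phi_{n-1}(\ldots)(1\otimes\,\cdot\,\otimes\gamma_i\otimes 1)$ and writing $f_j(\beta_m\gamma_1) = \sum_p c_p p$, the hypothesis $p\gamma_2 \neq 0$ yields $(\beta_1\cdots\beta_{m-1}\,p\,\gamma_2\cdots\gamma_{i-1})\gamma_i \neq 0$ in $A$, so Lemma \ref{lemma:phi=0-RBR} kills this term too.

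For $T_0$ the relevant triple is $(\beta_1\cdots\beta_{m-1},\beta_m,\gamma_1)$, with outer right multiplication by $\gamma_2\cdots\gamma_r$. The pair $(\beta_1\cdots\beta_{m-1},\beta_m)$ multiplies to $u \neq 0$, so Lemma \ref{lemma:phi=0-RBR} annihilates $\phi_k(\ldots)(a,b)$ for every $k$, removing the bracket terms containing $g(a,b)$ or $h(a,b)$. For the pair $(\beta_m,\gamma_1)$, the identity recorded right before \eqref{l_n} gives $\phi_k(\ldots)(1\otimes\beta_m\otimes\gamma_1\otimes 1) = 0$ for $k\geq 2$, while $\phi_1(f_j)(\beta_m,\gamma_1) = f_j(\beta_m\gamma_1)$. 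The only surviving contributions therefore come from $t=1$ or $t=n-1$, and have the form $\phi_{n-1}(\ldots)(1\otimes\beta_1\cdots\beta_{m-1}\otimes w\otimes 1)$ for some path $w$ appearing in $f_j(\beta_m\gamma_1)$; by hypothesis $\beta_{m-1} w \neq 0$, so $(\beta_1\cdots\beta_{m-1}) w$ is a nonzero path and Lemma \ref{lemma:phi=0-RBR} completes the vanishing. The base case $n=2$ proceeds identically, since $v_2$ is a combination of brackets $[\phi_1(f_j),\phi_1(f_k)]$ and each of the four resulting summands vanishes by the same $G_2$-analysis. The main bookkeeping obstacle lies in the degenerate cases $m = 1$ or $r = 1$: the corresponding condition on $\beta_{m-1}$ or $\gamma_2$ becomes vacuous, but the offending bracket terms also degenerate—either disappearing by $A^e$-linearity or by the normalization $\phi_k(\ldots,1,\ldots) = 0$—so the argument still goes through.
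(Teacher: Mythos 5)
Your proof is correct and follows essentially the same route as the paper's: expand $H_2(1\otimes u\otimes v\otimes 1)$, kill every term whose two adjacent tensor factors have nonzero product via Lemma \ref{lemma:phi=0-RBR}, and use the hypothesis on the summands of $f_j(\beta_m\gamma_1)$ to dispose of the two surviving families of terms (those of the form $1\otimes\beta_1\cdots\beta_{m-1}\otimes w\otimes 1$ and $1\otimes\beta_1\cdots\beta_{m-1}\,w\,\gamma_2\cdots\gamma_{i-1}\otimes\gamma_i\otimes 1$). The only difference is that you induct on $n$ where the paper inducts on $r$, the length of $v$, in order to annihilate $\phi_t(\ldots)(1\otimes u\otimes\gamma_1\cdots\gamma_{i-1}\otimes 1)$ for $t\geq 2$; both inductions are well-founded because the statement is quantified over all admissible pairs $(u,v)$, and the remaining computations coincide.
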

	
	\begin{proof}  For any $n\geq 2$, by definition, $\phi_n(f_1, \dots, f_n)(1\otimes u \otimes v \otimes 1)$ equals
		\begin{align*}
			& v_n (f_1, \ldots, f_n) (1 \otimes \beta_1 \cdots \beta_{m-1} \otimes \beta_m \otimes \gamma_1 \otimes 1) \gamma_2 \cdots \gamma_r \\
			& - \sum_{i=2}^r v_n (f_1, \ldots, f_n) (1\otimes \beta_1 \cdots \beta_{m} \otimes  \gamma_1  \cdots \gamma_{i-1} \otimes \gamma_i \otimes 1)\gamma_{i+1} \cdots \gamma_r.
		\end{align*}
		We proceed by induction on $r$.  If $r=1$, from Lemma \ref{lemma:phi=0-RBR}, for any $t \geq 1$ we get that 
		$\phi_t (f_{i_1}, \ldots, f_{i_t})(1 \otimes \beta_1 \cdots \beta_{m-1} \otimes \beta_m \otimes 1)=0$.  Moreover, for any $t>1$,
		\[ \phi_t (f_{i_1}, \ldots, f_{i_t})(1  \otimes \beta_m \otimes \gamma_1 \otimes 1) = v_t (f_{i_1}, \ldots, f_{i_t}) H_2 (1  \otimes \beta_m \otimes \gamma_1 \otimes 1) =0,  \]
		and 
		\begin{align*}
			\phi_{n-1}(f_1, \ldots, \hat{f}_j, \dots, f_n)(1 \otimes \beta_1 \cdots \beta_{m-1} \otimes {f_j}(\beta_m \gamma_1) \otimes 1)=0
		\end{align*}
		since there is no nonzero path $w$ appearing as a summand in $f_j(\beta_m \gamma_1)$ 
		such that $\beta_{m-1} w=0$. 
		
		For $r>1$, arguing as above, 
		it is clear that 
		\[v_n (f_1, \ldots, f_n) (1 \otimes \beta_1 \cdots \beta_{m-1} \otimes \beta_m \otimes \gamma_1 \otimes 1)=0.\]
		Now, from Lemma \ref{lemma:phi=0-RBR}, we have
		$$\phi_t (f_{i_1}, \ldots, f_{i_t})(1 \otimes  \gamma_1  \cdots \gamma_{i-1} \otimes \gamma_i \otimes 1)=0, \qquad \forall t \geq 1. $$
		Also, by inductive hypothesis  
		\[ \phi_t (f_{i_1}, \ldots, f_{i_t})(1  \otimes \beta_1 \cdots \beta_m \otimes \gamma_1 \cdots \gamma_{i-1} \otimes 1) =  0, \qquad \forall {t\geq 2}.  \]
		Then, $\phi_n(f_1, \ldots, f_n)(1\otimes u \otimes v \otimes 1)$ is equal to a linear combination of terms of the form
		\begin{align*}
			\phi_{n-1}(f_1, \ldots, \hat{f}_j, \dots, f_n)(1  \otimes \beta_1 \cdots \beta_{m-1} f_j(\beta_m \gamma_1)\gamma_2 \cdots \gamma_{i-1} \otimes \gamma_i \otimes 1).
		\end{align*}
		Finally, this vanishes by Lemma \ref{lemma:phi=0-RBR}. In fact, for $i>2$ we use that $\gamma_{i-1}\gamma_i\neq 0$, and, for $i=2$ we have 
		$f_j(\beta_m \gamma_1) \gamma_2 \neq 0$ by hypothesis.
	\end{proof}

	\begin{proposition} \label{Prop. 2.13} Let $A=\kk Q/I$ be a quadratic algebra and 
		let $\omega=(\alpha_1,\alpha_2, \alpha_3)\in AP_3$ such that 
		\begin{enumerate}
			\item there is no nonzero path $u$ parallel to $\alpha_1\alpha_2$ such that $u \alpha_3=0$, and 
			\item there is no nonzero path $u$ parallel to $\alpha_2\alpha_3$ such that $\alpha_1 u=0$.
		\end{enumerate}
		Then $l_n(f_1, \ldots, f_n)(\omega)=0$ for all $n \geq 2$ and for any $f_i \in B^{1}(A)[1]$. 
	\end{proposition}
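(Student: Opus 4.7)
The plan is to reduce the proposition to Lemma \ref{lemma:phi=0-RBR} by unpacking the explicit formula \eqref{l_n}. For $n\geq 2$, that formula writes $l_n(f_1,\dots,f_n)(\omega)$ as a signed sum of terms of the two shapes
\[
\phi_{n-1}(f_1,\ldots,\hat f_i,\ldots,f_n)(1\otimes f_i(\alpha_1\alpha_2)\otimes\alpha_3\otimes 1)
\]
and
\[
\phi_{n-1}(f_1,\ldots,\hat f_i,\ldots,f_n)(1\otimes \alpha_1\otimes f_i(\alpha_2\alpha_3)\otimes 1),
\]
so the target is to show that every such evaluation of $\phi_{n-1}$ vanishes, summand by summand.

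First I would expand $f_i(\alpha_1\alpha_2)=\sum_u c_u\,u$ and $f_i(\alpha_2\alpha_3)=\sum_v c_v\,v$ as $\kk$-linear combinations of nonzero paths $u$ parallel to $\alpha_1\alpha_2$ and $v$ parallel to $\alpha_2\alpha_3$. By $\kk$-linearity of $\phi_{n-1}$ in the inner tensor slot, it is enough to treat one $u$ or $v$ at a time. Hypothesis (a) then forces $u\alpha_3\neq 0$ for every such $u$, and hypothesis (b) forces $\alpha_1 v\neq 0$ for every such $v$, so each pair of factors has a nonzero product in $A$.

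Next I would invoke Lemma \ref{lemma:phi=0-RBR}. Since $\alpha_1,\alpha_3\in Q_1\subset\rad A$ and the paths $u,v$ lie in $\rad A$ (being nonzero of positive length), the hypotheses of that lemma are met with $n-1\geq 1$, so
\[
\phi_{n-1}(\ldots)(1\otimes u\otimes\alpha_3\otimes 1)=0=\phi_{n-1}(\ldots)(1\otimes\alpha_1\otimes v\otimes 1).
\]
Reassembling the vanishing summands via \eqref{l_n} yields $l_n(f_1,\dots,f_n)(\omega)=0$, as required.

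The whole argument is essentially bookkeeping on top of \eqref{l_n} and Lemma \ref{lemma:phi=0-RBR}. The only subtle point I foresee is verifying that $u,v$ really belong to $\rad A$: a nonzero path parallel to $\alpha_1\alpha_2$ (resp.\ $\alpha_2\alpha_3$) could fail this only by being an idempotent, which forces $s(\alpha_1)=t(\alpha_2)$ (resp.\ $s(\alpha_2)=t(\alpha_3)$) and hence an oriented cycle through that vertex. Under the paper's standing assumption that $Q$ has no oriented cycles, this case does not arise, so the reduction to Lemma \ref{lemma:phi=0-RBR} is complete.
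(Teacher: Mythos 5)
Your proof is correct and follows essentially the same route as the paper's: expand $l_n(f_1,\ldots,f_n)(\omega)$ via \eqref{l_n} and kill each summand with Lemma \ref{lemma:phi=0-RBR}, using hypotheses (a) and (b) to ensure the two tensor factors have nonzero product. One small caveat: your closing appeal to a ``standing assumption'' that $Q$ has no oriented cycles is not actually available here, since the proposition is stated for arbitrary quadratic algebras --- but the length-zero edge case you worry about is harmless anyway, because a trivial parallel path $e_v$ satisfies $e_v\alpha_3=\alpha_3\neq 0$ and so is never produced as an obstructing summand.
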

	\begin{proof}
		From equation \eqref{l_n} we have that
		\begin{align*}
			l_n(f_1,\dots, f_n)(\omega)=& (-1)^n\sum\limits_{i=1}^n \phi_{n-1}(f_1,\ldots, \hat{f}_i, \dots f_n)(1\otimes f_i(\alpha_1\alpha_2)\otimes \alpha_3\otimes 1)\\ &- (-1)^n\sum\limits_{i=1}^n \phi_{n-1}(f_1,\ldots, \hat{f}_i, \ldots f_n)(1\otimes \alpha_1\otimes f_i(\alpha_2\alpha_3)\otimes 1). \end{align*}
		From Lemma \ref{lemma:phi=0-RBR} we get that the previous expression vanishes if there is no nonzero path $u$ parallel to $\alpha_1 \alpha_2$ such that 
		$u\alpha_3=0$, and no nonzero path $u$ parallel to $\alpha_2 \alpha_3$ such that 
		$\alpha_1 u=0$.  
	\end{proof}

	\section{Gentle algebras's  case}\label{sec 4} 
	In order to prove our main results for gentle algebras in the next section, we present now a series of lemmas that will refine the necessary conditions to ensure the nilpotency of the brackets $l_n(f_1, \ldots, f_n)$ appearing in the generalized Maurer-Cartan equation. We also give some relevant counterexamples to show that the weakening of the hypotheses is not possible.
	
	We will explain in detail all the steps we have to complete in order to get all the needed results for proving our main results in Theorems \ref{teo ln=0}, \ref{theo l_4 no 0 parte2} and \ref{teo MC elements}.
	
	\begin{steps}
		\item We start with a gentle algebra $A=\kk Q/I$ and prove that $l_2(f_1, f_2)=0$, for any $f_1, f_2 \in B^1(A)[1]$; 
		\item Adding the assumption that $Q$ has no double arrows and no oriented cycles, we prove that $l_3(f_1, f_2, f_3)=0$, for any $f_1, f_2, f_3 \in B^1(A)[1]$; 
		\item If $l_n(f_1, \ldots, f_n) \neq 0$ for some $n \geq 4$ and for some $f_i \in B^1(A)[1], 1 \leq i \leq n$, we prove that $Q$ has a subquiver of the form 
		\[ \xymatrix{ 
			1   \ar[r]_{\alpha_1} \ar@{.>}@/^0.7pc/[rr]^{u}  &   2 \ar[r]_{\alpha_2}         &   3  \ar[r]_{\alpha_3}  \ar@{.>}@/^0.5pc/[r]^{w'}  & 4 &  \mbox{ or} &
			1   \ar[r]_{\alpha_1} \ar@{.>}@/^0.5pc/[r]^{v'}  &   2 \ar[r]_{\alpha_2}  \ar@{.>}@/^0.7pc/[rr]^{u}       &   3  \ar[r]_{\alpha_3}   & 4;
		}\]
		\item More precisely, we prove that the subquivers in the previous step should be as follows:
		\[ \xymatrix{
			1   \ar[r]_{\alpha_1}  &   2 \ar[r]_{\alpha_2}   \ar@{.>}@/^0.7pc/[r]^{u'}       &   3  \ar[r]_{\alpha_3}  \ar@{.>}@/^0.5pc/[r]^{w'}  & 4,
			&  &
			1   \ar[r]_{\alpha_1} \ar@{.>}@/^0.5pc/[r]^{v'}  &   2 \ar[r]_{\alpha_2}  \ar@{.>}@/^0.7pc/[r]^{u'}       &   3  \ar[r]_{\alpha_3}   & 4, \mbox{ or}
			\\
			& 5 \ar[rd]^{\beta_2} \ar[rr]^{\lambda}& & 6  \ar@{.>}@/^1pc/[rrrd]^{\epsilon_2 \cdots \epsilon_p}& &\\
			1   \ar[rr]_{\alpha_1}\ar[ru]^{\beta_1} & &  2 \ar[rr]_{\alpha_2} \ar[ru]^{\epsilon_1}  & &    3  \ar[rr]_{\alpha_3} & & 4; 
		}\]
		\item We prove that if $Q$ has no subquiver of the form
		\[ \xymatrix{
			& & & 5 \ar[rd]^{\gamma_2} \ar[rr]^{\rho} & & 6 \ar@{.>}@/^1pc/[rd]^{\delta_2\cdots\delta_s} &\\
			1   \ar[rr]_{\alpha_1} \ar@{.>}@/^1pc/[rr]^{v'} & &   2 \ar[rr]_{\alpha_2} \ar[ru]^{\gamma_1}  & &    3 \ar[ru]_{\delta_1} \ar[rr]_{\alpha_3} & & 4,
		} \] then $l_n(f_1, \ldots, f_n)=0$ for all $n\geq 5$, for all $f_i \in B^1(A)[1]$;
		\item We prove that if $l_4(f_1, f_2, f_3, f_4) \neq 0$ for some $f_i \in B^1(A)[1], 1 \leq i \leq 4$,
		then $Q$ contains a subquiver of the form
		\[\xymatrix{
			& 5 \ar[rd]^{\beta_2} \ar[rr]^{\lambda}& & 6  \ar@{.>}@/^1pc/[rrrd]^{\epsilon_2\cdots\epsilon_p}& &\\
			1   \ar[rr]_{\alpha_1}\ar[ru]^{\beta_1} & &  2 \ar[rr]_{\alpha_2} \ar[ru]^{\epsilon_1}  & &    3  \ar[rr]_{\alpha_3} & & 4.
		}	\]
	\end{steps}
	
	\begin{remark} Let $A=\kk Q/I$ be a gentle algebra and 
		let $\omega=(\alpha_1,\alpha_2, \alpha_3)\in AP_3$ such that $l_n(f_1, \ldots, f_n)(\omega) \neq 0$ for some $n \geq 2$ and for some $f_i \in B^{1}(A)[1], 1 \leq i \leq n$. Then, applying Proposition \ref{Prop. 2.13}, since $A$ is gentle, we have
		\begin{enumerate}
			\item there is a nonzero path $u=v' \alpha_2$ parallel to $\alpha_1\alpha_2$, or
			\item there is a nonzero path $u=\alpha_2 w'$ parallel to $\alpha_2\alpha_3$.
		\end{enumerate}
	\end{remark}
	
	\begin{lemma} \label{l_2 =0}
		Let $A=\kk Q/I$ be a gentle algebra and 
		let $\omega=(\alpha_1,\alpha_2, \alpha_3)\in AP_3$. Then $l_2(f_1, f_2)(\omega) =0$   for all $f_1, f_2 \in B^{1}(A)[1]$.
	\end{lemma}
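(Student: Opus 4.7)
The plan is to expand $l_2(f_1,f_2)(\omega)$ directly from \eqref{l_n} and the definition $\phi_1=\hat{(\,\cdot\,)}\,G_2$. Setting $u_i:=f_i(\alpha_1\alpha_2)$ and $w_i:=f_i(\alpha_2\alpha_3)$, and working monomial by monomial (using $\kk$-linearity, so that each $f_i$ may be taken to be a basis element of $B^1(A)[1]$), one reduces to evaluating $G_2$ on the four arguments that appear. Only the monomials of $u_i$ ending in an arrow $\beta$ with $\beta\alpha_3\in I$, respectively the monomials of $w_i$ starting with an arrow $\gamma$ with $\alpha_1\gamma\in I$, produce a nonzero contribution; by gentle condition (b) both such arrows must equal $\alpha_2$.

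Decompose therefore $u_i=v_i\alpha_2+u_i^{(2)}$, with $u_i^{(2)}$ the sum of monomials not ending in $\alpha_2$, and $w_i=\alpha_2 w_i'+w_i^{(2)}$, with $w_i^{(2)}$ the sum of monomials not starting in $\alpha_2$. Substituting into \eqref{l_n} yields
\[
l_2(f_1,f_2)(\omega)=v_1w_2+v_2w_1-u_2w_1'-u_1w_2',
\]
and expanding the decompositions shows that the common summand $v_i\alpha_2 w_j'$ cancels, leaving
\[
l_2(f_1,f_2)(\omega)=\bigl(v_1w_2^{(2)}-u_1^{(2)}w_2'\bigr)+\bigl(v_2w_1^{(2)}-u_2^{(2)}w_1'\bigr).
\]

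The hard part will be proving that each of the four remaining products vanishes in $A$; this is the step where gentle condition (c) is essential. For $v_iw_j^{(2)}$, the last arrow $\eta$ of $v_i$ satisfies $\eta\alpha_2\notin I$ because $v_i\alpha_2$ is a nonzero path in $A$, and since $w_j^{(2)}$ begins with an arrow $\xi\neq\alpha_2$ out of vertex $2$, condition (c) applied to $\eta$ forces $\eta\xi\in I$, hence $v_iw_j^{(2)}=0$. The product $u_i^{(2)}w_j'$ is handled symmetrically at vertex $3$: the first arrow $\zeta$ of $w_j'$ satisfies $\alpha_2\zeta\notin I$ (since $\alpha_2 w_j'$ is a nonzero path), and condition (c) then forces $\mu'\zeta\in I$ for any last arrow $\mu'\neq\alpha_2$ appearing in $u_i^{(2)}$, so $u_i^{(2)}w_j'=0$. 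Combining these four vanishings yields $l_2(f_1,f_2)(\omega)=0$ and completes the argument.
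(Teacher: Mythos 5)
Your proof is correct and follows essentially the same route as the paper's: expand $l_2(f_1,f_2)(\omega)$ via \eqref{l_n}, use the formula for $G_2$ together with gentle condition (b) to see that only summands of the form $v'\alpha_2$ in $f_i(\alpha_1\alpha_2)$ and $\alpha_2 w'$ in $f_i(\alpha_2\alpha_3)$ can contribute, and observe that the surviving cross terms $v_i\alpha_2 w_j'$ cancel in pairs while the residual products die by gentle condition (c). Your explicit decomposition $u_i=v_i\alpha_2+u_i^{(2)}$, $w_i=\alpha_2 w_i'+w_i^{(2)}$ just makes the paper's cancellation bookkeeping more systematic.
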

	\begin{proof}
		We will show that
		\begin{align*}
			l_2(f_1, f_2)(\omega) 
			= &\phi_{1}(f_1)(1\otimes f_2(\alpha_1\alpha_2)\otimes \alpha_3\otimes 1) +  \phi_{1}(f_2)(1\otimes f_1(\alpha_1\alpha_2)\otimes \alpha_3\otimes 1)\\ 
			&-  \phi_{1}(f_1)(1\otimes \alpha_1\otimes f_2(\alpha_2\alpha_3)\otimes 1) -  \phi_{1}(f_2)(1\otimes \alpha_1\otimes f_1(\alpha_2\alpha_3)\otimes 1)
		\end{align*}
		is zero. Using Lemma \ref{lemma:phi=0-RBR} we get that each summand $\phi_{1}(f_i)(1\otimes f_j(\alpha_1\alpha_2)\otimes \alpha_3\otimes 1)$
		vanishes if there is no nonzero path $v' \alpha_2$ appearing as a summand in $f_j(\alpha_1\alpha_2)$.  If it appears, and with coefficient $a_j \in \kk$,  then
		\[\phi_{1}(f_i)(1\otimes f_j(\alpha_1\alpha_2)\otimes \alpha_3\otimes 1)  = a_j \phi_{1}(f_i) (1 \otimes v' \alpha_2 \otimes \alpha_3 \otimes 1) 
		= a_j v' f_i (\alpha_2 \alpha_3) \]
		which vanishes except for the case of a nonzero path $\alpha_2 w'$ appearing as a summand in  $f_i(\alpha_2 \alpha_3)$. The same conclusion can be made for
		$\phi_{1}(f_i)(1\otimes \alpha_1\otimes f_j(\alpha_2\alpha_3)\otimes 1)$. Finally, if $v' \alpha_2$ appears as a summand in $f_j(\alpha_1\alpha_2)$ with coefficient $a_j$ and  $\alpha_2 w'$ appears as a summand in  $f_i(\alpha_2 \alpha_3)$ with coefficient $b_i$, we get that 
		\begin{align*}
			\phi_{1}(f_i)(1\otimes f_j(\alpha_1\alpha_2)\otimes \alpha_3\otimes 1) & = a_j b_i v' \alpha_2 u', \mbox{ and} \\
			\phi_{1}(f_i)(1\otimes \alpha_1\otimes f_j(\alpha_2\alpha_3)\otimes 1) & = a_j b_i v' \alpha_2 u'
		\end{align*}
		and the proof follows.
	\end{proof}

	\begin{lemma} \label{l_3 =0}
		Let $A=\kk Q/I$ be a gentle algebra and let $\omega=(\alpha_1,\alpha_2, \alpha_3)\in AP_3$. If $Q$ has no oriented cycles and no
		parallel arrows then $l_3(f_1, f_2, f_3)(\omega)=0$, for all $f_1, f_2, f_3 \in B^{1}(A)[1]$.
	\end{lemma}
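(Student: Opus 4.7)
The plan is to expand $l_3(f_1,f_2,f_3)(\omega)$ via equation \eqref{l_n} for $n=3$, obtaining six $\phi_2$-summands, and to prove that each summand is individually zero by exhibiting an oriented cycle hidden inside any putative nonzero contribution.

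For the first sum, I would apply Lemma \ref{lemma:phi=0-RBR} together with the gentle hypothesis (cf.\ the remark preceding Lemma \ref{l_2 =0}) to restrict the supporting paths in $f_i(\alpha_1\alpha_2)$ to those of the form $p'\alpha_2$ with $p'=\beta_1\cdots\beta_m$ parallel to $\alpha_1$; the no-parallel-arrows assumption forces $m\geq 2$. The formula for $H_2$ then leaves only its $\Theta$-term (since $\alpha_3$ has length $1$), and Remark \ref{rmk 4.6} followed by expansion of $\phi_1=G^*$ via $G_2$ reduces $\phi_2(f_j,f_k)(1\otimes p'\alpha_2\otimes \alpha_3\otimes 1)$ to a sum of expressions $\beta_1\cdots\beta_{m-1}\cdot f_\bullet(\beta_m\lambda)\cdot q_2$ indexed by paths $\lambda q_2\in f_\bullet(\alpha_2\alpha_3)$, where $\lambda$ is the unique second outgoing arrow at $s(\alpha_2)$ (forced by the gentle condition together with $\beta_m\alpha_2\neq 0$). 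Chasing the gentle restrictions one arrow at a time, any path $w$ in the support of $f_\bullet(\beta_m\lambda)$ for which $\beta_1\cdots\beta_{m-1}\cdot w\cdot q_2\neq 0$ must begin with $\beta_m\alpha_2$ and end with $\lambda$; hence $w=\beta_m\alpha_2\,w'''\,\lambda$ for some path $w'''$ from $t(\alpha_2)$ to $s(\alpha_2)$, and then $\alpha_2\cdot w'''$ is an oriented cycle through $s(\alpha_2)$, contradicting the hypothesis. Therefore every first-sum summand vanishes.

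A symmetric argument handles the second sum: the leading $\Theta$-term of $H_2(1\otimes \alpha_1\otimes \alpha_2 q''\otimes 1)$ contributes zero because every Gerstenhaber summand in $v_2$ evaluates $\phi_1$ on an input of the form $(e_{s(\alpha_1)},\cdot)$, and among the internal splittings only the one at position $i=2$ survives (the remaining indices force $\gamma_{i-1}\gamma_i=0$, contradicting the nonvanishing of $\alpha_2 q''$). The same arrow-by-arrow chase on the resulting expression $p^*\cdot f_\bullet(\mu\gamma_2)\cdot \gamma_3\cdots\gamma_s$ forces a path $w'''$ from $t(\alpha_3)$ back to $t(\alpha_2)$, which combined with $\alpha_3$ yields an oriented cycle through $t(\alpha_2)$. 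The principal obstacle I anticipate is this arrow-identification step: at each junction between the various path-fragments one must confirm, via the gentle $(b)$/$(c)$ conditions, that a unique arrow is compatible, and then recognise the forced oriented cycle produced by stringing these identifications together.
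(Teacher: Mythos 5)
Your overall strategy coincides with the paper's: expand $l_3(f_1,f_2,f_3)(\omega)$ via \eqref{l_n}, use Lemma \ref{lemma:phi=0-RBR} and the gentle condition to restrict to summands $v'\alpha_2$ of $f_\bullet(\alpha_1\alpha_2)$ and $\alpha_2 w'$ of $f_\bullet(\alpha_2\alpha_3)$, invoke the absence of parallel arrows to force the relevant path ($v'$ in the first family, the tail $w'$ in the second) to have length at least two, and then kill each surviving $\phi_1$-evaluation by exhibiting a forced closed path. The only organizational difference is that the paper first records that $\phi_1(f)(1\otimes x\otimes y\otimes 1)=0$ whenever $x$ and $y$ both have length greater than one, which immediately reduces the first family of terms to the case where the summand of $f_\bullet(\alpha_2\alpha_3)$ is a single arrow.

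That reduction exposes the one genuine gap in your argument. In the first sum you assert that any $w$ in the support of $f_\bullet(\beta_m\lambda)$ with $\beta_1\cdots\beta_{m-1}\,w\,q_2\neq 0$ must end with $\lambda$. This is forced by $wq_2\neq 0$ only when $q_2$ is nontrivial; when the summand $\lambda q_2$ of $f_\bullet(\alpha_2\alpha_3)$ is the single arrow $\lambda\colon s(\alpha_2)\to t(\alpha_3)$ (a case excluded by none of the hypotheses, and in fact the only one that survives the paper's preliminary observation), there is no right-hand constraint on $w$ and your oriented-cycle contradiction does not arise. The term still vanishes, but for a different reason: the left-hand chase gives $w=\beta_m\alpha_2 u$ with $u$ a path from $t(\alpha_2)$ to $t(\alpha_3)$; $u$ cannot be an arrow (no parallel arrows would force $u=\alpha_3$, yet $\alpha_2\alpha_3=0$), and if $u$ has length at least two, then its last arrow, $\lambda$, and $\alpha_3$ are pairwise distinct arrows ending at $t(\alpha_3)$ unless $Q$ has an oriented cycle, contradicting the gentle bound of two incoming arrows per vertex. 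This is exactly the content of the paper's terse claim that no nonzero path parallel to $\beta_m\lambda$ starts with $\beta_m$. (A purely cosmetic slip elsewhere: in the second sum the closed path you extract runs from $s(\gamma_2)=t(\alpha_2)$ back to $t(\alpha_2)$, so it is an oriented cycle on its own; it is not obtained by composing $\alpha_3$ with a path out of $t(\alpha_3)$ as you describe, though the mechanism and conclusion are correct. Note also that here the tail $\gamma_3\cdots\gamma_s$ is automatically nontrivial, since otherwise $w'$ would be an arrow parallel to $\alpha_3$; so no analogous gap occurs in the second family.)
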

	\begin{proof}
		We claim that $\phi_1(f) (1 \otimes x \otimes y \otimes 1)$ is zero if $x,y$ are both paths of length greater than $1$. Indeed, set $x=x' \nu_1, y=\nu_2 y'$, $\nu_1, \nu_2 \in Q_1$. The result is clear if $\nu_1 \nu_2 \neq 0$. In the other case,
		\[ \phi_1(f)(1 \otimes x' \nu_1 \otimes \nu_2 y' \otimes 1) =
		x' f(\nu_1 \nu_2) y'\]
		is nonzero if at least one summand of $f(\nu_1 \nu_2)$ has a nonzero path starting with $\nu_1$ and ending with $\nu_2$, which is not possible since $Q$ has no oriented cycles.
		
		Now, we have that
		\begin{align*} 
			l_3(f_1, f_2, f_3)(\omega) 
			=&-\sum_{i=1}^3  \phi_{2}(f_1, \hat{f}_i, f_3)(1\otimes f_i(\alpha_1\alpha_2)\otimes \alpha_3\otimes 1)\\ 
			& +  \sum_{i=1}^3 \phi_{2}(f_1, \hat{f}_i, f_3)(1\otimes \alpha_1\otimes f_i(\alpha_2\alpha_3)\otimes 1),
		\end{align*}
		Using Lemma \ref{lemma:phi=0-RBR} we get that the first summands vanish if $v' \alpha_2$ does not appear as a summand in $f_i (\alpha_1 \alpha_2)$,
		and the last summands vanish if $\alpha_2 w'$ does not appear as a summand in $f_i(\alpha_2 \alpha_3)$.
		If $v' \alpha_2$ appears as a summand in $f_i (\alpha_1 \alpha_2)$ with coefficient $a_i$
		then
		\begin{align*}
			\phi_{2}&(f_j, f_k)(1\otimes f_i(\alpha_1\alpha_2)\otimes \alpha_3\otimes 1)  = a_i \phi_{2}(f_j, f_k) (1 \otimes v' \alpha_2 \otimes \alpha_3 \otimes 1) \\
			& = a_i \phi_1(f_j) (1 \otimes v' \otimes f_k (\alpha_2 \alpha_3) \otimes 1) + a_i \phi_1(f_k) (1 \otimes v' \otimes f_j (\alpha_2 \alpha_3) \otimes 1).
		\end{align*} 
		As $Q$ has no parallel arrows, and $v'$ is parallel to $\alpha_1$, then $v'=v''\beta$ with $v''$ a path of positive length and $\beta\in Q_1$.
		So 
		$\phi_1(f_k) (1 \otimes v' \otimes f_j (\alpha_2 \alpha_3) \otimes 1)$ vanishes except for $f_j(\alpha_2\alpha_3)$ having an arrow $\mu$ as a summand, with $\beta \mu =0$. Hence, 
		\[\phi_1(f_k) (1 \otimes v''\beta \otimes \mu \otimes 1)= v'' f_k(\beta \mu)\]
		is zero since there is no nonzero path parallel to $\beta\mu$ starting with $\beta$. Therefore, $\phi_{2}(f_j, f_k)(1\otimes f_i(\alpha_1\alpha_2)\otimes \alpha_3\otimes 1)=0$. 
		
		If $\alpha_2 w'=\alpha_2\delta_1\cdots\delta_s$  appears as a summand in $f_i (\alpha_2 \alpha_3)$ with coefficient $b_i$,
		then
		\begin{align*}
			\phi_{2}(f_j, f_k)(1\otimes \alpha_1\otimes& f_i(\alpha_2\alpha_3)\otimes 1)  = b_i \phi_{2}(f_j, f_k) (1 \otimes  \alpha_1 \otimes \alpha_2 w' \otimes 1) \\
			= &-\sum_{r=1}^s b_i v_{2}(f_j, f_k) (1 \otimes  \alpha_1 \otimes \alpha_2 \delta_1\cdots\delta_{r-1}\otimes \delta_r \otimes 1)\delta_{r+1}\cdots \delta_s \\
			=& - b_i \phi_{1}(f_j) (1 \otimes  f_k(\alpha_1 \alpha_2)\otimes \delta_1 \otimes 1)\delta_{2}\cdots \delta_s \\
			&- b_i \phi_{1}(f_k) (1 \otimes  f_j(\alpha_1 \alpha_2)\otimes \delta_1 \otimes 1)\delta_{2}\cdots \delta_s.
		\end{align*}
		Now, $\phi_{1}(f_k) (1 \otimes  f_j(\alpha_1 \alpha_2)\otimes \delta_1 \otimes 1)\delta_{2}\cdots \delta_s$ vanishes except for $f_j(\alpha_1\alpha_2)$ having a path $\beta_1\cdots\beta_m$ as a summand, with $\beta_m \delta_1 =0$. In that case we obtain
		\[\beta_1\cdots\beta_{m-1}f_k(\beta_m\delta_1)\delta_2\cdots\delta_s\] 
		which is nonzero if $\beta_1\cdots\beta_{m-1}f_k(\beta_m\delta_1)$ has as summand a path ending with $\delta_1$, since $s>1$. This is not possible because $Q$ has no oriented cycles. Then $\phi_{2}(f_j, f_k)(1\otimes \alpha_1\otimes f_i(\alpha_2\alpha_3)\otimes 1)=0 $ and the lemma follows.
	\end{proof}
	
	If Q has oriented cycles or parallel arrows the previous result does not hold, as we can see in the following examples.

	\begin{example} \label{parallel arrows}
		\cite[Example 4.3]{RRB}.  Let $A= \kk Q/I$ with quiver
		\[ \xymatrix{1 \ar[r]_{\alpha_1}&2\ar@<1ex>[r]^{\gamma}\ar[r]_{\alpha_2}&3\ar@<1ex>[r]^{\delta}\ar[r]_{\alpha_3}&4}\]
		and $I=<\alpha_1\alpha_2, \alpha_2\alpha_3, \gamma\delta >$.  Let $f=(\alpha_1\alpha_2||\alpha_1\gamma)+(\alpha_2\alpha_3|| \alpha_2\delta)+(\gamma\delta||\gamma\alpha_3+\alpha_2\delta)$. 
		One can check that
		\[l_n (f, \ldots, f) {(\alpha_1,\alpha_2,\alpha_3)} = \begin{cases} 
			(-1)^{\frac{n-1}{2}}n!  \  ( (\alpha_1 \alpha_2, \alpha_2 \alpha_3) || \alpha_1 \gamma \alpha_3), & \mbox{if $n$ is odd;} \\
			0,  & \mbox{otherwise.} \\
		\end{cases}\]
	\end{example}
	\begin{example} \label{oriented cycles}
		Let $A= \kk Q/I$ with quiver
		\[ \xymatrix{ 
			& 7 \ar[rd]^{\lambda_2} & & \\
			& 5 \ar[u]_{\lambda_1} \ar[rd]^
			{\beta_2}&6 \ar[rd]^{\delta_2}&\\
			1\ar[r]_{\alpha_1}\ar[ru]^{\beta_1}&2 \ar@/^1.5pc/[uu]^{\nu_2}\ar[r]_{\alpha_2}&3\ar[u]_{\delta_1}\ar[r]_{\alpha_3} &4 \ar@/^1.5pc/[ll]^{\nu_1}}
		\]
		and $I=<\alpha_1\alpha_2, \alpha_2\alpha_3, \nu_1\nu_2, \beta_2\delta_1, \beta_1\lambda_1, \delta_2\nu_1,   \nu_2\lambda_2, \lambda_2\delta_2>$.  Let
		\begin{align*}
			f=(\alpha_1\alpha_2||\beta_1\beta_2+ & \beta_1\beta_2\alpha_3\nu_1\alpha_2)+(\alpha_2\alpha_3||\alpha_2\delta_1\delta_2) \\
			+&(\beta_2 \delta_1||\beta_2\alpha_3\nu_1\alpha_2\delta_1+\lambda_1\lambda_2)
			+(\beta_1\lambda_1||\alpha_1\nu_2)+(\nu_2\lambda_2||\alpha_2\delta_1). 
		\end{align*}
		One can check that
		\[l_n (f, \ldots, f) {(\alpha_1,\alpha_2,\alpha_3)} = \begin{cases} 
			-n! \beta_1\beta_2\alpha_3\nu_1\alpha_2\delta_1\delta_2, & \mbox{if $3$ divides $n$;} \\
			0,  & \mbox{otherwise.} \\
		\end{cases}\]
	\end{example}

	Now we present a result concerning the nonvanishing of $l_n$ for $n \geq 4$.
	\begin{proposition}\label{prop 1}
		Let $A=\kk Q/I$ be a gentle algebra and 
		let $\omega=(\alpha_1,\alpha_2, \alpha_3)\in AP_3$ such that $l_n(f_1, \ldots, f_n)(\omega)\neq 0$ for some $n \geq 4$ and for some $f_i \in B^{1}(A)[1], 1 \leq i \leq n$. Then 
		\begin{enumerate}
			\item
			there is a nonzero path $u$ parallel to $\alpha_1\alpha_2$ and there is a nonzero path $w=\alpha_2 w'$ parallel to  $\alpha_2\alpha_3$ such that $uw'=0$ 
			\[ \xymatrix{ 
				1   \ar[r]_{\alpha_1} \ar@{.>}@/^0.7pc/[rr]^{u}  &   2 \ar[r]_{\alpha_2}         &   3  \ar[r]_{\alpha_3}  \ar@{.>}@/^0.5pc/[r]^{w'}  & 4
			} , \mbox{ or} \]
			\item
			there is a nonzero path $v= v' \alpha_2$ parallel to $\alpha_1\alpha_2$ and there is a nonzero path $u$ parallel to  $\alpha_2\alpha_3$ such that $v' u =0$ 
			\[ \xymatrix{ 
				1   \ar[r]_{\alpha_1} \ar@{.>}@/^0.5pc/[r]^{v'}  &   2 \ar[r]_{\alpha_2}  \ar@{.>}@/^0.7pc/[rr]^{u}       &   3  \ar[r]_{\alpha_3}   & 4.
			}\]
		\end{enumerate}
	\end{proposition}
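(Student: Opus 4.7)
The plan is to unpack $l_n(f_1,\ldots,f_n)(\omega)$ using formula~\eqref{l_n} and to observe that the hypothesis $l_n(f_1,\ldots,f_n)(\omega)\neq 0$ forces at least one summand
\[
\phi_{n-1}(f_1,\ldots,\hat f_i,\ldots,f_n)(1\otimes f_i(\alpha_1\alpha_2)\otimes \alpha_3\otimes 1)
\]
or
\[
\phi_{n-1}(f_1,\ldots,\hat f_i,\ldots,f_n)(1\otimes \alpha_1\otimes f_i(\alpha_2\alpha_3)\otimes 1)
\]
to be nonzero. The two cases are symmetric and will respectively yield configurations (b) and (a) of the statement; I would treat the first in detail and invoke the mirror argument for the second.

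For the first case, I would select a monomial path $u$ appearing in $f_i(\alpha_1\alpha_2)$ that witnesses the nonvanishing. Lemma~\ref{lemma:phi=0-RBR} forces $u\alpha_3=0$; combined with the gentle condition applied to $\alpha_3$ (the unique arrow $\mu$ with $\mu\alpha_3\in I$ is $\alpha_2$), this pins down $u$ to end in $\alpha_2$, so $u=v'\alpha_2$ for some nonzero path $v'=\eta_1\cdots\eta_{m-1}$ with $m\geq 2$, the length-one case being ruled out by the no-loops assumption on $Q$. This is precisely the path of configuration~(b) ending in $\alpha_2$.

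The heart of the argument is then to apply Lemma~\ref{lemma: no parallel} to $\phi_{n-1}(f_1,\ldots,\hat f_i,\ldots,f_n)(1\otimes v'\alpha_2\otimes \alpha_3\otimes 1)$ (the assumption $n\geq 4$ guarantees $n-1\geq 2$, so the lemma applies): the nonvanishing of this expression forces the existence of some index $j$ and a nonzero summand $w$ in $f_j(\alpha_2\alpha_3)$ satisfying $\eta_{m-1}w=0$. The main subtlety, which I expect to be the key step, is to verify that $w$ cannot start with $\alpha_2$: if it did, then $\eta_{m-1}\alpha_2\neq 0$ (since $v'\alpha_2$ is a nonzero path in $A$) together with the fact that every consecutive pair of arrows inside $w$ has nonzero product would force $\eta_{m-1}w\neq 0$, contradicting the conclusion of Lemma~\ref{lemma: no parallel}. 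With this possibility excluded, $w$ begins with some arrow $\mu_1\neq\alpha_2$ satisfying $\eta_{m-1}\mu_1=0$, so $v'w=0$ and the pair $(v'\alpha_2, w)$ realises configuration~(b).

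For the second case, the symmetric argument with the roles of $\alpha_1$ and $\alpha_3$ swapped applies: the offending summand $w$ of $f_i(\alpha_2\alpha_3)$ must start with $\alpha_2$, write $w=\alpha_2w''$ with $w''$ nonzero; Lemma~\ref{lemma: no parallel}, now with $u=\alpha_1$ and $v=w$, yields a nonzero summand $u$ of some $f_j(\alpha_1\alpha_2)$ whose last arrow annihilates the first arrow of $w''$; and the analogous no-overlap observation forbids $u$ from ending in $\alpha_2$, producing configuration~(a). This exclusion-of-overlap step, small though it looks, is precisely what upgrades the one-sided dichotomy in the remark preceding Lemma~\ref{l_2 =0} into the simultaneous existence of paths on both sides of $\alpha_2$ required here.
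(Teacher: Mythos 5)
Your argument is essentially the paper's own proof: expand $l_n(f_1,\ldots,f_n)(\omega)$ via \eqref{l_n}, use Lemma \ref{lemma:phi=0-RBR} together with the gentle condition to force the offending summand of $f_i(\alpha_1\alpha_2)$ (resp.\ of $f_i(\alpha_2\alpha_3)$) to end (resp.\ begin) with $\alpha_2$, and then invoke the contrapositive of Lemma \ref{lemma: no parallel} to produce the second path; the paper does exactly this, only more tersely and with the two cases in the opposite order. One small correction: the length-one case $u=\alpha_2$ is not ``ruled out by the no-loops assumption on $Q$'' --- Proposition \ref{prop 1} makes no acyclicity assumption, that hypothesis only enters in the later results --- but the case is excluded anyway, since for a pair of arrows one has $\phi_{n-1}(\cdots)(1\otimes\alpha_2\otimes\alpha_3\otimes 1)=v_{n-1}(\cdots)H_2(1\otimes\alpha_2\otimes\alpha_3\otimes 1)=0$ for $n-1\geq 2$ (equivalently, Lemma \ref{lemma: no parallel} applies vacuously when $m=r=1$). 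Finally, your ``key step'' showing that $w$ cannot begin with $\alpha_2$ is a harmless detour: the relation $\eta_{m-1}w=0$ furnished by Lemma \ref{lemma: no parallel} already yields $v'w=0$ by left multiplication, which is all that configuration (b) requires.
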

	\begin{proof}
		From  equation \eqref{l_n} we have that $l_n(f_1, \dots, f_n)(\omega)\neq 0$ implies that, for some $i$, 
		\begin{align} \label{caso a}
			\phi_{n-1}(f_1,\ldots, \hat{f}_i, \ldots, f_n)(1\otimes \alpha_1\otimes f_i(\alpha_2\alpha_3)\otimes 1) \neq 0, \mbox{ or } \\ \label{caso b}
			\phi_{n-1}(f_1,\ldots, \hat{f}_i, \ldots , f_n)(1\otimes f_i(\alpha_1\alpha_2)\otimes \alpha_3\otimes 1) \neq 0.
		\end{align}
		If \eqref{caso a} holds, we can affirm that a nonzero path $w= \alpha_2w'$ appears as a summand in $f_i(\alpha_2\alpha_3)$. Applying Lemma \ref{lemma: no parallel} to 
		\begin{align*}
			\phi_{n-1}&(f_1,\ldots, \hat{f}_i, \ldots, f_n)(1\otimes  \alpha_1 \otimes \alpha_2w'\otimes 1)
		\end{align*}
		we get
		that there exists a nonzero path $u$ parallel to $\alpha_1\alpha_2$ such that $uw'=0$. 
		We proceed analogously for \eqref{caso b}. 
	\end{proof} 
	
	In the next example we show that the previous results can not be extended to the case
	of string algebras.
	
	\begin{example}
		Let $A=kQ/I$ be the string algebra given by the quiver
		\[\xymatrix{ 
			& 5\ar[rd]^{\beta_2}\ar[rr]^{\lambda_2}& & 6\ar[d]^{\lambda_3}&\\
			1\ar[ru]^{\beta_1}\ar[r]_{\alpha_1}& 2\ar[u]^{\lambda_1}\ar[r]_{\alpha_2}& 3\ar[r]_{\alpha_3}\ar[ru]^{\mu}&4 }
		\]
		with $I=<\alpha_1\alpha_2, \alpha_2\alpha_3, \lambda_1\beta_2, \beta_1\lambda_2, \beta_2\alpha_3, \alpha_2\mu, \mu\lambda_3>$. Set $$f=(\alpha_1\alpha_2||\beta_1\beta_2)+(\beta_2\alpha_3||\lambda_2\lambda_3)+(\beta_1\lambda_2||\beta_1\beta_2\mu+\alpha_1\lambda_1\lambda_2)+(\mu\lambda_3||\alpha_3).$$  Then, one can check that
		\[l_{3k}(f, \ldots, f)(\alpha_1,\alpha_2,\alpha_3)=c_k \, \alpha_1\lambda_1\lambda_2\lambda_3\neq 0\] 
		for some nonzero $c_k \in \kk$.
	\end{example}
	
	We have seen in Examples \ref{parallel arrows} and \ref{oriented cycles} that gentle algebras with non-nilpotent $L_\infty$-structure can be found
	when the quiver has parallel arrows or oriented cycles.
	
	Now, assuming the hypothesis about no oriented cycles and no parallel arrows, we will find conditions for nilpotency.

	\begin{remark}\label{loop argument}
		In the subsequent proofs, we will use the following argument to show that $$\phi_n(f_1,\ldots, f_n)(1\otimes x_0 \otimes y_0\otimes 1)=0$$
		for any $n\geq 3$. The recursive definition of $\phi_n$ implies that
		$\phi_n(f_1,\ldots, f_n)(1\otimes x_0 \otimes y_0\otimes 1)$ will be expressed in terms of $\phi_r(f_{i_1}\ldots, f_{i_r}) (1\otimes x_t \otimes y_t \otimes 1)$ for $1 \leq r <n$ and for several possible arguments $1\otimes x_t \otimes y_t \otimes 1$. The desired vanishing will follow by the equalities $$\phi_1(f_j)(1\otimes x_t \otimes y_t \otimes 1)=0
		\qquad \mbox{ and } \qquad \phi_2(f_{i_1},f_{i_2})(1\otimes x_t\otimes y_t\otimes 1)=0$$
		for all possible arguments. We will explain in detail this procedure in the following example.
		
	\end{remark}
	\begin{example}\label{ejemplo s=1}
		Let $Q$ be the quiver
		\[ \xymatrix{
			& \bullet\ar[dr]^{\beta_m}\ar@{.>}@/^1.5pc/[drrr]^z &&\\
			1    \ar@{.>}@/^.5pc/[ru]^{\beta_1 \cdots \beta_{m-1}} \ar[rr]_{\alpha_1} &  &  2 \ar[rr]_{\alpha_2}  \ar@/_1.5pc/[rrrr]_{\delta}       &  & 3  \ar[rr]_{\alpha_3}  & & 4
		}\]
		with $\beta_1 \cdots \beta_m \alpha_2$ and $z \alpha_3$ nonzero paths, $\delta$ an arrow and $(\alpha_1,\alpha_2, \alpha_3)\in AP_3$.
		For any $n\geq 4$, we will see that $\phi_{n}(f_1, \ldots, f_n) (1\otimes \beta_1\cdots\beta_m\alpha_2\otimes \alpha_3\otimes 1)=0$.
		We have
		\begin{align}
			\label{argumento1}  \phi_{n}(f_1, \ldots, f_n) &(1\otimes \beta_1\cdots\beta_m\alpha_2\otimes \alpha_3\otimes 1)
			\\ \notag
			&=v_{n}(f_1,\ldots, f_n) (1\otimes \beta_1\cdots\beta_m\otimes \alpha_2\otimes \alpha_3\otimes 1) \\ 
			\notag
			&=(-1)^{n} \sum_{j=1}^n  \phi_{n-1}(f_1,\ldots,\hat{f}_j, \dots, f_n) (1\otimes \beta_1\cdots\beta_m\otimes f_j(\alpha_2 \alpha_3)\otimes 1).
		\end{align}
		Since the unique nonzero path parallel to $\alpha_2\alpha_3$ is $\delta$, the only argument that can appear for $\phi_{n-1}(f_1,\dots,\hat{f}_j, \dots, f_n)$ is 
		\begin{equation*}\label{argumento2}
			1\otimes \beta_1\cdots\beta_m\otimes \delta \otimes 1.
		\end{equation*} 
		Now we proceed,
		\begin{align*} 
			\phi_{n-1}&(f_1,\ldots,\hat{f}_j, \ldots, f_n) (1\otimes \beta_1\cdots \beta_m\otimes \delta\otimes 1)\\
			\notag  &= v_{n-1}(f_1,\ldots,\hat{f}_j, \ldots, f_n) (1\otimes \beta_1\cdots\beta_{m-1}\otimes \beta_m\otimes \delta\otimes 1)\\ 
			&= (-1)^{n-1} \sum_{\substack{k=1 \\k \neq j}}^n \phi_{n-2}(f_1,\ldots,\hat{f}_j,\ldots,\hat{f}_k, \dots, f_n) (1\otimes \beta_1\cdots\beta_{m-1}\otimes f_k(\beta_m \delta) \otimes 1)
		\end{align*}
		and the unique nonzero path parallel to $\beta_m\delta$ is $z \alpha_3$, so the only argument that can appear for $\phi_{n-2}(f_1,\ldots,\hat{f}_j,\ldots,\hat{f}_k, \ldots, f_n)$ is 
		\begin{equation*}\label{argumento3}
			1\otimes \beta_1\cdots\beta_{m-1}\otimes z\alpha_3 \otimes 1.
		\end{equation*}
		Since $Q$ has no oriented cycles, there is no nonzero path from $1$ to $4$ ending with $\alpha_3$.
		Hence, 
		\begin{align*}
			\phi_{n-2}&(f_1,\ldots,\hat{f}_j,\ldots,\hat{f}_k, \ldots, f_n) (1\otimes \beta_1\cdots\beta_{m-1}\otimes z\alpha_3 \otimes 1)\\
			&= v_{n-2}(f_1,\ldots,\hat{f}_j,\ldots,\hat{f}_k, \dots, f_n) (1\otimes \beta_1\cdots\beta_{m-1}\otimes z\otimes \alpha_3 \otimes 1).
		\end{align*}
		By the definition of $v_{n-2}$ this is a linear combination of elements of the form
		\begin{align*}
			\phi_{n-2-t}(f_{i_1},\ldots, f_{i_{n-2-t}})(1\otimes \phi_t(f_{i_{n-1-t}},\ldots, f_{i_{n-2}})(1\otimes \beta_1\cdots\beta_{m-1}\otimes z\otimes 1)\otimes \alpha_3\otimes 1).
		\end{align*}
		The only nonzero path parallel to $\beta_1\cdots\beta_{m-1}z$ is $\beta_1\cdots\beta_m\alpha_2$. Then, the unique nonzero elements that may appear are of the form
		\begin{align*} \phi_{n-2-t}(f_{i_1},\ldots, f_{i_{n-2-t}})(1\otimes \beta_1\cdots\beta_m\alpha_2 \otimes \alpha_3\otimes 1).
		\end{align*}
		As the argument of the last equation coincides with the argument of \eqref{argumento1}, we can affirm that the only possible arguments appearing in $\phi_l (f_{j_1},\ldots, f_{j_l})$ are 
		$$1\otimes \beta_1\cdots\beta_m\alpha_2\otimes \alpha_3\otimes 1, \,  1\otimes \beta_1\cdots\beta_m\otimes \delta\otimes 1,\,  \mbox{ and }  \, 1\otimes \beta_1\cdots\beta_{m-1}\otimes z \alpha_3\otimes 1.$$
		One can check that $\phi_1(f_i)$ and $\phi_2(f_i,f_j)$ vanish  in all of them, therefore $$\phi_{n}(f_1, \ldots, f_n) (1\otimes \beta_1\cdots\beta_m\alpha_2\otimes \alpha_3\otimes 1)=0.$$
	\end{example}

	\
	
	From Proposition \ref{prop 1} we need to study in detail those algebras whose quivers contain subquivers of one of the following two cases:
	\begin{itemize}
		\item [Case A:]
		$(\alpha_1,\alpha_2, \alpha_3)\in AP_3$,  nonzero paths $u, w'$ such that $uw' =0$ 
		\[ \xymatrix{ 
			1   \ar[r]_{\alpha_1} \ar@{.>}@/^0.7pc/[rr]^{u}  &   2 \ar[r]_{\alpha_2}         &   3  \ar[r]_{\alpha_3}  \ar@{.>}@/^0.5pc/[r]^{w'}  & 4
		}, \mbox{ or}\]
		\item [Case B:]
		$(\alpha_1,\alpha_2, \alpha_3)\in AP_3$,
		nonzero paths $v',w$ such that $v'w =0$ 
		\[ \xymatrix{ 
			1   \ar[r]_{\alpha_1} \ar@{.>}@/^0.5pc/[r]^{v'}  &   2 \ar[r]_{\alpha_2}  \ar@{.>}@/^0.7pc/[rr]^{u}       &   3  \ar[r]_{\alpha_3}   & 4.
		} \]
	\end{itemize}
	
	\subsection{Case A}
	In this case we get the following result.
	
	\begin{proposition} \label{proposition A}
		Let $A=\kk Q/I$ be a gentle algebra. Assume $Q$ has no oriented cycles, no parallel arrows and it contains a subquiver of the form  \[ \xymatrix{ 
			1   \ar[r]_{\alpha_1} \ar@{.>}@/^0.7pc/[rr]^{u}  &   2 \ar[r]_{\alpha_2}         &   3  \ar[r]_{\alpha_3}  \ar@{.>}@/^0.5pc/[r]^{w'}  & 4
		}  \]
		with  $\omega=(\alpha_1,\alpha_2, \alpha_3)\in AP_3$, $u,w'$ nonzero paths and $uw'=0$. If $l_n(f_1, \dots, f_n)(\omega)\neq 0$ for some  $n \geq 4$ and for some $f_i \in B^{1}(A)[1], 1 \leq i \leq n$, then  $u= \alpha_1 u'$. 
	\end{proposition}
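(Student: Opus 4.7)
The plan is to prove the contrapositive: assuming $u = \beta_1 \cdots \beta_m$ with $\beta_1 \neq \alpha_1$, show that $l_n(f_1, \ldots, f_n)(\omega) = 0$ for every $n \geq 4$ and every choice of $f_i \in B^1(A)[1]$, which contradicts the hypothesis. Gentleness and the absence of parallel arrows determine the local structure near vertices $1$ and $3$: $\beta_1$ is the unique outgoing arrow at $1$ other than $\alpha_1$, $\beta_m$ is the unique incoming arrow at $3$ other than $\alpha_2$, and condition (b) combined with $\beta_m\delta_1 \in I$ (where $\delta_1$ is the first arrow of $w'$) forces $\beta_m\alpha_3 \neq 0$.

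Expanding $l_n(f_1, \ldots, f_n)(\omega)$ via formula \eqref{l_n} yields a signed sum of terms of the two shapes
\[\phi_{n-1}(\ldots)(1\otimes f_i(\alpha_1\alpha_2)\otimes \alpha_3\otimes 1) \quad\text{and}\quad \phi_{n-1}(\ldots)(1\otimes \alpha_1\otimes f_i(\alpha_2\alpha_3)\otimes 1).\]
By Lemma \ref{lemma:phi=0-RBR}, the only contributing summands of $f_i(\alpha_1\alpha_2)$ are paths $p$ from $1$ to $3$ with $p\alpha_3 = 0$, which gentleness forces to end either in $\alpha_2$ or in $\beta_m$; likewise, the only contributing summands of $f_i(\alpha_2\alpha_3)$ are those of the form $\alpha_2 w'_s$ for some path $w'_s$ from $3$ to $4$. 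Consequently, every argument to $\phi_{n-1}$ lies in an explicit finite list determined by the subquiver.

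The main step is then the loop argument of Remark \ref{loop argument}, patterned closely on Example \ref{ejemplo s=1}. Unwinding the recursion \eqref{eq: vn}, each such $\phi_{n-1}$-evaluation produces further $\phi_k$-terms whose arguments cycle within the same finite list: the no-oriented-cycles hypothesis prevents the creation of nonzero paths that loop back through $\alpha_3$, while the no-parallel-arrows hypothesis forces any path parallel to $\alpha_1\alpha_2$ or $\alpha_2\alpha_3$ arising in a reduction to be one of the few already catalogued. A direct check using Lemmas \ref{lemma:phi=0-RBR} and \ref{lemma: no parallel} then shows that $\phi_1$ and $\phi_2$ vanish on every argument in this list, whence inductively all $\phi_k$ vanish and $l_n(f_1,\ldots,f_n)(\omega) = 0$.

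The main obstacle will be the bookkeeping in the case where a summand $p$ of $f_i(\alpha_1\alpha_2)$ ends in $\beta_m$: the recursion then generates longer intermediate arguments of the form $1\otimes q\otimes \beta_m\otimes\,\cdot\,\otimes 1$, and closure of the cycle requires invoking the no-oriented-cycles hypothesis repeatedly, as in the final steps of the proof of Lemma \ref{l_3 =0}, to rule out nonzero paths that would otherwise spoil the vanishing of an innermost $\phi_1$ or $\phi_2$ evaluation.
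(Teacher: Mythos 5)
Your strategy coincides with the paper's: prove the contrapositive, use \eqref{l_n} together with Lemma \ref{lemma:phi=0-RBR} to restrict which arguments of $\phi_{n-1}$ can occur, and close the recursion with the loop argument of Remark \ref{loop argument} modelled on Example \ref{ejemplo s=1}. Two remarks. First, the ``main obstacle'' you flag is vacuous: by your own observation that $\beta_m\alpha_3\neq 0$, any summand $p$ of $f_i(\alpha_1\alpha_2)$ ending in $\beta_m$ satisfies $p\alpha_3\neq 0$ and is killed outright by Lemma \ref{lemma:phi=0-RBR}, so no intermediate arguments of the form $1\otimes q\otimes\beta_m\otimes\,\cdot\,\otimes 1$ ever arise; the bookkeeping you anticipate there is not where the work is. Second, the entire content of the proof is the identification and closure of the ``explicit finite list'' of arguments, which you assert but do not produce. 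The paper shows that everything reduces to $\phi_{n-1}(\ldots)(1\otimes\alpha_1\otimes\alpha_2 w'\otimes 1)$, that in its $H_2$-expansion only the last term $-v_{n-1}(\ldots)(1\otimes\alpha_1\otimes\alpha_2\delta_1\cdots\delta_{s-1}\otimes\delta_s\otimes 1)$ can survive because all other terms end in $\delta_s\neq\alpha_3$ while the only nonzero path from $1$ to $4$ is $u\alpha_3$, and that the recursion then cycles between exactly two arguments, $1\otimes\alpha_1\otimes\alpha_2 w'\otimes 1$ and $1\otimes\alpha_1\mu_1\otimes\delta_s\otimes 1$, where $\alpha_1\mu_1\cdots\mu_r$ is the unique nonzero path parallel to $\alpha_1\alpha_2\delta_1\cdots\delta_{s-1}$ and only $r=1$ survives. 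That verification is where the hypotheses (no oriented cycles, no parallel arrows, $\beta_1\neq\alpha_1$) are actually spent, and your sketch defers it entirely to ``a direct check''; as written the proposal is a correct plan matching the paper's, but the decisive step remains unproved.
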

	\begin{proof}
		Write $u=  \beta_1 \cdots \beta_m$, $\beta_1 \neq   \alpha_1$ and  $w'= \delta_1 \cdots \delta_s$ with $\beta_m \delta_1=0$. 
		Since $Q$ has no oriented cycles, the unique nonzero path from $1$ to $4$ is 
		$u \alpha_3$.
		From \eqref{l_n} and Lemma \ref{lemma:phi=0-RBR}, we get that $l_n(f_1, \dots, f_n)(\omega)\neq 0$ implies that 
		\begin{equation}
			\label{phi n-1 Ai}
			\phi_{n-1}(f_1,\dots, \hat{f}_i,  \dots,  f_n)(1\otimes \alpha_1 \otimes \alpha_2w' \otimes 1)
		\end{equation}
		is not zero for some $i$. 
		Observe that \eqref{phi n-1 Ai} equals
		\[ - v_{n-1}(f_1,\dots, \hat{f}_i,  \dots,  f_n)(1 \otimes \alpha_1 \otimes \alpha_2 \delta_1  \cdots \delta_{s-1} \otimes \delta_s \otimes 1)\]
		since all the other terms do not end with $\alpha_3$. 
		By definition, this is a linear combination of terms of the form
		\begin{align} \label{phi_n-1-t A}
			\phi_{n-1-t}(f_{i_1},\dots, f_{i_{n-1-t}})(1\otimes\phi_t (f_{i_{n-t}},\dots, f_{i_{n-1}})(1\otimes\alpha_1\otimes \alpha_2  \delta_1\cdots \delta_{s-1}\otimes 1)\otimes \delta_s\otimes 1).
		\end{align}
		Since $Q$ has no oriented cycles, from Lemma \ref{lemma:phi=0-RBR} we get that the unique possibility for the nonvanishing of $\phi_{n-1-t}$ is that there exists a nonzero path $\alpha_1\mu_1\cdots\mu_r$ parallel to $\alpha_1\alpha_2 \delta_1\cdots \delta_{s-1}$
		with $\mu_r\neq\delta_{s-1}$.
		\[
		\xymatrix{
			& & & & & \bullet  \ar[rd]^{\delta_s} \\
			1 \ar[rr]_{\alpha_1} \ar@{.>}@/^2pc/[rrrr]^u & & 2 \ar[rr]_{\alpha_2} \ar@{.>}@/^2pc/[rrru]^{\mu_1 \cdots \mu_r} & & 3 \ar[rr]_{\alpha_3} \ar@{.>}@/^.5pc/[ru]^{\delta_1 \cdots \delta_{s-1}}  & & 4.
		}
		\]
		In this case, as the only nonzero path from $1$ to $4$ starts with $\beta_1$, \eqref{phi_n-1-t A} vanishes for $n-1-t=1$, and, for $n-1-t \geq 2$, it can only be of the form
		\begin{align} \notag
			\phi_{n-1-t}&{(f_{j_{1}},\dots, f_{j_{n-t-1}})}(1\otimes \alpha_1\mu_1\cdots\mu_r\otimes\delta_s\otimes 1)
			\\
			&=v_{n-1-t}{(f_{j_{1}},\dots, f_{j_{n-t-1}})}(1\otimes \alpha_1\mu_1\cdots\mu_{r-1} \otimes\mu_r\otimes\delta_s\otimes 1)\label{case A n-1-t} .
		\end{align}
		For $r>1$, \eqref{case A n-1-t} is zero because there are no nonzero paths parallel to $\mu_r\delta_s$ since  $Q$ has no oriented cycles.  
		Finally, for  $r=1$, the only nonzero path parallel to $\mu_1\delta_s$ is  $\alpha_2w'$.  Therefore,  
		\eqref{case A n-1-t} 
		can only be a linear combination of terms of the form
		\begin{align*}
			\phi_{n-2-t}(f_{k_1},\dots, f_{k_{n-2-t}})(1\otimes\alpha_1\otimes \alpha_2 w'\otimes 1)
		\end{align*}
		whose arguments coincide with the one of \eqref{phi n-1 Ai}. Hence, the only arguments appearing in $\phi_p (f_{t_1},\dots, f_{t_p})$ are $1\otimes \alpha_1\otimes \alpha_2 w' \otimes 1 $ and $1\otimes \alpha_1\mu_1\otimes \delta_s \otimes 1 $. Applying Remark \ref{loop argument} we get that $l_n(f_1,\dots,f_n)({\omega})$ vanishes.
	\end{proof}
	
	\subsection{Case B}
	Assume that each quiver in this subsection has no oriented cycles, no parallel arrows, and 
	contains a subquiver
	\[ \xymatrix{ 
		1   \ar[r]_{\alpha_1} \ar@{.>}@/^0.5pc/[r]^{v'}  &   2 \ar[r]_{\alpha_2}  \ar@{.>}@/^0.7pc/[rr]^{u}       &   3  \ar[r]_{\alpha_3}   & 4
	} \]
	with $\omega =(\alpha_1,\alpha_2, \alpha_3)\in AP_3$, $v'= \beta_1 \dots \beta_m$ and $u=\delta_1 \dots \delta_s$ nonzero paths, and $\beta_m \delta_1 =0$. Observe that, since $Q$ has no double arrows, $m >1$. 
	We start this subsection with a series of technical lemmas concerning the vanishing of $v_n$.
	
	\begin{lemma}\label{lema A}
		With the notation above, if $\delta_s\neq \alpha_3$, then 
		\begin{equation*}
			v_n(f_1,\dots,f_n)(1\otimes \beta_1\cdots\beta_{m-1}\otimes \beta_m\otimes \delta_1\otimes 1)\delta_2\cdots\delta_s =0
		\end{equation*}
		for any $n \geq 3$ or $m \geq 3$. Moreover, for $n=m=2$, if $s>1$ and $\lambda$ is an arrow in $Q$ parallel to  $\beta_2\delta_1$, then there exist $f_1,f_2\in B^{1}(A)[1]$ such that 
		$$v_2(f_1, f_2)(1\otimes \beta_1\otimes \beta_2\otimes \delta_1\otimes 1)\delta_2\cdots\delta_s \neq 0.$$
	\end{lemma}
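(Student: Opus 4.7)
My plan is to expand $v_n(f_1,\dots,f_n)(1\otimes \beta_1\cdots\beta_{m-1}\otimes \beta_m\otimes \delta_1\otimes 1)$ using the definition \eqref{eq: vn} and apply the loop-argument strategy of Remark \ref{loop argument}. The recursion reduces everything to evaluations of $\phi_r$ on a short, explicit list of 2-tensors, on which the standing hypotheses (gentleness, no oriented cycles, no parallel arrows, and $\delta_s\ne\alpha_3$) force vanishing after right-multiplication by $\delta_2\cdots\delta_s$.

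The Gerstenhaber bracket decomposition forces $\phi_r$ to be evaluated on the 2-tensors $(\beta_1\cdots\beta_{m-1},\beta_m)$, $(\beta_m,\delta_1)$, and $(\beta_1\cdots\beta_{m-1},f_i(\beta_m\delta_1))$. The first vanishes by Lemma \ref{lemma:phi=0-RBR} since $\beta_{m-1}\beta_m\ne 0$; the second for $r\ge 2$ vanishes because $H_2(1\otimes\beta_m\otimes\delta_1\otimes 1)$ reduces to a single term killed by the inductive $\phi_r$-analysis, while for $r=1$ it yields $f_i(\beta_m\delta_1)$. The third survives only if a summand of $f_i(\beta_m\delta_1)$ starts with the unique arrow $\eta$ satisfying $\beta_{m-1}\eta=0$ (by gentleness at $s(\beta_m)$), contributing a term of the form $\beta_1\cdots\beta_{m-2}\,f_j(\beta_{m-1}\eta)\,\nu_2\cdots\nu_r$. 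Right-multiplication by $\delta_2\cdots\delta_s$ together with the gentle condition at $s(\delta_2)$ then forces the relevant summand to end with $\delta_1$, spawning a new 2-tensor that feeds back into the recursion. When $m\ge 3$, the next recursive step requires an arrow parallel to $\beta_{m-1}\eta$, contradicting the no-parallel-arrow hypothesis; when $n\ge 3$, the deeper recursion produces inner brackets $[\phi_t,\phi_{n-t}]$ with $t\ge 2$ applied to length-one tensors where $H_2=0$. The assumption $\delta_s\ne\alpha_3$ is crucial at this point to block a recycling branch via a path parallel to $\alpha_2\alpha_3$ sharing vertex $4$ with $u$, which would otherwise reopen the recursion indefinitely.

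For the counterexample at $n=m=2$ with $s>1$ and an arrow $\lambda$ parallel to $\beta_2\delta_1$, gentleness forces $\beta_1\lambda=0$ and $\lambda\delta_2=0$ and ensures $\alpha_1\delta_1\ne 0$. Taking $f_1=(\beta_1\lambda\,||\,\alpha_1\delta_1)$ and $f_2=(\beta_2\delta_1\,||\,\lambda)$, direct expansion of $v_2=[\phi_1,\phi_1]$ on $1\otimes\beta_1\otimes\beta_2\otimes\delta_1\otimes 1$ yields, up to sign, the single surviving term $-\phi_1(f_1)(\beta_1,\phi_1(f_2)(\beta_2,\delta_1))=-\phi_1(f_1)(\beta_1,\lambda)=-\alpha_1\delta_1$, and its product with $\delta_2\cdots\delta_s$ is $-\alpha_1\delta_1\cdots\delta_s\ne 0$. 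The main obstacle is the combinatorial bookkeeping in the first part: correctly enumerating all iterated Gerstenhaber bracket terms, confirming that the recursion closes in a finite loop of 2-tensor arguments (with $\delta_s\ne\alpha_3$ playing the decisive structural role), and verifying vanishing of the $\phi_1,\phi_2$ base cases on each.
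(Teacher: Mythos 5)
Your reduction via Remark \ref{rmk 4.6} and your treatment of the case $n=m=2$ coincide with the paper's: the same $f_1=(\beta_1\lambda\,||\,\alpha_1\delta_1)$, $f_2=(\beta_2\delta_1\,||\,\lambda)$, and the same one-line evaluation of $v_2$, so the nonvanishing half is fine. The vanishing half, however, has genuine gaps exactly where the substance of the lemma lies. First, your argument for $m\ge 3$ is not valid: an arrow parallel to the length-two path $\beta_{m-1}\eta$ is not excluded by the hypothesis that $Q$ has no parallel arrows (that hypothesis forbids two \emph{arrows} sharing source and target, nothing more), so no contradiction arises there. What actually closes this case in the paper is the loop argument of Remark \ref{loop argument}: the recursion cycles between the arguments $1\otimes\beta_1\cdots\beta_{m-1}\otimes\lambda\otimes 1$ and $1\otimes\beta_1\cdots\beta_{m-2}\otimes z\lambda\otimes 1$ (with $z$ a \emph{path}, not an arrow, parallel to $\beta_{m-1}$), and the vanishing ultimately rests on the fact that $\delta_s\ne\alpha_3$ makes $\alpha_1 u$ the unique nonzero path from $1$ to $4$, so nothing beginning with $\beta_1$ survives the right multiplication by $\delta_2\cdots\delta_s$.

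Second, the branch in which the surviving summand $\lambda_1\cdots\lambda_h$ of $f_i(\beta_m\delta_1)$ has length $h>1$ is not addressed at all, and it needs a separate argument: in the paper it produces terms $\phi_{n-1-t}(\dots)(1\otimes x_t\otimes\lambda_h\otimes 1)$ with $x_t=\phi_t(\dots)(1\otimes\beta_1\cdots\beta_{m-1}\otimes\lambda_1\cdots\lambda_{h-1}\otimes 1)$, whose only possible nonzero value passes through a path of the form $\beta_1\cdots\beta_m\alpha_2\nu_1\cdots\nu_p$, and the final vanishing uses that any summand of $f_k(\nu_p\lambda_h)$ ending with $\lambda_h$ or with $\delta_1$ would create an oriented cycle. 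Your claim that for $n\ge 3$ the recursion bottoms out in ``inner brackets applied to length-one tensors where $H_2=0$'' is also inaccurate: $H_2(1\otimes\alpha\otimes\beta\otimes 1)$ is nonzero when $\alpha\beta=0$; the relevant fact is the $t\ge 2$ vanishing in Remark \ref{rmk 4.6}, which enters in the very first reduction rather than as the terminal case of the recursion. In short, the strategy is the right one and the counterexample is complete, but the combinatorial core of the vanishing statement --- which you yourself flag as the main obstacle --- is either missing or argued incorrectly.
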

	\begin{proof}
		{From Remark \ref{rmk 4.6} we have that 
			\begin{align}
				v_n (f_1, \dots, &f_n)(1 \otimes \beta_1\cdots\beta_{m-1} \otimes \beta_m  \otimes  \delta_1\otimes 1) \delta_2\cdots\delta_s = \notag \\  \label{vn A}
				& (-1)^{n+1} \sum \phi_{n-1}(f_1,\dots,\hat{f}_i,\dots,f_n)(1\otimes \beta_1\cdots\beta_{m-1}\otimes f_i(\beta_m\delta_1)\otimes 1) \delta_2\cdots\delta_s .
		\end{align}}
		Since $\delta_s \neq \alpha_3$, the unique nonzero path from $1$ to $4$ is 
		$\alpha_1 u$. 
		The case $s=1$ was considered in some of the steps developed in Example \ref{ejemplo s=1}. 
		
		Assume $s>1$. If $n\geq 3$,  equation \eqref{vn A} could be nonzero only if there exists a path $\lambda_1\dots\lambda_h$  parallel to $\beta_m\delta_1$. 
		
		Consider $h=1$. For $m=2$, \eqref{vn A} vanishes.  For $m \geq 3$, arguing as in Example \ref{ejemplo s=1}
		one can check  that the  only arguments appearing in $\phi_t (f_{j_1},\dots, f_{j_t})$ are $1\otimes \beta_1\dots \beta_{m-1}\otimes \lambda \otimes 1 $ and $1\otimes \beta_1\dots \beta_{m-2}\otimes z \lambda \otimes 1 $, where $z$ is a nonzero path parallel to $\beta_{m-1}$. Applying Remark \ref{loop argument} we get that \eqref{vn A} vanishes.

		Assume $h>1$.  In this case, \eqref{vn A} is a linear combination of terms of the form 
		\begin{align}\label{h>1}
			\phi_{n-1-t}(f_{j_1},\dots, f_{j_{n-1-t}})( 1 \otimes x_t \otimes \lambda_h \otimes 1)\delta_2\cdots\delta_s, \mbox{ with }\\
			\notag
			x_t= \phi_t(f_{j_{n-t}},\dots, f_{j_{n-1}}) (1\otimes \beta_1 \cdots \beta_{m-1} \otimes \lambda_1\cdots \lambda_{h-1}\otimes 1).
		\end{align}
		If $\lambda_1=\beta_m$, they vanish. Assume $\lambda_1\neq \beta_m$.
		Since $Q$ has no oriented cycles, the only nonzero path parallel to $\beta_1 \cdots \beta_{m-1} \lambda_1\cdots \lambda_{h-1}$ could be $\beta_1\cdots\beta_m\alpha_2 \nu_1\cdots\nu_p$: 
		\[\xymatrix{
			& & \bullet  \ar[rd]^{\lambda_h}\\
			& \bullet \ar[rd]^{\beta_m} \ar@{.>}@/^1pc/[ru]^{\lambda_1 \cdots \lambda_{h-1}} & & \bullet  \ar@{.>}@/^1pc/[rrrd]^{\delta_2 \cdots \delta_s}\\
			1 \ar[rr]_{\alpha_1} \ar@{.>}@/^1pc/[ru]^{\beta_1 \cdots \beta_{m-1}} & & 2 \ar[rr]_{\alpha_2} \ar[ru]^{\delta_1}& & 3  \ar@{.>}@/_2pc/[lluu]_{\nu_1\cdots\nu_p}\ar[rr]_{\alpha_3}& & 4.
		}
		\]
		If $\nu_p=\lambda_{h-1}$ or $n-1-t=1$, then \eqref{h>1} vanishes. 
		If not, then \eqref{h>1} can only have terms of the form 
		\[ v_d(f_{j_1},\dots, f_{j_d})(1\otimes\beta_1\cdots\beta_m\alpha_2\nu_1\cdots\nu_{p-1}\otimes \nu_p\otimes\lambda_h\otimes 1)\delta_2\cdots\delta_s\]
		with $d\geq 2$, which is equal to a sum of terms of the form
		$$(-1)^{d+1}\phi_{d-1}(f_{k_1},\dots, f_{k_{d-1}})(1\otimes\beta_1\cdots\beta_m\alpha_2\nu_1\cdots\nu_{p-1}\otimes f_k(\nu_p\lambda_h)\otimes 1)\delta_2\cdots\delta_s.$$
		Now, whether $f_k(\nu_p\lambda_h)$ ends with $\lambda_h$ or with $\delta_1$, we have cycles, which is a contradiction. Hence \eqref{vn A} vanishes for $n \geq 3$.
		
		If $n=2$ and $m>2$, then \eqref{vn A} is zero since there is no nonzero path from $1$ to $4$ starting with $\beta_{1}$.
		Finally, if $n=2$ and $m=2$, then \eqref{vn A}  
		is zero unless there exists a path $\lambda_1\dots \lambda_h$ appearing as a summand in $f_i(\beta_2\delta_1)$. It also vanishes if $h >1$ since
		$$\phi_1(f_{j})(1\otimes \beta_1\otimes \lambda_1\dots \lambda_h\otimes 1)=f_j(\beta_1\lambda_1)\lambda_2\cdots\lambda_h$$
		and $\lambda_h\delta_2=0$. 
		If $h=1, f_1=(\beta_1 \lambda_1||\alpha_1 \delta_1)$ and $ f_2=(\beta_2 \delta_1||\lambda_1)$, then  
		\[v_2(f_1, f_2)(1\otimes \beta_1\otimes \beta_2\otimes \delta_1
		\otimes 1)\delta_2\cdots\delta_s = \alpha_1 u \neq 0\]
		and the existence of $\lambda, \delta_s$ and $\alpha_3$ implies $s>1$.
		We have proved that, except for $n=m=2$, \eqref{vn A} vanishes and the lemma follows.
	\end{proof}

	\begin{lemma}\label{lema s=2}
		With the notation above,  if $s=2$, then
		\begin{equation*}
			v_n(f_1,\dots,f_n)(1\otimes \beta_1\cdots\beta_{m}\otimes \delta_1 \otimes \delta_2\otimes 1)=0
		\end{equation*}
		for any $n\geq2$.
	\end{lemma}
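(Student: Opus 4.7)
The plan is to unpack $v_n$ via its recursive definition, reduce using Lemma~\ref{lemma:phi=0-RBR} and Lemma~\ref{lema A}, and close with the loop argument of Remark~\ref{loop argument}. First I would write
\[
v_n(f_1,\dots,f_n)(1\otimes\beta_1\cdots\beta_m\otimes\delta_1\otimes\delta_2\otimes 1)=\sum_{t,\tau}\chi(\tau)\,\kappa(\tau)_t\,[\phi_t,\phi_{n-t}]\hat\tau
\]
and expand each Gerstenhaber bracket. Because $u=\delta_1\delta_2\neq 0$, Lemma~\ref{lemma:phi=0-RBR} kills every summand that applies some $\phi_k$ to $1\otimes\delta_1\otimes\delta_2\otimes 1$; only terms of the form $\pm\phi_t(\cdots)(1\otimes X_{n-t}\otimes\delta_2\otimes 1)$ survive, where $X_k:=\phi_k(\cdots)(1\otimes\beta_1\cdots\beta_m\otimes\delta_1\otimes 1)$. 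For $k\geq 2$, since $\beta_m\delta_1=0$ and $\delta_1$ is a single arrow, $H_2$ reduces $X_k$ to $v_k(\cdots)(1\otimes\beta_1\cdots\beta_{m-1}\otimes\beta_m\otimes\delta_1\otimes 1)$, precisely the object controlled by Lemma~\ref{lema A}.

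Next, using gentleness with the no-parallel-arrows and no-oriented-cycles hypotheses, I would show that every nonzero summand of $X_k$ has the form $\beta_1\cdots\beta_m\alpha_2\,r$, with $r$ a nonempty path from $3$ to $v'':=t(\delta_1)$: the unique nonzero continuation of $\beta_{m-1}$ is $\beta_m$, and the unique nonzero continuation of $\beta_m$ is $\alpha_2$ (since $\beta_m\delta_1=0$ and vertex $2$ has at most two outgoing arrows), while $v''\neq 3$, as otherwise $\delta_1$ and $\alpha_2$ would be parallel arrows $2\to 3$. Feeding such a summand into $\phi_t(\cdots)(1\otimes\,\cdot\,\otimes\delta_2\otimes 1)$ then forces, by gentleness at $v''$, the last arrow of $r$ to be the unique in-arrow $\sigma$ of $v''$ with $\sigma\delta_2=0$. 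The loop argument of Remark~\ref{loop argument} now applies: the recursion produces only finitely many distinct arguments of the form $1\otimes x_l\otimes y_l\otimes 1$, and $\phi_1(f)$ and $\phi_2(f_1,f_2)$ vanish on each. For the arguments $1\otimes\beta_1\cdots\beta_{m-1}\otimes\beta_m\otimes\delta_1\otimes 1$ (multiplied externally by $\delta_2$) the vanishing is Lemma~\ref{lema A}, since $\delta_s=\delta_2\neq\alpha_3$; for the arguments $1\otimes\beta_1\cdots\beta_m\alpha_2\,r'\otimes\delta_2\otimes 1$ the absence of oriented cycles forces any concatenation with $f(\sigma\delta_2)$ to vanish in $A$.

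The main obstacle is the base case $n=m=2$, where Lemma~\ref{lema A} is silent. There I would compute
\[
v_2(f_1,f_2)(1\otimes\beta_1\beta_2\otimes\delta_1\otimes\delta_2\otimes 1)=\phi_1(f_1)(1\otimes Y_1^{(2)}\otimes\delta_2\otimes 1)+\phi_1(f_2)(1\otimes Y_1^{(1)}\otimes\delta_2\otimes 1)
\]
with $Y_1^{(i)}=\beta_1 f_i(\beta_2,\delta_1)$, expand $f_i(\beta_2\delta_1)$ in the basis of nonzero parallel paths, and check directly that each surviving summand $\beta_1\beta_2\alpha_2\,r_i'\sigma\,f_j(\sigma,\delta_2)$ either vanishes on basis grounds or produces a closed walk ending at vertex $4$ and reaching back through $\sigma$, contradicting the no-oriented-cycles hypothesis. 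Once this base case is handled, the recursive reductions above complete the induction on $n$.
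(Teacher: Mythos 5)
Your overall strategy --- expand $v_n$ via the recursive definition, discard the terms involving $\phi_k(1\otimes\delta_1\otimes\delta_2\otimes 1)$ using Lemma \ref{lemma:phi=0-RBR} and $\delta_1\delta_2\neq 0$, and run an induction on $n$ closed by the loop argument of Remark \ref{loop argument} --- is the same as the paper's. Your base case $n=2$ is essentially correct, and can be stated more simply: the output is a path from $1$ to $4$ beginning with $\beta_1$, while the unique nonzero such path is $\alpha_1\delta_1\delta_2$.

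The genuine gap is in the inductive step, where you delegate the vanishing of $v_k(\cdots)(1\otimes\beta_1\cdots\beta_{m-1}\otimes\beta_m\otimes\delta_1\otimes 1)\delta_2$ to Lemma \ref{lema A}. That lemma is silent exactly when $k=m=2$; worse, its ``moreover'' part exhibits $f_1,f_2$ making this expression equal to $\alpha_1 u\neq 0$ whenever there is an arrow $\lambda$ parallel to $\beta_2\delta_1$. This configuration does arise \emph{inside} the recursion for $n\geq 5$: the chain of arguments passes through $1\otimes v'\alpha_2\mu_1\otimes\delta_2\otimes 1$, then (via $f(\mu_1\delta_2)\ni\alpha_3$) through $1\otimes v'\alpha_2\otimes\alpha_3\otimes 1$, then (via $f(\alpha_2\alpha_3)\ni\delta_1\delta_2$) through $1\otimes v'\otimes\delta_1\delta_2\otimes 1$, whose $H_2$-expansion produces precisely $v_{k}(\cdots)(1\otimes\beta_1\cdots\beta_{m-1}\otimes\beta_m\otimes\delta_1\otimes 1)\delta_2$ alongside the term covered by your inductive hypothesis. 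Your list of recurring arguments omits the last two, and your separate base-case computation concerns $v_2$ applied to the degree-three argument $1\otimes\beta_1\beta_2\otimes\delta_1\otimes\delta_2\otimes 1$, which is a different expression and does not cover this inner occurrence. The paper closes the hole with a configurational observation you need: the dangerous argument only appears when there is an arrow $\mu_1$ from $3$ to $s(\delta_2)$, so the vertex $t(\delta_1)$ already carries its two permitted in-arrows $\delta_1$ and $\mu_1$; consequently every nonzero path parallel to $\beta_m\delta_1$ must begin with $\beta_m$, and the term dies by Lemma \ref{lemma:phi=0-RBR} because no summand $w$ of $f(\beta_m\delta_1)$ satisfies $\beta_{m-1}w=0$. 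Without this (or an equivalent) argument the induction does not close. A smaller imprecision: the summands of $X_k$ need not all have the form $\beta_1\cdots\beta_m\alpha_2\,r$ --- for $m=2$ the summand $\alpha_1\delta_1$ can occur --- though those are harmless since $\delta_1\delta_2\neq 0$.
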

	
	\begin{proof}
		We proceed by induction on $n$. 
		Since $Q$ has no double arrows and no oriented cycles, $\delta_2 \neq \alpha_3$ and $\alpha_1 \delta_1 \delta_2$ is the unique nonzero path from $1$ to $4$. By definition 
		\[v_n(f_1,\ldots,f_n)(1\otimes \beta_1\cdots\beta_{m}\otimes \delta_1 \otimes \delta_2\otimes 1)\] is a linear combination of terms of the form
		\begin{align}\label{vn s2}
			\phi_{n-t}(f_{i_1},\ldots,f_{i_{n-t}})(1\otimes \phi_t(f_{i_{n-t+1}}, \ldots , f_{i_n})(1 \otimes  \beta_1\cdots\beta_{m}\otimes \delta_1 \otimes 1) \otimes \delta_2\otimes 1).
		\end{align} 
		For $n=2$, we have that \eqref{vn s2}
		starts with $\beta_1$, thus it vanishes.
		For $n\geq 3$, the possible nonzero paths parallel to $\beta_1\cdots\beta_m\delta_1$ are $\alpha_1\delta_1$ and
		$\beta_1\cdots\beta_m\alpha_2\mu_1\cdots\mu_h$, where $\mu_1\cdots\mu_h$ is a path from $3$ to $s(\delta_2)$:
		\[\xymatrix{
			& \bullet \ar[rd]^{\beta_m}  & & & \bullet  \ar[rrd]^{\delta_2}\\
			1 \ar[rr]_{\alpha_1} \ar@{.>}@/^1pc/[ru]^{\beta_1 \cdots \beta_{m-1}} & & 2 \ar[rr]_{\alpha_2} \ar[rru]^{\delta_1}& & 3  \ar@{.>}[u]_{\mu_1 \cdots \mu_h}\ar[rr]_{\alpha_3}& & 4.
		}
		\]
		Since $\delta_1\delta_2\neq 0$, then \eqref{vn s2} can only be a linear combination of terms of the form
		\begin{equation*}
			\phi_{n-t}(f_{j_1}, \dots,f_{j_{n-t}})(1\otimes \beta_1\cdots\beta_m\alpha_2\mu_1\cdots\mu_h \otimes \delta_2\otimes 1).
		\end{equation*}
		This equals zero for $n-t=1$ and, for $n-t >1$, it equals
		\begin{equation*}
			v_{n-t}(f_{j_1}, \dots,f_{j_{n-t}})(1\otimes \beta_1\cdots\beta_m\alpha_2\mu_1\cdots\mu_{h-1}\otimes \mu_h \otimes \delta_2\otimes 1).
		\end{equation*}
		Now, concerning the existence of nonzero paths parallel to $\mu_h\delta_2$, the only possibility is $\alpha_3$ when $h=1$, since $Q$ has no oriented cycles.
		For $n-t=2$, we get a path starting in $\beta_1$, hence it vanishes. For $n-t>2$, only the terms of the form
		\begin{align*}
			\phi_{n-t-1}&(f_{k_1}, \dots,f_{k_{n-t-1}})(1\otimes \beta_1\cdots\beta_m\alpha_2\otimes \alpha_3\otimes 1)\\
			&= v_{n-t-1}(f_{k_1}, \dots,f_{k_{n-t-1}})(1\otimes \beta_1\cdots\beta_m \otimes \alpha_2\otimes \alpha_3\otimes 1) 
		\end{align*}
		may remain. Since $\phi_{p}(f_{l_1}, \ldots, f_{l_{p}}) (1 \otimes  \alpha_2 \otimes \alpha_3 \otimes 1)=0$ for any $p>1$, and $\delta_1 \delta_2$ is the only nonzero path parallel to $\alpha_2 \alpha_3$,
		the previous expression can only be a linear combination of terms of the form 
		\[ \phi_{n-t-2}(f_{l_1}, \ldots, f_{l_{n-t-2}}) (1 \otimes  \beta_1\cdots\beta_m \otimes \delta_1 \delta_2 \otimes 1). \]
		This is zero if $n-t-2=1$, and it is equal to
		\begin{align*}
			&v_{n-t-2} (f_{l_1}, \dots,f_{l_{n-t-2}})(1\otimes \beta_1\cdots\beta_{m-1}\otimes \beta_m\otimes \delta_1\otimes 1)\delta_2 \\ 
			& - v_{n-t-2} (f_{l_1}, \dots,f_{l_{n-t-2}})(1\otimes \beta_1\cdots\beta_m \otimes \delta_1\otimes \delta_2\otimes 1)
		\end{align*}
		if $n-t-2>1$.
		The first term vanishes because the only nonzero path parallel to $\beta_m\delta_1$ starts with $\beta_m$, since $t(\beta_m)=t(\alpha_1)$ and $t(\delta_1) = t(\mu_1)$. The second term vanishes by inductive hypothesis. Hence the result follows.
	\end{proof}

	\begin{lemma}\label{lema B}
		With the notation above, if $n\geq2$ and $s>2$, then
		\begin{equation*}
			v_n(f_1,\dots,f_n)(1\otimes v'\otimes \delta_1\cdots \delta_{i-1} \otimes \delta_i\otimes 1)\delta_{i+1}\cdots\delta_s =0
		\end{equation*}
		for every $i$. 
	\end{lemma}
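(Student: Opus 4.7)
The proof strategy is induction on $n\geq 2$, modeled on the proof of Lemma~\ref{lema s=2} but threaded with the extra trailing factor $\delta_{i+1}\cdots\delta_s$. The starting reduction, common to every $n$, is that $\delta_1\cdots\delta_i$ is a sub-path of the nonzero path $u$, hence nonzero. Lemma~\ref{lemma:phi=0-RBR} therefore yields
\[
\phi_t(f_{j_1},\dots,f_{j_t})(1\otimes \delta_1\cdots\delta_{i-1}\otimes \delta_i\otimes 1)=0
\]
for every $t\geq 1$. Consequently, in the expansion of $v_n(f_1,\dots,f_n)(1\otimes v'\otimes \delta_1\cdots\delta_{i-1}\otimes \delta_i\otimes 1)$ as a linear combination of $[\phi_t,\phi_{n-t}]$-brackets applied to the 3-tensor, the only summands that can survive are those of the form
\[
\phi_{n-t}(\dots)\bigl(1\otimes \phi_t(\dots)(1\otimes v'\otimes \delta_1\cdots\delta_{i-1}\otimes 1)\otimes \delta_i\otimes 1\bigr).
\]

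For the base case $n=2$, I would unfold the inner factor explicitly via $G_2$, obtaining $\phi_1(f)(1\otimes v'\otimes \delta_1\cdots\delta_{i-1}\otimes 1)=\beta_1\cdots\beta_{m-1}f(\beta_m\delta_1)\delta_2\cdots\delta_{i-1}$, apply the outer $\phi_1$ paired with $\delta_i$, and multiply by $\delta_{i+1}\cdots\delta_s$; forcing a nonzero contribution would require a path from $1$ to $t(\delta_s)$ incompatible with the acyclicity of $Q$, so the expression vanishes, exactly as in the $n=2$ portion of Lemma~\ref{lema s=2}. For the inductive step $n\geq 3$, expand the inner $\phi_t=v_t\circ H_2$; since $\beta_m\delta_1=0$ we have $\Theta(\beta_m,\delta_1)=1$, and the expansion produces
\[
v_t(\dots)(1\otimes \beta_1\cdots\beta_{m-1}\otimes \beta_m\otimes \delta_1\otimes 1)\,\delta_2\cdots\delta_{i-1}
\]
together with $-\sum_{j=2}^{i-1}v_t(\dots)(1\otimes v'\otimes \delta_1\cdots\delta_{j-1}\otimes \delta_j\otimes 1)\,\delta_{j+1}\cdots\delta_{i-1}$. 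The summands in this latter sum, once carried through the outer $\phi_{n-t}$ and combined with the trailing $\delta_i\cdots\delta_s$, match instances of the lemma with $t<n$, so they vanish by the inductive hypothesis. The single term on the first display is handled by Lemma~\ref{lema A} whenever $\delta_s\neq \alpha_3$, and by an acyclicity argument analogous to the final step of Lemma~\ref{lema s=2} when $\delta_s=\alpha_3$.

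The main obstacle will be the careful bookkeeping of the recursion: verifying that the $v_t$-terms produced by the $H_2$-expansion, once the outer $\phi_{n-t}$ is applied and the surviving path is multiplied by $\delta_{i+1}\cdots\delta_s$, really match instances of the lemma covered by the inductive hypothesis. The secondary delicate point is the exceptional case $\delta_s=\alpha_3$, where Lemma~\ref{lema A} does not apply directly and one must enumerate by hand the admissible nonzero paths of $Q$ parallel to the intermediate $\beta_1\cdots\beta_{m-1}\beta_m\delta_1\cdots$ supports, as in Example~\ref{ejemplo s=1}, to conclude that all residual contributions vanish.
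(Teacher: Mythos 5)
Your proposed induction is on $n$, whereas the paper's proof of this lemma proceeds by induction on the position index $i$; that by itself would be fine, but the way you close the recursion has a genuine gap. The critical point is the term you call ``the single term on the first display,'' namely $v_t(\dots)(1\otimes \beta_1\cdots\beta_{m-1}\otimes\beta_m\otimes\delta_1\otimes 1)\delta_2\cdots\delta_{i-1}$, which you claim is ``handled by Lemma \ref{lema A}.'' Lemma \ref{lema A} asserts the vanishing of this $v_t$-output after right multiplication by the \emph{entire} trailing path $\delta_2\cdots\delta_s$, and only for $t\geq 3$ or $m\geq 3$ (its ``moreover'' part exhibits nonvanishing for $t=m=2$, a case your sum over $t$ does not exclude). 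In your recursion the inner output is not multiplied straight through to $\delta_s$: it is truncated at $\delta_{i-1}$ and then consumed by the outer $\phi_{n-t}(\dots)(1\otimes -\otimes\delta_i\otimes 1)$. For $i>2$ every surviving summand ends in $\delta_{i-1}$ and dies by Lemma \ref{lemma:phi=0-RBR} because $\delta_{i-1}\delta_i\neq 0$ (so Lemma \ref{lema A} is not what is doing the work), but for $i=2$ there is no trailing factor at all, and the summands of $v_t(\dots)(1\otimes\beta_1\cdots\beta_{m-1}\otimes\beta_m\otimes\delta_1\otimes 1)$ of the form $v'\alpha_2\mu_1\cdots\mu_h$ with $\mu_h\delta_2=0$ are exactly the ones that Lemma \ref{lema A} cannot see (they are annihilated by right multiplication by $\delta_2$, so they contribute nothing to the quantity that lemma controls) and yet they feed nontrivially into $\phi_{n-t}(\dots)(1\otimes v'\alpha_2\mu_1\cdots\mu_h\otimes\delta_2\otimes 1)\delta_3\cdots\delta_s$. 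This is precisely where the paper invests almost all of its effort: its base case $i=2$ enumerates the nonzero paths parallel to $v'\delta_1$ (namely $\alpha_1\delta_1$ and $v'\alpha_2\mu_1\cdots\mu_h$), descends further through auxiliary paths $\nu_1\cdots\nu_p$ from $s(\mu_h)$ to $t(\delta_2)$, and closes the resulting cycle of arguments with Remark \ref{loop argument}, as in Example \ref{ejemplo s=1}. Your proposal contains no substitute for this analysis.

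A second instance of the same confusion affects your treatment of the $H_2$-sum. For $j=i-1$ the inductive hypothesis gives the vanishing of $v_t(\dots)(1\otimes v'\otimes\delta_1\cdots\delta_{i-2}\otimes\delta_{i-1}\otimes 1)\delta_i\cdots\delta_s$, i.e.\ of the straight right multiplication; it does not give the vanishing of $\phi_{n-t}(\dots)(1\otimes v_t(\dots)(\cdots)\otimes\delta_i\otimes 1)\delta_{i+1}\cdots\delta_s$, because summands of the inner expression that are killed by right multiplication by $\delta_i$ can still contribute through the outer $\phi_{n-t}$. (For $j<i-1$ your conclusion is correct, but the reason is Lemma \ref{lemma:phi=0-RBR} applied to paths ending in $\delta_{i-1}$, not the inductive hypothesis.) Finally, in your base case $n=2$ the appeal to acyclicity alone is insufficient, since $\alpha_1 u$ is a nonzero path from $1$ to $t(\delta_s)=4$; what one actually needs is that the unique nonzero path from $1$ to $4$ begins with $\alpha_1\neq\beta_1$.
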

	\begin{proof}
		We proceed by induction on $i$. Let $i=2$.
		By definition, 
		\[v_n(f_1,\dots,f_n)(1\otimes v'\otimes \delta_1\otimes \delta_2\otimes 1)\delta_{3}\cdots\delta_s\]
		is a linear combination of terms of the form
		\begin{align}\label{eq term BB}
			\phi_{n-t}(f_{j_1},\dots, f_{j_{n-t}})(1\otimes \phi_t(f_{j_{n+1-t}},\dots, f_{j_{n}})(1\otimes v'\otimes\delta_1\otimes 1)\otimes \delta_2\otimes 1)\delta_{3}\cdots\delta_s.
		\end{align}
		The only possible nonzero paths parallel to $v'\delta_1$ are $\alpha_1\delta_1$ and $v'\alpha_2\mu_1\cdots\mu_h$, where $\mu_1\cdots\mu_h$ is a nonzero path from $3$ to $s(\delta_2)$. 
		Since $\delta_1 \delta_2 \neq 0$,  \eqref{eq term BB} can only be a linear combination of terms of the form 
		\[ \phi_{n-t}(f_{j_1},\dots, f_{j_{n-t}})(1\otimes v'\alpha_2\mu_1\cdots\mu_h \otimes \delta_2\otimes 1)\delta_{3}\cdots\delta_s. \]
		For $n-t=1$,  this term vanishes since there is no nonzero path from $1$ to $4$ starting on $\beta_1$. For $n-t>1$, it is equal to
		\[ v_{n-t}(f_{j_1},\dots, f_{j_{n-t}})(1\otimes v'\alpha_2\mu_1\cdots\mu_{h-1}\otimes \mu_h \otimes \delta_2\otimes 1)\delta_{3}\cdots\delta_s.\]
		Now,  one can check that for $h=1$, this last equation equals zero since the only nonzero path parallel to $\mu_1\delta_2$ starts with $\mu_1$. 
		For $h>1$, we can only have terms of the form
		\[\phi_{n-1-t}(f_{k_1},\dots, f_{k_{n-1-t}})(1\otimes v'\alpha_2\mu_1\cdots\mu_{h-1} \otimes \nu_1\cdots\nu_p \otimes 1)\delta_{3}\dots\delta_s\]
		where $\nu_1\cdots\nu_p$ is a nonzero path from $s(\mu_h)$ to $t(\delta_2)$:
		\[ \xymatrix{ & & &  &\bullet \ar[r]^{\delta_2} & \bullet \ar@{.>}@/^1pc/[rdd]^{\delta_3 \cdots \delta_s} & \\
			& & & & \bullet  \ar[u]^{\mu_h} \ar@{.>}@/_1pc/[ru]_{\nu_1\cdots\nu_p}& & &\\
			1   \ar@{.>}@/^1pc/[rr]^{v'} \ar[rr]_{\alpha_1} & &  2 \ar[rr]_{\alpha_2} \ar@/^1pc/[rruu]^{\delta_1}   &  &  3   \ar@{.>}[u]^{\mu_1 \cdots \mu_{h-1}} \ar[rr]_{\alpha_3} & & 4.}
		\]
		Arguing as in Example \ref{ejemplo s=1}
		one can check  that, for $p=1$, the  only possible arguments appearing in $\phi_c (f_{l_1},\dots, f_{l_c})$ are $1\otimes v'\alpha_2\mu_1\cdots\mu_{h-1}\otimes \nu_1 \otimes 1 $ and $1\otimes v'\alpha_2\mu_1\cdots\mu_{h-2}\otimes y \nu_1 \otimes 1$,  where $y$ is a nonzero path parallel to $\mu_{h-1}$, and for $p>1$, the  only possible arguments are $1\otimes v'\alpha_2\mu_1\cdots\mu_{h-1}\otimes \nu_1 \ldots \nu_p \otimes 1 $,  $1\otimes v'\alpha_2\mu_1\cdots\mu_{h}z\otimes \nu_p \otimes 1$, and $1\otimes v'\alpha_2\mu_1\cdots\mu_{h}\otimes \delta_2 \otimes 1 $, where $z$ is a nonzero path from $t(\mu_h)$ to $s(\nu_p)$. Applying Remark \ref{loop argument} we get that \eqref{eq term BB} vanishes.
		
		Let $i>2$. By definition, 
		\[v_n(f_1,\dots,f_n)(1\otimes v'\otimes \delta_1\cdots \delta_{i-1} \otimes \delta_i\otimes 1)\delta_{i+1}\cdots\delta_s\]
		is a linear combination of terms of the form
		\begin{align}\label{eq term BBB}
			\phi_{n-t}(f_{j_1},\dots, f_{j_{n-t}})(1\otimes \phi_t(f_{j_{n+1-t}},\dots, f_{j_{n}})(1\otimes v'\otimes\delta_1 \cdots \delta_{i-1}\otimes 1)\otimes \delta_i\otimes 1)\delta_{i+1}\cdots\delta_s.
		\end{align}
		If $t=1$, $\phi_1(f_{k_{1}})(1\otimes v'\otimes\delta_1 \cdots \delta_{i-1}\otimes 1)$ is a path that ends with $\delta_{i-1}$, and in this case \eqref{eq term BBB} vanishes by Lemma \ref{lemma:phi=0-RBR}. If $t>1$, the terms in
		\[
		\phi_t(f_{j_{n+1-t}},\dots, f_{j_{n}})(1\otimes v'\otimes\delta_1 \cdots \delta_{i-1}\otimes 1)\\
		\]
		end with $\delta_{i-1}$, or are equal to 
		\[v_t(f_{j_{n+1-t}},\dots, f_{j_{n-2}})(1\otimes v' \otimes \delta_1 \cdots \delta_{i-2} \otimes \delta_{i-1}\otimes 1)\]
		which is zero by the inductive hypothesis.
	\end{proof}
	
	\begin{proposition}\label{proposition B}  Let $A=\kk Q/I$ be a gentle algebra. Assume $Q$ has no oriented cycles,  no parallel arrows, and contains a subquiver of the form
		\[ \xymatrix{ 
			1   \ar[r]_{\alpha_1} \ar@{.>}@/^0.5pc/[r]^{v'}  &   2 \ar[r]_{\alpha_2}  \ar@{.>}@/^0.7pc/[rr]^{u}       &   3  \ar[r]_{\alpha_3}   & 4
		} \]
		with $\omega=(\alpha_1,\alpha_2, \alpha_3)\in AP_3$, $v', u$ nonzero paths, and $v'u=0$.   If $l_n(f_1, \dots, f_n)(\omega)\neq 0$ for some  $n \geq 4$ and for some $f_i \in B^{1}(A)[1], 1 \leq i \leq n$ then 
		\begin{enumerate}[label=(\roman*),ref=(\roman*)] 
			\item
			$u=u' \alpha_3$, or
			\item \label{itemtwo}  
			$v'=\beta_1 \beta_2, u= \epsilon_1 \cdots \epsilon_p$ with $p >1$, $\epsilon_p \neq \alpha_3$, and there exists an arrow $\lambda$ parallel to $\beta_2 \epsilon_1$.  
		\end{enumerate}
	\end{proposition}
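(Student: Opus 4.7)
The plan is to follow the template of Proposition \ref{proposition A}'s proof: start from the nonvanishing of $l_n(f_1,\dots,f_n)(\omega)$, use the Case B hypothesis together with Lemma \ref{lemma:phi=0-RBR} to isolate the relevant $\phi_{n-1}$-term, cascade downward through $H_2$ and Remark \ref{rmk 4.6}, and close the case analysis with the technical Lemmas \ref{lema A}, \ref{lema s=2}, and \ref{lema B}.

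Write $v'=\beta_1\cdots\beta_m$ (with $m\geq 2$, since $Q$ has no parallel arrows) and $u=\delta_1\cdots\delta_s$, so $\beta_m\delta_1=0$. In Case B the summand of $f_i(\alpha_1\alpha_2)$ that produces nonvanishing in the first sum of \eqref{l_n} is $v'\alpha_2$, so some
\[\phi_{n-1}(f_1,\dots,\hat{f}_i,\dots,f_n)(1\otimes v'\alpha_2\otimes\alpha_3\otimes 1)\neq 0.\]
Applying $H_2$ (with $\Theta(\alpha_2,\alpha_3)=1$) and then Remark \ref{rmk 4.6} (since $v'\alpha_2\neq 0$) reduces this to
\[\phi_{n-2}(f_1,\dots,\hat{f}_i,\dots,\hat{f}_j,\dots,f_n)(1\otimes v'\otimes f_j(\alpha_2\alpha_3)\otimes 1)\neq 0\]
for some $j$; a second application of Lemma \ref{lemma:phi=0-RBR} then forces the relevant summand of $f_j(\alpha_2\alpha_3)$ to be precisely $u=\delta_1\cdots\delta_s$. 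One more expansion through $H_2$ yields
\begin{align*}
&v_{n-2}(\dots)(1\otimes\beta_1\cdots\beta_{m-1}\otimes\beta_m\otimes\delta_1\otimes 1)\,\delta_2\cdots\delta_s \\
&\qquad-\sum_{i\geq 2}v_{n-2}(\dots)(1\otimes v'\otimes\delta_1\cdots\delta_{i-1}\otimes\delta_i\otimes 1)\,\delta_{i+1}\cdots\delta_s,
\end{align*}
which is the exact shape to which the three technical lemmas are tailored.

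The conclusion is then obtained by a case split on $\delta_s$ and $s$. If $\delta_s=\alpha_3$, we are in case (i). Suppose $\delta_s\neq\alpha_3$. For $s\geq 2$, every summand of the second line vanishes by Lemma \ref{lema s=2} (when $s=2$) or Lemma \ref{lema B} (when $s>2$), applied with index $n-2\geq 2$, while the first line vanishes by Lemma \ref{lema A} except in its \emph{moreover} clause, namely $n-2=m=2$ together with $s>1$ and the existence of an arrow $\lambda$ parallel to $\beta_2\delta_1$; this surviving configuration is precisely case (ii), with $p=s$, $\epsilon_i=\delta_i$, and $\lambda$ as required. The subcase $s=1$ is ruled out by gentleness: nonvanishing would force the existence of an arrow $\lambda$ from $t(\beta_1)$ to vertex $4$, but vertex $4$ already carries the two distinct incoming arrows $\alpha_3$ and $u=\delta_1$, so adding $\lambda$ would produce three incoming arrows at a single vertex, contradicting the gentle hypothesis.

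The principal obstacle will be to verify that no alternative path parallel to an intermediate tensor (for instance, one parallel to $\beta_m\delta_1$ or to $v'\delta_1\cdots\delta_{i-1}$) opens a side channel through the cascade. This combinatorial bookkeeping is precisely the loop argument of Remark \ref{loop argument}, and is already absorbed inside the proofs of the three technical lemmas; once we recognize that their hypotheses match exactly the data produced by the cascade, together with the gentleness obstruction in the $s=1$ subcase, the proposition follows by assembling the case analysis above.
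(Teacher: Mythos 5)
Your argument follows the paper's proof essentially step for step: the same reduction via \eqref{l_n}, Lemma \ref{lemma:phi=0-RBR} and Remark \ref{rmk 4.6} to the nonvanishing of $\phi_{n-2}(\cdots)(1\otimes v'\otimes u\otimes 1)$, the same two-term expansion of that quantity, and the same appeal to Lemmas \ref{lema A}, \ref{lema s=2} and \ref{lema B} to kill everything except the configuration of item (ii). The only cosmetic difference is that you spell out the $s=1$ exclusion via the three-incoming-arrows obstruction at vertex $4$, which the paper absorbs into the closing lines of the proof of Lemma \ref{lema A}.
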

	
	\begin{proof}
		Assume $v'=  \beta_1 \cdots \beta_m$, $ u= \epsilon_1 \cdots \epsilon_p$, $\beta_m \epsilon_1=0$ and $\epsilon_p \neq  \alpha_3$. By assumption, $Q$ has no double arrows so $m>1$, and since $\epsilon_p \neq  \alpha_3$, the unique nonzero path from $1$ to $4$ is 
		$\alpha_1 u$.
		From \eqref{l_n} and Lemma \ref{lemma:phi=0-RBR} 
		we have that
		$l_n(f_1, \dots, f_n)(\omega)\neq 0$ implies that 
		\begin{align*}
			\phi_{n-1}&(f_1,\dots, \hat{f}_{i_1}, \dots,    f_n)(1\otimes v'\alpha_2 \otimes \alpha_3 \otimes 1) \\
			= & v_{n-1}(f_1,\dots, \hat{f}_{i_1}, \dots,    f_n)(1\otimes v' \otimes \alpha_2 \otimes \alpha_3 \otimes 1) \neq 0
		\end{align*} 
		for some $i_1$. By Remark \ref{rmk 4.6}, it follows that 
		\begin{align*}
			\phi_{n-2}(f_1,\dots, \hat{f}_{i_1}, \dots,  \hat{f}_{i_2}, \dots,  f_n)(1\otimes v' \otimes u \otimes 1) \neq 0
		\end{align*} for some $i_2$.
		This term equals
		\begin{align*}
			v_{n-2}&(f_1,\dots, \hat{f}_{i_1}, \dots, \hat{f}_{i_2}, \dots,  f_n)
			(1 \otimes \beta_1 \cdots \beta_{m-1} \otimes \beta_m \otimes \epsilon_1 \otimes 1)  \epsilon_2 \cdots \epsilon_p \\
			&- \sum_{i=2}^{p}v_{n-2}(f_1,\dots, \hat{f}_{i_1}, \dots, \hat{f}_{i_2}, \dots,  f_n) 
			(1 \otimes \beta_1 \cdots \beta_{m} \otimes  \epsilon_1  \cdots \epsilon_{i-1} \otimes \epsilon_i \otimes 1)\epsilon_{i+1}\cdots\epsilon_p.
		\end{align*}
		Using Lemmas \ref{lema s=2} and \ref{lema B}, we get that the unique possible nonzero summand is the first one. Moreover, the first summand also vanishes when $m\geq 3$, see Lemma \ref{lema A}.
		Hence $l_n(f_1, \dots, f_n)(\omega) =0$ for any $n\geq 4$ when $m\geq 3$.
		If $m=2$, using again Lemma \ref{lema A},
		we get that the first summand vanishes when $n\geq 5$, and its nonvanishing in the case $n=4$
		implies the existence of a subquiver as described in  \ref{itemtwo}. 
	\end{proof}
	
	\begin{remark}\label{rmk l_4 no 0}
		Let $A=\kk Q/I$ be a gentle algebra. Assume $Q$ has no oriented cycles, no parallel arrows, and it contains a subquiver of the form  described in  \ref{itemtwo}, that is, 
		\[
		\xymatrix{
			& 5 \ar[rd]^{\beta_2} \ar[rr]^{\lambda}& & 6  \ar@{.>}@/^1pc/[rrrd]^{\epsilon_2\cdots\epsilon_p}& &\\
			1   \ar[rr]_{\alpha_1}\ar[ru]^{\beta_1} & &  2 \ar[rr]_{\alpha_2} \ar[ru]^{\epsilon_1}  & &    3  \ar[rr]_{\alpha_3} &  &4
		}	 
		\]
		with $\omega=(\alpha_1,\alpha_2, \alpha_3)\in AP_3$, $\epsilon_p\neq\alpha_3$, $\beta_1\beta_2\neq 0$, and $\epsilon_1\cdots\epsilon_p\neq 0$. Then, by the proof of Proposition \ref{proposition B} and Lemma \ref{lema A}, 
		$l_n(f_1, \ldots , f_n)(\omega)=0$ for $n\geq 5$, and $l_4(f_1, \ldots, f_4)(\omega)$ could be nonzero only  when there exist $i\neq j$ such that $\lambda$ appears as a summand in $f_i(\beta_2 \epsilon_1)$ and $f_j(\beta_1 \lambda)\epsilon_2 \cdots \epsilon_p$ is nonzero. Notice that this still holds if $\epsilon_p=\alpha_3$ as we will show in Theorem \ref{theo l_4 no 0 parte2}.
	\end{remark}

	\section{Main results} \label{sec 5}
	From Propositions \ref{prop 1}, \ref{proposition A}   and \ref{proposition B}, we conclude that if $A=\kk Q/I$ is a gentle algebra whose quiver $Q$ has no oriented cycles and no parallel arrows, and $l_n(f_1, \dots, f_n)(\omega)\neq 0$ for some  $n \geq 4$, for some $f_i \in B^{1}(A)[1], 1 \leq i \leq n$, and for some $\omega=(\alpha_1,\alpha_2, \alpha_3)\in AP_3$, then $Q$ contains a subquiver of the form
	\begin{enumerate}
		\item[(i)]
		\[ \xymatrix{ 
			1   \ar[r]_{\alpha_1}  &   2 \ar[r]_{\alpha_2}   \ar@{.>}@/^0.7pc/[r]^{u'}       &   3  \ar[r]_{\alpha_3}  \ar@{.>}@/^0.5pc/[r]^{w'}  & 4
		}\]
		with nonzero paths $u',w'$ such that $\alpha_1 u' \alpha_3 \neq 0$, 
		\item[(ii)]
		\[\xymatrix{ 
			1   \ar[r]_{\alpha_1} \ar@{.>}@/^0.5pc/[r]^{v'}  &   2 \ar[r]_{\alpha_2}  \ar@{.>}@/^0.7pc/[r]^{u'}       &   3  \ar[r]_{\alpha_3}   & 4
		} \]
		with nonzero paths $v',u'$  such that $\alpha_1 u' \alpha_3 \neq 0$, or
		\item[(iii)]
		\[ \xymatrix{
			& 5 \ar[rd]^{\beta_2} \ar[rr]^{\lambda}& & 6  \ar@{.>}@/^1pc/[rrrd]^{\epsilon_2 \cdots \epsilon_p}& &\\
			1   \ar[rr]_{\alpha_1}\ar[ru]^{\beta_1} & &  2 \ar[rr]_{\alpha_2} \ar[ru]^{\epsilon_1}  & &    3  \ar[rr]_{\alpha_3} & & 4
		}	 \] 
		with nonzero paths 
		$\beta_1\beta_2 \alpha_2$ and  $\alpha_1 \epsilon_1 \cdots \epsilon_p, \epsilon_p \neq \alpha_3$.
	\end{enumerate}
	The following proposition  
	will allow us to find a nilpotence condition on a gentle algebra $A$ whose quiver has no parallel arrows and no oriented cycles. More precisely, we find conditions on the quiver $Q$ in order to get the vanishing of $l_n(f_1, \ldots, l_n)$ for all $n \geq 5$ and for all $f_i \in B^1(A)[1]$.

	\begin{proposition}
		Let $A=kQ/I$ be the  gentle algebra given by the quiver	\[ \xymatrix{
			& & & 5 \ar[rd]^{\gamma_2} \ar[rr]^{\rho} & & 6 \ar@{.>}@/^1pc/[rd]^{\delta_2\cdots\delta_s} &\\
			1   \ar[rr]_{\alpha_1} \ar@{.>}@/^1pc/[rr]^{v'} &  &  2 \ar[rr]_{\alpha_2} \ar[ru]^{\gamma_1}  & &    3 \ar[ru]_{\delta_1} \ar[rr]_{\alpha_3} & & 4,
		}	\label{caso 2} \tag{$\star$} \]
		with $\omega=(\alpha_1,\alpha_2, \alpha_3)\in AP_3$, $v', \gamma_1 \gamma_2$ and $w'=\delta_1 \cdots \delta_s$ nonzero paths.
		Then there exists $f \in B(A)[1]$ such that  $l_n(f, \ldots, f)(\omega) \neq 0$ for any $n=2+3k, k \geq 1$.
	\end{proposition}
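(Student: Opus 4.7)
The plan is to exhibit an explicit $f \in B^1(A)[1]$ of the form
\[
f = (\alpha_1\alpha_2\,||\,v'\alpha_2) + (\alpha_2\alpha_3\,||\,\alpha_2 w') + (\gamma_1\rho\,||\,\alpha_1\delta_1) + (\gamma_2\delta_1\,||\,\rho) + \text{(correction terms)},
\]
where the correction terms are supported on the remaining length-two relations at vertices $5$ and $6$, and are inserted only to kill unwanted summands. The motivation for this ansatz is the nonvanishing edge case highlighted in the last part of Lemma \ref{lema A}: with $g_1=(\gamma_1\rho\,||\,\alpha_1\delta_1)$ and $g_2=(\gamma_2\delta_1\,||\,\rho)$, one has
\[
v_2(g_1,g_2)(1\otimes \gamma_1\otimes\gamma_2\otimes\delta_1\otimes 1)\,\delta_2\cdots\delta_s \;=\; \alpha_1\delta_1\delta_2\cdots\delta_s \;=\; \alpha_1 w' \;\neq\; 0,
\]
which gives us a fixed nonzero target path $P=\alpha_1 w'$ from $1$ to $4$ parallel to $\alpha_1\alpha_2\alpha_3$.

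Next, I would settle the base case $n=5$. Using \eqref{l_n} together with Lemma \ref{lemma:phi=0-RBR}, reduce $l_5(f,\ldots,f)(\omega)$ to evaluations of $\phi_4$ on $1\otimes\alpha_1\otimes\alpha_2 w'\otimes 1$ and $1\otimes v'\alpha_2\otimes\alpha_3\otimes 1$. Each expansion, through Remark \ref{rmk 4.6} and Lemmas \ref{lema s=2}--\ref{lema B}, collapses to exactly the $v_2$-edge case above, producing the single surviving path $P$. Counting the contributions from the different orderings of the $f$'s (and checking that none of the corrections introduce rival terms) yields $l_5(f,\ldots,f)(\omega)=c_1 P$ with $c_1\neq 0$.

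For the inductive step $n \mapsto n+3$, I would argue that each further triple insertion of $f$ in the recursion for $v_{n+3}$ fits into a unique ``3-cycle'' around the subquiver $(\star)$, namely: one application lands in $f(\alpha_1\alpha_2)$ or $f(\alpha_2\alpha_3)$, a second lands in $f(\gamma_1\rho)$, and the third in $f(\gamma_2\delta_1)$, after which the cascade returns to an argument of the same shape as in Step 2. All other insertions vanish by Lemmas \ref{lemma:phi=0-RBR}, \ref{lema s=2}, and \ref{lema B} (using that $Q$ has no oriented cycles and no parallel arrows). Consequently $l_{2+3(k+1)}(f,\ldots,f)(\omega)= c_{k+1} P$ with $c_{k+1}$ obtained from $c_k$ by a nonzero combinatorial multiplier.

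The main obstacle is the last step: showing that the many contributions to $v_{n+3}$ really do sum to a nonzero scalar multiple of $P$, rather than cancelling. The Koszul signs $\chi(\sigma)$, the $(-1)^{i(j-1)}$ from the $L_\infty$-axiom, and the $\kappa(\tau)_t$ factors in \eqref{eq: vn} interact in a delicate way, and the exact arithmetic progression $n=2+3k$ comes from a parity alignment of these signs. A clean way to organize this is to set up an explicit bijection between the index set of surviving terms in $v_{n+3}(f,\ldots,f)$ and triples consisting of a surviving term of $v_n(f,\ldots,f)$ plus an ordered placement of the three new arguments into the $3$-cycle; the coefficient $c_{k+1}/c_k$ is then read off as a nonzero rational scalar, completing the induction.
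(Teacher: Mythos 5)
Your overall strategy -- exhibit an explicit $f$, show that the recursion for $v_n$ cycles with period $3$ around the triangle at vertices $5$ and $6$, and induct in steps of three -- is exactly the strategy of the paper. But the concrete element you propose does not work, and the defect is not repairable by the ``correction terms'' you allow yourself. First, $(\gamma_1\rho\,||\,\alpha_1\delta_1)$ is not a basis element of $B^1(A)[1]$: $\gamma_1\rho$ runs from $2$ to $6$, while $\alpha_1\delta_1$ is not even a composable path ($t(\alpha_1)=2\neq 3=s(\delta_1)$); the paper uses $(\gamma_1\rho\,||\,\alpha_2\delta_1)$. For the same reason your target path $P=\alpha_1 w'=\alpha_1\delta_1\cdots\delta_s$ is not a path; the surviving path is $v'\alpha_2 w'$ (the paper gets $l_n(f,\ldots,f)(\omega)=-(-1)^{k(k+1)/2}\,n!\,v'\alpha_2 w'$).

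More seriously, your $f$ is missing the summands that actually drive the nonvanishing. The paper takes $f(\alpha_1\alpha_2)=v'\alpha_2+\alpha_1\gamma_1\gamma_2$ and $f(\alpha_2\alpha_3)=\gamma_1\gamma_2\alpha_3+\alpha_2 w'$. The summand $\alpha_1\gamma_1\gamma_2$ is essential: in the reduction of $v_{n-1}(f,\ldots,f)(1\otimes\alpha_1\otimes\alpha_2\otimes\delta_1\otimes1)\delta_2\cdots\delta_s$ one lands on $\phi_{n-2}(f,\ldots,f)(1\otimes f(\alpha_1\alpha_2)\otimes\delta_1\otimes1)$, and the contribution of $v'\alpha_2$ dies by Lemma~\ref{lemma:phi=0-RBR} because $v'\alpha_2\delta_1\neq0$; only the summand $\alpha_1\gamma_1\gamma_2$, which ends in $\gamma_2$ with $\gamma_2\delta_1=0$, feeds the recursion into the triangle via $v_{n-2}(\ldots)(1\otimes\alpha_1\gamma_1\otimes\gamma_2\otimes\delta_1\otimes1)$. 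With your $f(\alpha_1\alpha_2)=v'\alpha_2$ alone, that branch vanishes, and the other branch $\phi_{n-1}(\ldots)(1\otimes v'\alpha_2\otimes\alpha_3\otimes1)$ also vanishes (it reduces to $\phi_{n-2}(\ldots)(1\otimes v'\otimes\alpha_2 w'\otimes1)=0$ since $v'\alpha_2\neq0$), so $l_n(f,\ldots,f)(\omega)=0$ for all $n\geq4$. These summands are not corrections ``inserted only to kill unwanted summands''; they are the engine of the construction, and they live over the relations $\alpha_1\alpha_2$ and $\alpha_2\alpha_3$, not over relations at vertices $5$ and $6$. Finally, you defer the sign and coefficient bookkeeping as an acknowledged obstacle; the paper resolves it by a direct computation yielding $l_n(f,\ldots,f)(\omega)=(-1)^n\tfrac{n!}{(n-3)!}\,l_{n-3}(f,\ldots,f)(\omega)$ for $n\geq6$ together with the base case $n=5$, so that part of your plan also remains to be carried out.
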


	\begin{proof}
		Let $v'=\beta_1\dots \beta_m$ and  $n\geq 4$. Consider   $$f=(\alpha_1\alpha_2||v'\alpha_2+\alpha_1\gamma_1\gamma_2)+(\alpha_2\alpha_3||\gamma_1\gamma_2\alpha_3+\alpha_2w')+(\gamma_2\delta_1||\rho) +(\gamma_1\rho||\alpha_2\delta_1).$$  
		Since $\alpha_1 \gamma_1 \gamma_2 \alpha_3$ is a nonzero path, by equation \eqref{l_n} and Lemma \ref{lemma:phi=0-RBR} we have
		\begin{align*} 
			l_n(f, \ldots, f)(\omega)=(-1)^n n[&\phi_{n-1}(f, \ldots, f) (1\otimes v'\alpha_2 \otimes \alpha_3\otimes 1) \\
			- &
			\phi_{n-1}(f, \ldots, f)(1\otimes \alpha_1\otimes \alpha_2w'\otimes 1)]
		\end{align*}
		which is, by definition, equal to
		\begin{align*} 
			&(-1)^n n [ v_{n-1}(f, \ldots, f)(1\otimes v'\otimes \alpha_2 \otimes \alpha_3\otimes 1)
			\\ 
			&+ v_{n-1}(f, \ldots, f)(1\otimes \alpha_1\otimes \alpha_2 \otimes \delta_1\otimes 1)\delta_2\cdots\delta_s \\   
			&+ \sum_{i=2}^s v_{n-1}(f, \ldots, f)(1\otimes \alpha_1\otimes \alpha_2 \delta_1\cdots \delta_{i-1}\otimes \delta_i\otimes 1)\delta_{i+1}\cdots \delta_s 
			].
		\end{align*}
		Since the only nonzero path  parallel  to $\alpha_1\alpha_2\delta_1\cdots \delta_{i-1}$ is $v'\alpha_2\delta_1\cdots \delta_{i-1}$, then $$v_{n-1}(f, \ldots, f)(1\otimes \alpha_1\otimes \alpha_2 \delta_1\cdots \delta_{i-1}\otimes \delta_i\otimes 1)\delta_{i+1}\cdots \delta_s=0$$ for all $i$. 
		Also, as $f(\beta_m\gamma_1)=0$, by Lemma \ref{lemma: no parallel} we get that
		\[v_{n-1}(f, \ldots, f)(1\otimes v'\otimes \alpha_2 \otimes \alpha_3\otimes 1)= (-1)^n \phi_{n-2}(f, \ldots, f)(1\otimes v' \otimes \gamma_1\gamma_2\alpha_3\otimes 1)=0.\]
		Hence, 
		\begin{align}
			l_n(f, \ldots, f)(\omega)
			=  (-1)^n n \ v_{n-1}(f, \ldots, f)(1\otimes \alpha_1\otimes \alpha_2 \otimes \delta_1\otimes 1)\delta_2\dots\delta_s  \label{top row}  \end{align}
		which is equal to 
		$-n(n-1)v_{n-2}(f, \ldots, f)(1\otimes \alpha_1\gamma_1\otimes \gamma_2\otimes \delta_1\otimes 1)\delta_2\cdots\delta_s$.
		This term vanishes if $n=4$ and, for $n \geq 5$, it is equal to
		\begin{align*}
			(-1)^nn(n-1)&(n-2) v_{n-3}(f, \ldots, f)(1\otimes \alpha_1\otimes \gamma_1\otimes \rho\otimes 1)\delta_2\cdots\delta_s
			\\
			=&
			\begin{cases}
				-\frac{n!}{(n-4)!}v_{n-4}(f, \ldots, f)(1\otimes \alpha_1\otimes \alpha_2 \otimes \delta_1\otimes 1)\delta_2\cdots\delta_s, & \mbox{ if $n\geq 6$,}\\
				5!\, v'\alpha_2 w', & \mbox{ if $n=5$.}
			\end{cases}   
		\end{align*}
		Now, using \eqref{top row}, we get that, for $n\geq 6$,
		\[l_n(f, \ldots, f) (\omega) = (-1)^{n} \frac{n!}{(n-3)!} l_{n-3}(f, \ldots, f)(\omega).\]
		Therefore, one can check that 
		\[l_n(f, \ldots, f)(\omega)= -(-1)^{\frac{k(k+1)}{2}} n! \,v'\alpha_2w'\neq 0 \] 
		for $n=2+3k$ and $k\geq1$.
	\end{proof}
	
	In the next two theorems we find a sufficient condition for the vanishing of $l_n(f_1, \ldots, f_n)$ for $n \geq 5$, and a necessary condition for the nonvanishing of $l_4(f_1, f_2, f_3, f_4)$, for $f_i \in B^1(A)[1]$. With this purpose,  in the next proof we will also pay attention to those terms involved that vanish also for $n=4$.
	
	\begin{theorem}\label{teo ln=0}
		Let $A=\kk Q/I$ be a gentle algebra. Assume $Q$ has no oriented cycles and no parallel arrows.
		If $Q$ does not contain subquivers of the form \eqref{caso 2}, then $l_n(f_1, \ldots, f_n)=0$ for all $n \geq 5$, for all $f_i \in B^1(A)[1], 1 \leq i \leq n$.
	\end{theorem}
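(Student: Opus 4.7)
The plan is to argue by contrapositive: assume there exist $f_1, \ldots, f_n \in B^1(A)[1]$ and $\omega = (\alpha_1, \alpha_2, \alpha_3) \in AP_3$ with $l_n(f_1, \ldots, f_n)(\omega) \neq 0$ for some $n \geq 5$, and deduce that $Q$ must contain a subquiver of the form $(\ref{caso 2})$. By the classification recalled at the start of Section \ref{sec 5} (combining Propositions \ref{prop 1}, \ref{proposition A}, and \ref{proposition B}), the subquiver of $Q$ supported at $\omega$ falls into one of the three shapes (i), (ii), or (iii). Shape (iii) is ruled out immediately by Remark \ref{rmk l_4 no 0}, which already asserts vanishing of $l_n$ at $\omega$ for $n \geq 5$ in that configuration. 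Hence we may assume shape (i) or shape (ii).

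The main work is then to show that shapes (i) and (ii) alone are insufficient: nonvanishing at $n \geq 5$ forces the simultaneous presence of a path $v'$ parallel to $\alpha_1$, a path $w'$ parallel to $\alpha_3$, a factorization $u'=\gamma_1\gamma_2$ of the parallel $\alpha_2$-path, and a shortcut arrow $\rho$ parallel to $\gamma_2\delta_1$, which is precisely the content of $(\ref{caso 2})$. I would treat shape (ii) first (where $u = u'\alpha_3$, $v'$ is parallel to $\alpha_1$, $v'u'=0$, and $u' = \gamma_1\cdots\gamma_k$ with $k \geq 2$ by the no-parallel-arrows assumption). Expanding $l_n(f_1,\ldots,f_n)(\omega)$ via \eqref{l_n} and repeatedly unfolding each $\phi_m$ using Remark \ref{loop argument}, the only surviving summands are those where some $f_i(\alpha_2\alpha_3)$ has a summand of the form $u'\alpha_3$ or $\alpha_2 w'$ for some nonzero $w'$ parallel to $\alpha_3$. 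For the $u'\alpha_3$ contributions, Lemma \ref{lema B} (applied to the terms of the shape $v_m(\dots)(1\otimes v'\otimes \alpha_2\delta_1\cdots\delta_{i-1}\otimes \delta_i\otimes 1)$) combined with Lemma \ref{lema s=2} forces cascading vanishing unless the appropriate shortcut arrow $\rho$ parallel to $\gamma_k\delta_1$ is present; and Lemma \ref{lema A} then forces $k=2$. Symmetrically, shape (i) (coming from Case A, with $u = \alpha_1 u'$, $w'$ parallel to $\alpha_3$, $u'w'=0$) is handled by the analog of Lemmas \ref{lema A}--\ref{lema B} applied on the opposite side of $\omega$; the expansion now forces the existence of a $v'$ parallel to $\alpha_1$ in order to admit a nonzero $f_i(\alpha_1\alpha_2) = v'\alpha_2$ summand that survives. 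Combining the two forced structures yields precisely the $(\ref{caso 2})$ subquiver.

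The main obstacle is the detailed bookkeeping of the recursive expansion: one must, level by level, enumerate the finitely many arguments $1\otimes x\otimes y\otimes 1$ that can arise for $\phi_r$ during the unfolding of $\phi_{n-1},\phi_{n-2},\ldots,\phi_2,\phi_1$, and verify that the no-oriented-cycles and no-parallel-arrows hypotheses force all but the distinguished parallel-path configurations to yield zero. The key technical ingredient is exactly the procedure of Remark \ref{loop argument}, illustrated in Example \ref{ejemplo s=1}, together with the vanishing results of Lemmas \ref{lemma:phi=0-RBR}, \ref{lemma: no parallel}, \ref{lema A}, \ref{lema s=2}, and \ref{lema B}. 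Once these are applied uniformly, each candidate for a nonzero contribution at level $n-1$ reduces (after a three-step cycle of unfolding like in the proof of the preceding proposition) to a candidate at level $n-4$; iterating until $n \in \{3,4\}$ yields the claim, using that the step-down structure is inherently tied to the shape of $(\ref{caso 2})$.
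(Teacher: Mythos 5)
Your outline follows the paper's strategy in its broad strokes: the same reduction via Propositions \ref{prop 1}, \ref{proposition A}, \ref{proposition B} and Remark \ref{rmk l_4 no 0} to the configurations (i)/(ii), the same recursive unfolding of \eqref{l_n} controlled by Remark \ref{loop argument}, and the correct intuition that the only way the recursion can survive indefinitely is through the period-three cycle created by \eqref{caso 2}. However, the proposal has genuine gaps in exactly the place where the theorem is actually proved. The entire content of the paper's argument is the term-by-term enumeration you defer as ``bookkeeping'': the split of the $v'$-side term \eqref{eqn1} into \eqref{termino 2}, \eqref{termino 3}, \eqref{termino 1} (all of which vanish \emph{without} invoking the no-\eqref{caso 2} hypothesis), and the analysis of the $w'$-side term \eqref{eqn1'}, where the hypothesis is used exactly once --- in the branch \eqref{eq c} with $c=1$, to rule out $r=2$ and hence force $r>2$, after which a loop argument closes that branch too. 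Your proposal never locates where the hypothesis enters, and your description of what is forced is partially backwards: in the $w'$-analysis the surviving summand of $f_j(\alpha_1\alpha_2)$ is $\alpha_1 u'=\alpha_1\gamma_1\cdots\gamma_r$ with $\gamma_r\delta_1=0$, not a $v'\alpha_2$ summand (that one dies immediately by Lemma \ref{lemma:phi=0-RBR} because $\alpha_2\delta_1\neq 0$), and it is the arrow $\rho$ parallel to $\gamma_r\delta_1$ together with $r=2$ that reproduces \eqref{caso 2}.

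Two further points would make your plan fail as written. First, Lemmas \ref{lema A}, \ref{lema s=2} and \ref{lema B} are proved for the Case B configuration ($v'$ parallel to $\alpha_1$, $u=\delta_1\cdots\delta_s$ parallel to $\alpha_2\alpha_3$, $v'u=0$); they do not apply to the arguments that actually occur here, such as $v_{n-1}(\ldots)(1\otimes\alpha_1\otimes\alpha_2\delta_1\cdots\delta_{j-1}\otimes\delta_j\otimes 1)$ with $\delta_1\cdots\delta_s$ parallel to $\alpha_3$, or $v_{n-2}(\ldots)(1\otimes\alpha_1\gamma_1\cdots\gamma_{r-1}\otimes\gamma_r\otimes\delta_1\otimes 1)$. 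The paper does not cite these lemmas in this proof; it redoes the analysis from scratch using Lemma \ref{lemma:phi=0-RBR} and Remark \ref{loop argument}, and you would need to prove the analogous statements for the new zero-composition patterns. Second, your termination criterion ``iterate the three-step reduction until $n\in\{3,4\}$'' does not close the argument: under the theorem's hypotheses $l_4$ need not vanish (Theorem \ref{theo l_4 no 0 parte2} and the configuration \eqref{diamond}), so a clean reduction of a level-$n$ term to a level-$(n-3)$ term would leave the residual case $n\equiv 1\pmod 3$ unresolved. The paper's termination mechanism is different and stronger: in each branch the set of reachable arguments $1\otimes x\otimes y\otimes 1$ is finite and closed under the recursion, and $\phi_1$ and $\phi_2$ vanish on all of them, so the whole branch is zero outright.
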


	\begin{proof}
		Suppose that there exist $\omega=(\alpha_1, \alpha_2, \alpha_3)\in AP_3$ and  $f_1, \ldots, f_n\in B(A)[1]$ such that
		$l_n(f_1, \dots, f_n)(\omega)\neq 0$ and $n\geq 5$.
		It follows from Propositions \ref{prop 1}, 
		\ref{proposition A} and \ref{proposition B}, and Remark \ref{rmk l_4 no 0}
		that the quiver $Q$ has a subquiver of the form 
		\[ \xymatrix{
			& 1   \ar[r]_{\alpha_1} \ar@{.>}@/^1pc/[r]^{v'} &   2 \ar[r]_{\alpha_2} \ar@{.>}@/^1pc/[r]^{u'}   &  \ar@{.>}@/^1pc/[r]^{w'}  3  \ar[r]_{\alpha_3} & 4
		}	 \]
		where the nonzero paths $v'$ and $w'$ may not appear simultaneously.  
		Set $v'=\beta_1\cdots\beta_m$, $u'=\gamma_1\cdots\gamma_r$, and $w'=\delta_1\cdots\delta_s$, {where $m,r,s >1$}. 	By definition, $l_n(f_1, \dots, f_n)(\omega)$ is a linear combination of terms of the form
		\begin{align}\label{eqn1}
			\phi_{n-1}(f_1, \dots, \hat{f}_i,\dots, f_n)(1\otimes v'\alpha_2\otimes \alpha_3\otimes 1), \mbox{ and }\\ \label{eqn1'}
			\phi_{n-1}(f_1, \dots, \hat{f}_i,\dots, f_n)(1\otimes\alpha_1\otimes \alpha_2w'\otimes 1).
		\end{align}
		We  first consider the term \eqref{eqn1}, which is equal to 
		\[
		v_{n-1}(f_1, \dots, \hat{f}_i,\dots, f_n)(1\otimes v'\otimes \alpha_2\otimes \alpha_3\otimes 1), \]
		and can only be written as a linear combination of  terms of the form
		\begin{align}  \label{termino 2}
			& v_{n-2}(f_1, \dots, \hat{f}_i,\dots, \hat{f}_k,\dots, f_n)(1\otimes v'\otimes \gamma_1\cdots \gamma_{j-1}\otimes \gamma_j\otimes 1)\gamma_{j+1}\cdots\gamma_r\alpha_3, \\ \label{termino 3}
			& v_{n-2}(f_1, \dots, \hat{f}_i,\dots, \hat{f}_k,\dots, f_n)(1\otimes v'\otimes u'\otimes\alpha_3\otimes 1), \mbox{ and } \\
			\label{termino 1}
			& v_{n-2}(f_1, \dots, \hat{f}_i,\dots, \hat{f}_k,\dots, f_n)(1\otimes\beta_1\cdots \beta_{m-1}\otimes \beta_m\otimes \gamma_1\otimes 1)\gamma_2\cdots \gamma_r\alpha_3.
		\end{align}
		Since $Q$ has no oriented cycles, the only nonzero path parallel to $v'\gamma_1\cdots\gamma_{j-1}$ is $\alpha_1\gamma_1\cdots \gamma_{j-1}$,
		then \eqref{termino 2} vanishes
		by Lemma \ref{lemma:phi=0-RBR} for any $n\geq 4$. 
		By definition, \eqref{termino 3} is a linear combination of terms of the form
		\begin{align*}
			\phi_{n-3}(f_{i_1}, \dots,f_{i_{n-3}} )&(1\otimes \beta_1\cdots\beta_{m-1} f_{i_{n-2}}(\beta_m \gamma_1)\gamma_2\cdots \gamma_r\otimes \alpha_3\otimes 1),  \mbox{ and }\\
			\phi_{n-2-t}(f_{k_1}, \dots,f_{k_{n-2-t}})&(1\otimes \phi_t(f_{k_{n-1-t}}, \dots,f_{k_{n-2}})(1\otimes v' \otimes u' \otimes 1)\otimes \alpha_3\otimes 1), 
		\end{align*}
		for $2\leq t < n-2$. Since $\gamma_r\alpha_3  \neq 0$, and 
		\begin{align*}
			\phi_t(f_{k_{n-1-t}},& \dots,f_{k_{n-2}})(1\otimes v' \otimes u' \otimes 1)\\
			= &v_t(f_{k_{n-1-t}}, \dots,f_{k_{n-2}})(1\otimes \beta_1\cdots\beta_{m-1}\otimes \beta_m\otimes \gamma_1\otimes 1)\gamma_2\cdots\gamma_r \\
			& - \sum_{j=2}^r v_t(f_{k_{n-1-t}}, \dots,f_{k_{n-2}})(1\otimes v' \otimes  \gamma_1\cdots \gamma_{j-1}\otimes \gamma_j\otimes 1)\gamma_{j+1}\cdots\gamma_r,
		\end{align*}
		we have that \eqref{termino 3} is zero for $n=4$ and the only nonzero terms which can appear in \eqref{termino 3} for $n\geq 5$ are
		\begin{align*}
			\phi_{n-2-t}(f_{k_1}, \dots,f_{k_{n-2-t}})&(1\otimes v_t(f_{k_{n-1-t}}, \dots,f_{k_{n-2}})(1\otimes v' \otimes  \gamma_1\cdots \gamma_{r-1}\otimes \gamma_r\otimes 1) \otimes \alpha_3\otimes 1).
		\end{align*}
		By definition of $v_t$, we have to study the terms of the form 
		\begin{align*}
			\phi_{t-c}(f_{p_1}, \dots,f_{p_{t-c}})(1\otimes \phi_c(f_{p_{t-c+1}}, \dots,f_{p_{t}})(1\otimes v'\otimes \gamma_1\cdots\gamma_{r-1} \otimes 1 ) \otimes \gamma_r\otimes 1) 
		\end{align*}
		for all $1 \leq c <t$. Since the only nonzero path parallel to $v' \gamma_1\cdots\gamma_{r-1}$ is $\alpha_1 \gamma_1\cdots\gamma_{r-1}$ and $\gamma_{r-1}\gamma_r\neq 0$, we get that \eqref{termino 3} vanishes for all $n \geq 4$.
		
		Now we will see that \eqref{termino 1} equals zero. 
		Since $\phi_t(1 \otimes \beta_m \otimes \gamma_1 \otimes 1)=0$ when $t>1$, we have that \eqref{termino 1} is a linear combination of terms of the form
		\begin{equation*}
			\phi_{n-3}(f_{q_1}, \dots,f_{q_{n-3}})(1\otimes \beta_1\cdots \beta_{m-1}\otimes f_{q_{n-2}}(\beta_m \gamma_1)\otimes 1)\gamma_2\cdots\gamma_r \alpha_3.
		\end{equation*}
		These terms vanish for all $n \geq 5$, unless there exists a nonzero path
		$\lambda_1\cdots \lambda_h$  parallel to $\beta_m\gamma_1$: 
		\[ \xymatrix{ 
			&   \bullet  \ar[rd]^{\beta_m} \ar@{.>}@/^.5pc/[rr]^{\lambda_1\cdots\lambda_h} &  & \bullet \ar@{.>}@/^.5pc/[rd]^{\gamma_2\cdots\gamma_r} & &\\
			1   \ar@{.>}@/^.5pc/[ru]^{\beta_1\cdots\beta_{m-1}} \ar[rr]_{\alpha_1} & &  2 \ar[rr]_{\alpha_2} \ar[ru]^{\gamma_1}   &  &  3   \ar@{.>}@/^1pc/[rr]^{w'} \ar[rr]_{\alpha_3} &  & 4.
		}
		\]
		Notice that $\lambda_h\neq \gamma_1$ since $A$ is gentle without oriented cycles, and hence $\lambda_h \gamma_2 =0$. In this case, it vanishes if $h=1$, $m=2$, and $n\geq 5$,
		and otherwise it can only be a linear combination of terms of the form
		\begin{align*}
			v_{n-3}&(f_{q_1}, \dots,f_{q_{n-3}})(1\otimes \beta_1\cdots \beta_{m-1}\otimes \lambda_1\cdots \lambda_{h-1}\otimes \lambda_h \otimes 1)\gamma_2\cdots\gamma_r \alpha_3, & \mbox{if $h>1$,} \\
			v_{n-3}&(f_{q_1}, \dots,f_{q_{n-3}})(1\otimes \beta_1\cdots \beta_{m-2}\otimes \beta_{m-1}\otimes \lambda_1\otimes 1)\gamma_2\cdots\gamma_r \alpha_3, & \mbox{ if $h=1$}.
		\end{align*}
		The first term vanishes since there are no nonzero paths parallel to $\beta_1\cdots \beta_{m-1} \lambda_1\cdots \lambda_{h-1}$ since $Q$ has no oriented cycles,   thus \eqref{termino 1} vanishes for $h>1$. The second term, for 
		$h=1$ and $m>2$, vanishes if there is no nonzero path $y$ parallel to $\beta_{m-1}\lambda_1$ such that $\beta_{m-2} y=0$. Since $Q$ has no oriented cycles, the unique possibility for such $y$ is $y=z \lambda_1$, for $z$ a nonzero path parallel to $\beta_{m-1}$. 
		Arguing as in Example \ref{ejemplo s=1},
		one can check that the only arguments appearing in the next steps of the computation
		are $1\otimes \beta_1\cdots \beta_{m-1}\otimes \lambda_1 \otimes 1$ and $1\otimes \beta_1\cdots \beta_{m-2}\otimes z \lambda_1 \otimes 1$. Applying Remark \ref{loop argument} we get that \eqref{termino 1} vanishes. 
		Therefore, \eqref{eqn1} vanishes for $n\geq 5$.
		
		It remains to show that \eqref{eqn1'} vanishes. By definition, it is equal to
		\begin{align}\label{eq v n-1 a}
			-v_{n-1}&(f_1, \dots, \hat{f}_i,\dots, f_n)(1\otimes\alpha_1\otimes \alpha_2\otimes \delta_1\otimes 1)\delta_2\cdots \delta_s\\ \notag
			&-\sum_{j=2}^s v_{n-1}(f_1, \dots, \hat{f}_i,\dots, f_n)(1\otimes\alpha_1\otimes \alpha_2\delta_1\cdots \delta_{j-1}\otimes \delta_j\otimes 1)\delta_{j+1}\cdots \delta_s.
		\end{align}
		By Lemma \ref{lemma:phi=0-RBR}, all the last terms vanish for all $n \geq 3$ since, as $Q$ has no oriented cycles, $v'\alpha_2\delta_1\cdots\delta_{j-1}$ is the only nonzero path parallel to $\alpha_1\alpha_2\delta_1\cdots\delta_{j-1} $. 
		The term \eqref{eq v n-1 a} can only be a linear combination of terms of the form
		\begin{align*}\phi_{n-2}&(f_1, \dots, \hat{f}_i, \dots, \hat{f}_j, \dots, f_n)(1\otimes\alpha_1\gamma_1\cdots\gamma_r\otimes \delta_1\otimes 1)\delta_2\cdots \delta_s\\
			&= v_{n-2}(f_1, \dots, \hat{f}_i, \dots, \hat{f}_j, \dots, f_n)(1\otimes\alpha_1\gamma_1\cdots\gamma_{r-1}\otimes \gamma_r\otimes  \delta_1\otimes 1)\delta_2\cdots \delta_s.
		\end{align*}
		Let $\rho_1\cdots\rho_c$ be a nonzero path parallel to $\gamma_r\delta_1$: 
		\[ \xymatrix{ 
			& &  & \bullet  \ar[rd]^{\gamma_r} \ar@{.>}@/^.5pc/[rr]^{\rho_1\cdots\rho_c} &  & \bullet \ar@{.>}@/^.5pc/[rd]^{\delta_2\cdots\delta_s}  &\\
			1    \ar[rr]_{\alpha_1} \ar@{.>}@/^1pc/[rr]^{v'} & &  2 \ar[rr]_{\alpha_2} \ar@{.>}@/^.5pc/[ru]^{\gamma_1\cdots\gamma_{r-1}}   &  &  3 \ar[ur]^{\delta_1}   \ar[rr]_{\alpha_3} & & 4.}
		\]
		Thus, the only terms which can appear in \eqref{eq v n-1 a} are multiples of
		\begin{equation} \label{eq c}\phi_{n-3}(f_{t_1}, \dots, f_{t_{n-3}})(1\otimes\alpha_1\gamma_1\cdots\gamma_{r-1}\otimes \rho_1\cdots \rho_c\otimes 1)\delta_2\cdots \delta_s\end{equation}
		for $n \geq 5$.
		If $c>1$, using that $\rho_c \delta_2=0$, we get that \eqref{eq c} is equal to
		\begin{align*}
			-v_{n-3}(f_{t_1}, \dots, f_{t_{n-3}})(1\otimes\alpha_1\gamma_1\cdots\gamma_{r-1}\otimes \rho_1\cdots \rho_{c-1}\otimes \rho_c\otimes 1)\delta_2\cdots \delta_s.
		\end{align*}
		This is zero since there are no nonzero paths parallel to  $\alpha_1\gamma_1\cdots\gamma_{r-1}\rho_1\cdots\rho_{c-1}$.
		{If $c=1$, \eqref{eq c} is equal to }
		\begin{align*}  
			& v_{n-3}(f_{t_1}, \dots, f_{t_{n-3}})(1\otimes\alpha_1\gamma_1\cdots\gamma_{r-2}\otimes \gamma_{r-1} \otimes \rho_1\otimes 1)\delta_2\cdots \delta_s.
		\end{align*}
		Notice that for $c=1$, we have that $r>2$ since by assumption the quiver $Q$ does not contain a subquiver of the form  \eqref{caso 2}.
		In this case, this term vanishes if there is no nonzero path $y$ parallel to $\gamma_{r-1}\rho_1$ such that $\gamma_{r-2} y=0$.
		Since $Q$ has no oriented cycles, the unique possibility for such $y$ is $y = z\rho_1$,
		for $z$ a nonzero path parallel to $\gamma_{r-1}$. Arguing as in Example \ref{ejemplo s=1},
		one can check  that the  only arguments appearing in the next steps of the computation
		are $1\otimes\alpha_1\gamma_1\cdots\gamma_{r-1}\otimes \rho_1 \otimes 1$ and $1\otimes \alpha_1\gamma_1\cdots\gamma_{r-2}\otimes z\rho_1 \otimes 1$. Applying Remark \ref{loop argument} we get that \eqref{eq c} vanishes for all $n \geq 5$. Therefore, \eqref{eqn1'} vanishes and the proof is completed.
	\end{proof}

	\begin{theorem}\label{theo l_4 no 0 parte2}
		Let $A=\kk Q/I$ be a gentle algebra. Assume $Q$ has no oriented cycles and no parallel arrows.
		If $l_4(f_1, f_2, f_3, f_4) \neq 0$ for some $f_i \in B^1(A)[1]$,
		then $Q$ contains a subquiver of the form
		\begin{align}\label{diamond} \tag{$\diamond$} 
			\xymatrix{
				& 5 \ar[rd]^{\beta_2} \ar[rr]^{\lambda}& & 6  \ar@{.>}@/^1pc/[rrrd]^{\epsilon_2\cdots\epsilon_p}& &\\
				1   \ar[rr]_{\alpha_1}\ar[ru]^{\beta_1} & &  2 \ar[rr]_{\alpha_2} \ar[ru]^{\epsilon_1}  & &    3  \ar[rr]_{\alpha_3} &  & 4
			}	
		\end{align} 
		with $\omega=(\alpha_1,\alpha_2, \alpha_3)\in AP_3$, $\beta_1\beta_2\neq 0$, $\epsilon_1\cdots\epsilon_p\neq 0$,
		and $l_4(f_1,f_2,f_3,f_4)(\omega)\neq 0$ for some $f_i\in B^1(A)[1], , 1 \leq i \leq 4$.
	\end{theorem}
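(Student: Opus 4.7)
The plan is to refine the proof of Theorem~\ref{teo ln=0} to the case $n=4$, pinpointing the cancellations there that genuinely require $n\ge 5$ and showing that the only configuration which can fail to vanish at $n=4$ is precisely the diamond~$(\diamond)$.

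By Propositions~\ref{prop 1}, \ref{proposition A}, and \ref{proposition B}, together with Remark~\ref{rmk l_4 no 0}, the hypothesis $l_4(f_1,\ldots,f_4)(\omega)\neq 0$ forces $Q$ to contain one of the three subquivers (i), (ii), or (iii) listed at the beginning of Section~\ref{sec 5}. Case (iii) is exactly $(\diamond)$, so it remains to exclude cases (i) and (ii) (understood as not already containing the diamond). By formula~\eqref{l_n}, $l_4(f_1,\ldots,f_4)(\omega)$ is a signed sum of contributions of types~\eqref{eqn1} and~\eqref{eqn1'}: in case (i) only~\eqref{eqn1'} can be nonzero, while in case (ii) only~\eqref{eqn1}.

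For case (ii), I would re-run the analysis of~\eqref{eqn1} from the proof of Theorem~\ref{teo ln=0}. The subterm~\eqref{termino 2} vanishes by Lemma~\ref{lemma:phi=0-RBR} for every $n\ge 4$, and~\eqref{termino 3} vanishes at $n=4$ by the argument already written there. The remaining subterm~\eqref{termino 1} is the delicate one: its nonvanishing requires the existence of a nonzero path $\lambda_1\cdots\lambda_h$ parallel to $\beta_m\gamma_1$, and every sub-case other than $h=1$, $m=2$ is further reduced to a $v_{n-3}=v_1$ expression, which vanishes by definition. The surviving sub-case $h=1$, $m=2$ places a single arrow $\lambda=\lambda_1$ parallel to $\beta_2\gamma_1$ with $v'=\beta_1\beta_2$, and relabeling $\epsilon_j=\gamma_j$ gives precisely~$(\diamond)$.

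For case (i), the parallel analysis of~\eqref{eqn1'} runs as follows: Lemma~\ref{lemma:phi=0-RBR} kills the entire sum over $j\ge 2$ in~\eqref{eq v n-1 a}, and the remaining term reduces, exactly as in Theorem~\ref{teo ln=0}, to a combination of terms of type~\eqref{eq c}. At $n=4$ both branches $c>1$ and $c=1$ are expressed through $v_{n-3}=v_1$, and therefore vanish; hence $l_4=0$ in case (i). The main obstacle I foresee is that the present statement, unlike Theorem~\ref{teo ln=0}, does not exclude subquivers of type~\eqref{caso 2}: when $(\star)$ is present, both~\eqref{eqn1} and~\eqref{eqn1'} contribute simultaneously and must be tracked jointly. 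The saving grace is once more that every refinement that required $n\ge 5$ in Theorem~\ref{teo ln=0} degenerates at $n=4$ to a $v_1$-expression, so that the only surviving configuration is the left-hand diamond described in~\ref{itemtwo} of Proposition~\ref{proposition B}, namely~$(\diamond)$.
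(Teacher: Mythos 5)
Your overall strategy coincides with the paper's: reduce to the three subquivers listed at the start of Section~\ref{sec 5} via Propositions~\ref{prop 1}, \ref{proposition A}, \ref{proposition B}, then specialize the term-by-term analysis from the proof of Theorem~\ref{teo ln=0} to $n=4$ and isolate the surviving configuration as the diamond. The branches you identify and the conclusions you reach in each of them agree with the paper. The problem is the mechanism you invoke for the decisive vanishings: you repeatedly assert that the residual terms at $n=4$ ``are expressed through $v_{n-3}=v_1$ and therefore vanish by definition.'' The identity $\phi_k = H^*v_k$, which is what converts a $\phi$-term into a $v$-term in the proof of Theorem~\ref{teo ln=0}, holds only for $k\ge 2$; for $k=1$ one has $\phi_1=G^*$, which is certainly not zero. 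At $n=4$ the residual terms are exactly $\phi_1$-evaluations, namely
\[
\phi_1(f_j)(1\otimes \beta_1\cdots\beta_{m-1}\otimes f_k(\beta_m\gamma_1)\otimes 1)\,\gamma_2\cdots\gamma_r\alpha_3
\quad\text{and}\quad
\phi_1(f_t)(1\otimes \alpha_1\gamma_1\cdots\gamma_{r-1}\otimes \rho_1\cdots\rho_c\otimes 1)\,\delta_2\cdots\delta_s,
\]
neither of which is a $v_1$-expression; moreover, if the shortcut were valid it would apply just as well to the sub-case $h=1$, $m=2$ that you (correctly) carve out as the survivor, and would prove $l_4\equiv 0$, contradicting the explicit nonvanishing example. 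What actually kills these terms are direct path arguments using the absence of oriented cycles: the \eqref{eq c}-type term is a path starting with $\alpha_1$ and ending with $\delta_s$, and no such nonzero path exists; the \eqref{termino 1}-type term for $m>2$ is a path starting with $\beta_1$ and ending with $\alpha_3$, again impossible; and for $m=2$, $h>1$ it contains the zero product $\lambda_h\gamma_2$. Only after these arguments does $m=2$, $h=1$ remain, which is the diamond. The same repair resolves your worry about subquivers of type~\eqref{caso 2}: the only place their exclusion is used in Theorem~\ref{teo ln=0} is to force $r>2$ in the $c=1$ branch of \eqref{eq c}, and at $n=4$ that term has already died by the path argument.

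A secondary point: the conclusion of the theorem also asserts that $l_4(f_1,f_2,f_3,f_4)(\omega)\neq 0$ for some $f_i$ at the $\omega$ of the diamond. This is inherited from the hypothesis once you know the $\omega$ witnessing nonvanishing sits in a diamond, but you should say so; the paper instead closes the proof by exhibiting explicit cochains $f_1=(\alpha_1\alpha_2||\beta_1\beta_2\alpha_2)$, $f_2=(\alpha_2\alpha_3||\epsilon_1\cdots\epsilon_p)$, $f_3=(\beta_2\epsilon_1||\lambda)$, $f_4=(\beta_1\lambda||\alpha_1\epsilon_1)$ with $l_4(f_1,f_2,f_3,f_4)(\omega)=-\alpha_1\epsilon_1\cdots\epsilon_p\neq 0$ whenever the diamond is present, a sharpness statement your sketch omits entirely.
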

	\begin{proof}
		From Propositions \ref{prop 1}, \ref{proposition A} and \ref{proposition B}, we know that $Q$ contains a subquiver of the form \eqref{diamond}
		with $\epsilon_p\neq\alpha_3$, or a subquiver of the form
		\[ \xymatrix{
			& 1   \ar[r]_{\alpha_1} \ar@{.>}@/^1pc/[r]^{v'} &   2 \ar[r]_{\alpha_2} \ar@{.>}@/^1pc/[r]^{u'}   &  \ar@{.>}@/^1pc/[r]^{w'}  3  \ar[r]_{\alpha_3} & 4
		}	 \]
		where the nonzero paths $v'$ or $w'$ may not appear simultaneously. 
		Assume we are in the second case, with the same notation as in the proof of Theorem \ref{teo ln=0}. 
		So, $l_4(f_1,f_2,f_3,f_4)(\omega)$ is a linear combination of terms of the form \eqref{termino 1} and \eqref{eq c}:
		\begin{align*}
			& v_{2}(f_{i_1}, f_{i_2})(1\otimes\beta_1\cdots \beta_{m-1}\otimes \beta_m\otimes \gamma_1\otimes 1)\gamma_2\cdots \gamma_r\alpha_3, \mbox{ and}  \\
			&\phi_1(f_{t})(1\otimes\alpha_1\gamma_1\cdots\gamma_{r-1}\otimes \rho_1\cdots \rho_c\otimes 1)\delta_2\cdots \delta_s.
		\end{align*}
		The second one is zero since, as $Q$ has no oriented cycles, there is no nonzero path starting with $\alpha_1$ and ending with $\delta_s$.
		The first one vanishes when $m>2$ since there is no nonzero path starting with $\beta_1$ and ending with $\alpha_3$. Assume $m=2$. In this case, 
		the nonvanishing of the term
		\[ \phi_1(f_{j})(1\otimes\beta_1\otimes f_k(\beta_2 \gamma_1)\otimes 1)\gamma_2\cdots \gamma_r\alpha_3\]
		implies the existence of a nonzero path $\lambda_1 \cdots \lambda_h$ parallel to $\beta_2 \gamma_1$, with $\lambda_1 \neq \beta_2$. Since $Q$ has no oriented cycles, this means that $\lambda_h \neq \gamma_1$. Therefore $\lambda_h \gamma_2=0$, and hence the last term can be nonzero only if $h=1$.
		In this case we have a subquiver of the form \eqref{diamond} (with $\epsilon_1 \cdots \epsilon_p=\gamma_1 \cdots \gamma_r \alpha_3$). 
		
		Finally, if $Q$ has a subquiver of the form \eqref{diamond} then, for instance, for
		\begin{align*}
			& f_1=(\alpha_1 \alpha_2|| \beta_1 \beta_2 \alpha_2), & & f_2=(\alpha_2 \alpha_3 || \epsilon_1 \cdots \epsilon_p),  \\ & f_3=(  \beta_2\epsilon_1|| \lambda), \mbox{ and } &  &f_4=(\beta_1 \lambda ||\alpha_1 \epsilon_1),
		\end{align*}
		we have that $l_4(f_1, f_2, f_3, f_4)(\omega) = -\alpha_1 \epsilon_1 \cdots \epsilon_p\neq 0$.
	\end{proof}

	\begin{theorem}\label{teo MC elements}
		Let $A=\kk Q/I$ be a gentle algebra. Assume $Q$ has no oriented cycles, no parallel arrows,
		and no subquivers of the form \eqref{caso 2}.
		Then  $f= \sum_{i=1}^m f_i t^i$ is a Maurer-Cartan element if and only if  $f_i$ is a $2$-cocycle for all $i$.
	\end{theorem}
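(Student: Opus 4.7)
The plan is to reduce the Maurer-Cartan equation \eqref{MC} to the single condition $\delta^2 f_i=0$ for each $i$, using the vanishing of the higher brackets established earlier together with a further vanishing of $l_4$ on 2-cocycles. Under the hypotheses, Lemma~\ref{l_2 =0} yields $l_2\equiv 0$, Lemma~\ref{l_3 =0} yields $l_3\equiv 0$, and Theorem~\ref{teo ln=0} yields $l_n\equiv 0$ for every $n\geq 5$. Substituting into \eqref{MC} collapses the equation to
\[
\delta^2(f_i)\;=\;-\tfrac{1}{24}\sum_{\substack{j_1+\cdots+j_4=i\\ j_k\geq 1}} l_4(f_{j_1},f_{j_2},f_{j_3},f_{j_4}),
\]
with the right-hand side empty when $i\leq 3$.

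The heart of the proof will be the claim that $l_4(g_1,g_2,g_3,g_4)=0$ whenever $g_1,\dots,g_4\in B^1(A)[1]$ are $2$-cocycles. By Theorem~\ref{theo l_4 no 0 parte2} and Remark~\ref{rmk l_4 no 0}, a nonzero value at $\omega=(\alpha_1,\alpha_2,\alpha_3)$ would force a $(\diamond)$-subquiver around $\omega$ and require some $g_k$ with $g_k(\beta_1\lambda)\,\epsilon_2\cdots\epsilon_p\neq 0$. I plan to rule this out as follows. Gentleness at vertex $6$ (applied to $\epsilon_2$ together with $\epsilon_1\epsilon_2\neq 0$) forces $\lambda\epsilon_2\in I$ by condition (c), so $(\beta_1,\lambda,\epsilon_2)\in AP_3$ and the $2$-cocycle identity $\delta^2 g_k=0$ at this triple gives
\[
g_k(\beta_1\lambda)\,\epsilon_2 \;=\; \beta_1\,g_k(\lambda\epsilon_2).
\]
Since the right-hand side is supported only on paths starting with $\beta_1$, comparing coefficients in the path basis of $A$ will kill the coefficient of $\alpha_1\epsilon_1$ in $g_k(\beta_1\lambda)$ (this being the unique nonzero path from $1$ to $6$ starting with $\alpha_1$); a path-by-path inspection of the remaining $\beta_1$-summands, controlled by the no-cycles, no-parallel-arrows and no-$(\star)$-subquiver hypotheses, will then show $g_k(\beta_1\lambda)\,\epsilon_2\cdots\epsilon_p=0$, contradicting Remark~\ref{rmk l_4 no 0}.

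With the claim in hand, the ``if'' direction of the theorem is immediate, since both sides of the simplified MC equation vanish on $2$-cocycles. For the ``only if'' direction I induct on $i$: when $i\leq 3$ the sum is empty, so \eqref{MC} forces $\delta^2 f_i=0$; when $i\geq 4$, each index $j_k$ in the sum satisfies $j_k\leq i-3<i$, so by induction the $f_{j_k}$ are $2$-cocycles, the claim gives $l_4(f_{j_1},\dots,f_{j_4})=0$, and hence $\delta^2 f_i=0$. The main obstacle is the final path-by-path step of the claim, namely ruling out the residual $\beta_1$-contributions to $g_k(\beta_1\lambda)\,\epsilon_2\cdots\epsilon_p$; I expect this to follow from a careful enumeration of nonzero paths from $1$ to $6$ and from $1$ to $t(\epsilon_p)$ allowed in a $(\diamond)$-configuration, using that any $\beta_1$-prefix path must pass through $\beta_2$ and is then constrained by the gentleness relations and by the absence of a $(\star)$-subquiver.
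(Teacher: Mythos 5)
This is essentially the paper's proof: the same reduction to $l_1$ and $l_4$ via Lemmas \ref{l_2 =0} and \ref{l_3 =0} and Theorem \ref{teo ln=0}, the same key claim that $l_4$ vanishes on $2$-cocycles obtained by evaluating $\delta^2 g_k$ at the triple $(\beta_1,\lambda,\epsilon_2)$ and playing it against Remark \ref{rmk l_4 no 0}, and the same induction on the least index $t$ with $f_t$ not a cocycle. The one step you leave open closes more easily than you anticipate and needs neither a path enumeration nor the no-\eqref{caso 2} hypothesis: by gentleness at vertex $6$, $\epsilon_1$ is the only arrow $\gamma$ with $\gamma\epsilon_2\neq 0$, so every summand $w$ of $g_k(\beta_1\lambda)$ that survives right multiplication by $\epsilon_2$ must end in $\epsilon_1$, and such a $w$ cannot begin with $\beta_1$ because $\beta_1\lambda=0=\beta_2\epsilon_1$ and a nonzero path of the form $\beta_1\beta_2\cdots\epsilon_1$ would have to return to vertex $2$, contradicting the absence of oriented cycles. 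Hence $g_k(\beta_1\lambda)\epsilon_2$ is supported on paths not starting with $\beta_1$ while $\beta_1 g_k(\lambda\epsilon_2)$ is supported on paths starting with $\beta_1$, so the cocycle identity forces $g_k(\beta_1\lambda)\epsilon_2=0$ and in particular $g_k(\beta_1\lambda)\epsilon_2\cdots\epsilon_p=0$, which is exactly the contradiction with Remark \ref{rmk l_4 no 0} that you were after.
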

	
	\begin{proof} 
		From Lemmas \ref{l_2 =0} and \ref{l_3 =0}, and Theorem \ref{teo ln=0}, we have that $l_n(f_{i_1}, \ldots, f_{i_n})\neq 0$ only for $n=1,4$. Therefore,
		$f= \sum_{i=1}^m f_i t^i$ is a Maurer-Cartan element if and only if it satisfies the equations \eqref{MC}
		\[\delta^2(f_k)+ {\frac{1}{4!}} \sum_{i_1+i_2+i_3+i_4=k}l_4(f_{i_1}, f_{i_2}, f_{i_3}, f_{i_4})=0\]
		for all $k\geq 1$. If $l_4=0$, the result is clear.
		
		Assume $l_4 (f_{i_1}, f_{i_2}, f_{i_3}, f_{i_4}) \neq 0$. By Theorem \ref{theo l_4 no 0 parte2}, we can affirm that
		$Q$ has a subquiver of the form \eqref{diamond}. 
		We will prove first that if $g_j$ is a $2$-cocycle for all $j$ with $1 \leq j \leq 4$, then $l_4(g_1,g_2,g_3,g_4) = 0$. In fact, 
		$l_4(g_1,g_2,g_3,g_4)\neq 0$ implies that  
		$l_4(g_1,g_2,g_3,g_4)(\omega) \neq 0$ for $\omega=(\alpha_1, \alpha_2, \alpha_3)$ in the subquiver of the form \eqref{diamond}, see Theorem \ref{theo l_4 no 0 parte2}.
		Moreover, by Remark \ref{rmk l_4 no 0}, 
		$g_j(\beta_1\lambda )\delta_2$ is nonzero, for some $j$, and hence $g_j$ is not a $2$-cocycle since 
		\begin{align*}
			\delta^2(g_j)(\beta_1,\lambda,\delta_2)= - g_j(\beta_1\lambda)\delta_2 + \beta_1 g_j(\lambda \delta_2) 
		\end{align*}
		and $g_j(\beta_1\lambda ) \delta_2$ is not a path starting with $\beta_1$ in  \eqref{diamond}.
		
		Now it is clear that if all $f_i$ are $2$-cocycles, then $\sum_{i=1}^mf_i$ is a Maurer-Cartan element. For the converse, suppose that not all $f_i$ are $2$-cocycles, and  let $t$ be such that $f_t$ is not a $2$-cocycle and $f_j$ are $2$-cocycles for all $j$ with $1 \leq j<t$. 
		As we noted above, 
		$l_4(f_{i_1}, f_{i_2}, f_{i_3}, f_{i_4})= 0$
		when $i_1 + i_2 + i_3 + i_4=t$.
		Hence the Maurer-Cartan equation for $k=t$ reduces to  $\delta^2(f_t)=0$, which is a contradiction. 
	\end{proof}
	
	The previous result is not true if the quiver has parallel arrows or oriented cycles, as we can see in the following examples.
	
	\begin{example}
		Let $A=\kk Q/I$ be the algebra with quiver
		\[ \xymatrix{
			& & & 5  \ar[rd]^{\gamma_2}& &\\
			1  \ar@<1ex>[rr]^{\beta} \ar[rr]_{\alpha_1} & &  2 \ar[rr]_{\alpha_2} \ar[ru]^{\gamma_1}  & &    3  \ar[rr]_{\alpha_3} &  & 4,
		}	 \]
		and  $I=\langle\alpha_1\alpha_2, \alpha_2\alpha_3, \beta\gamma_1\rangle$.
		Let $f_1=(\alpha_1\alpha_2||\beta\alpha_2 +\alpha_1\gamma_1\gamma_2) + (\alpha_2\alpha_3||\gamma_1\gamma_2\alpha_3) + (\beta\gamma_1||\alpha_1\gamma_1)$ and $f_3= (\alpha_1\alpha_2||\alpha_1\gamma_1\gamma_2)$.
		One can check that $l_n(g_1,\dots,g_n)(\alpha_1,\alpha_2,\alpha_3)=0$ for all $g_j\in B^1(A)[1]$ and for all $n\neq 1,3$. Moreover, $f_1$ is a $2$-cocycle and $l_3(f_{i_1}, f_{i_2},f_{i_3})(\alpha_1,\alpha_2,\alpha_3)=0$ except for
		$$l_3(f_1, f_1, f_1)(\alpha_1,\alpha_2,\alpha_3)= 6 \alpha_1\gamma_1\gamma_2\alpha_3 \neq 0.$$ 
		As $l_1(f_3)(\alpha_1,\alpha_2,\alpha_3)= \alpha_1\gamma_1\gamma_2\alpha_3 \neq 0$
		we have that $f_3$ is not a $2$-cocycle, and  $\displaystyle l_1(f_3) - \frac{1}{6}l_3(f_1, f_1, f_1)=0$, hence $f=f_1 t+f_3 t^3$ is a Maurer-Cartan element.
	\end{example}

	\begin{example}
		Let $A=\kk Q/I$ be the algebra with quiver
		\[ \xymatrix{
			& 5\ar[rd]^{\beta_2} \ar@/_0.5pc/[ld]_{\lambda} & & 6  \ar[rd]^{\gamma_2}& &\\
			1  \ar@/_0.5pc/[ru]_{\beta_1} \ar[rr]_{\alpha_1} &  &  2 \ar[rr]_{\alpha_2} \ar[ru]^{\gamma_1}  & &    3  \ar[rr]_{\alpha_3} & & 4,
		}	 \]
		and  $I=\langle\alpha_1\alpha_2, \alpha_2\alpha_3, \beta_2\gamma_1, \beta_1\lambda, \lambda\beta_1 \rangle$.
		Let $f_1=(\alpha_1\alpha_2||\beta_1\beta_2\alpha_2 +\alpha_1\gamma_1\gamma_2) + (\alpha_2\alpha_3||\gamma_1\gamma_2\alpha_3) + (\beta_2\gamma_1||\lambda\alpha_1\gamma_1) + (\beta_1\lambda|| e_1) + (\lambda\beta_1|| e_5)$ and $f_4= -(\alpha_1\alpha_2||\alpha_1\gamma_1\gamma_2)$.
		One can check that $l_n(g_1,\dots,g_n)(\omega)=0$ for all $g_j\in B^1(A)[1]$,  for all $\omega\in AP_3$, and for all $n\neq 1,4$. Moreover, $f_1$ is a $2$-cocycle and $l_4(f_{i_1}, f_{i_2},f_{i_3}, f_{i_4})(\omega)=0$ except for
		$$l_4(f_1, f_1, f_1, f_1)(\alpha_1,\alpha_2,\alpha_3)=-24 \alpha_1\gamma_1\gamma_2\alpha_3 \neq 0.$$
		Since $l_1(f_4)(\alpha_1,\alpha_2,\alpha_3)= -\alpha_1\gamma_1\gamma_2\alpha_3 \neq 0$ we have that $f_4$ is not a $2$-cocycle 
		and $f=f_1 t+f_4 t^4$ is a Maurer-Cartan element because $\displaystyle l_1(f_4) - \frac{1}{24}l_4(f_1, f_1, f_1, f_1)=0$.
	\end{example}
	
	\section*{Acknowledgements}
	This work began at the Women in Noncommutative
	Algebra and Representation Theory (WINART3) workshop, held at the Banff International Research Station (BIRS) in April 2022. The authors would like to thank the organizers of WINART3 for this collaboration opportunity.
	M. M\"{u}ller gratefully acknowledges the support of Association for Woman in Mathematics (AWM), through AWM Mathematical Endeavors Revitalization Program (MERP) grant.

	%Bibliografía con bibtex
%	\newcommand*{\doi}[1]{\href{http://dx.doi.org/#1}{doi: #1}}
%	
%	\bibliographystyle{aomalpha}
%	\bibliography{biblio.bib}
%	

\end{document}